\documentclass[11pt,a4wide]{article}
\usepackage{setspace,a4wide}

\usepackage{amsmath}
\usepackage{amsfonts}
\usepackage{graphicx}
\usepackage[dvips]{color}

\usepackage[T1]{fontenc}
\newcounter{assump}
\newenvironment{proof}[1][Proof]{\textit{#1.} }{\ \rule{0.5em}{0.5em}}

\newtheorem{assum}[assump]{Assumption}

\makeatletter
\@addtoreset{equation}{section}

\makeatother

\newcounter{Fig}[figure]

\newcounter{Tab}[table]

  {\refstepcounter{Tab}
   \addtocounter{table}{1}
   \samepage\vspace{0.2cm}
   \centerline{#1} \nobreak
   \begin{verse}{Table \thesection.\arabic{Tab}:}
  }%
  {\end{verse} \vspace{0.2cm}}

\relax

\newcommand{\XX}{\mbox{\boldmath $X$}}
\newcommand{\YY}{\mbox{\boldmath $Y$}}
\newcommand{\ZZ}{\mbox{\boldmath $Z$}}

\newcommand{\ccc}{\mbox{\boldmath $c$}}

\newcommand{\xx}{\mbox{\boldmath $x$}}

\newcommand{\KK}{\mbox{\boldmath $K$}}

\newcommand{\hh}{\mbox{\boldmath $h$}}

\newcommand{\aaa}{\mbox{\boldmath $a$}}

\newcommand{\dd}{\mbox{\boldmath $d$}}
\newcommand{\bb}{\mbox{\boldmath $b$}}
\newcommand{\uu}{\mbox{\boldmath $u$}}
\newcommand{\vv}{\mbox{\boldmath $v$}}
\newcommand{\zb}{\mbox{\boldmath $z$}}
\newcommand{\zz}{\mbox{\boldmath $z$}}

\newcommand{\UU}{\mbox{\boldmath $U$}}

\def \Pb{{\mathbb P}}
\def \Rb{{\mathbb R}}

\def \Ac{{\mathcal A}}
\def \Rc{{\mathcal R}}
\def \Bc{{\mathcal B}}
\def \Gc{{\mathcal G}}
\def \Zc{{\mathcal Z}}

\def \Ec{{\mathcal E}}

\def \Fc{{\mathcal F}}
\def \Cc{{\mathcal C}}

\def \Hc{{\mathcal H}}
\def \Mc{{\mathcal M}}
\def \Nc{{\mathcal N}}
\def \Lc{{\mathcal L}}

\newcommand{\bqa}{\begin{eqnarray*}}
\newcommand{\eqa}{\end{eqnarray*}}
\newcommand{\bqan}{\begin{eqnarray}}
\newcommand{\eqan}{\end{eqnarray}}
\newcommand{\bqt}{\begin{quote}}
\newcommand{\eqt}{\end{quote}}
\newcommand{\bt}{\begin{tabbing}}
\newcommand{\et}{\end{tabbing}}
\newcommand{\bit}{\begin{itemize}}
\newcommand{\eit}{\end{itemize}}
\newcommand{\ben}{\begin{enumerate}}
\newcommand{\een}{\end{enumerate}}
\newcommand{\beq}{\begin{equation}}
\newcommand{\eeq}{\end{equation}}
\newcommand{\bdefi}{\begin{definition}}
\newcommand{\edefi}{\end{definition}}
\newcommand{\bpro}{\begin{proposition}}
\newcommand{\epro}{\end{proposition}}
\newcommand{\blem}{\begin{lemma}}
\newcommand{\elem}{\end{lemma}}
\newcommand{\bth}{\begin{theorem}}
\newcommand{\eth}{\end{theorem}}
\newcommand{\bco}{\begin{corollary}}
\newcommand{\eco}{\end{corollary}}
\newcommand{\bdes}{\begin{description}}
\newcommand{\edes}{\end{description}}
\newcommand{\bre}{\begin{remark}}
\newcommand{\ere}{\end{remark}}

\newtheorem{definition}{Definition}[section]
\newtheorem{proposition}[definition]{Proposition}
\newtheorem{lemma}[definition]{Lemma}
\newtheorem{theorem}[definition]{Theorem}
\newtheorem{corollary}[definition]{Corollary}
\newtheorem{remark}[definition]{\textit{ \textbf{Remark}}}

\newcommand{\ep}{\varepsilon}

\def\mds{\medskip}
\def\1{{\mathbf 1}}

\begin{document}

\title{{\Huge \bf Single-index copulae}}

\medskip

\author{{\sc JEAN-DAVID FERMANIAN} \\ {\it \small Crest-Ensae, 3 avenue Pierre Larousse, 92245 Malakoff cedex, France}
 \\ {\sc OLIVIER LOPEZ} \\ {\it \small Univ. Pierre et Marie Curie Paris VI, 4 place Jussieu, 75005 Paris, France.}}

\date{First version: March 2016. This version: April 2017}

\maketitle

\begin{abstract}
We introduce so-called ``single-index copulae''. They are semi-parametric conditional copulae whose parameter is an unknown ``link'' function of a univariate index only.
We provide estimates of this link function and of the finite dimensional unknown parameter.
The asymptotic properties of the latter estimates are stated. Thanks to some properties of conditional Kendall's tau, we illustrate our technical conditions with several usual copula families.
\medskip
\noindent

\textit{Key words}: Conditional copulae, kernel smoothing, single-index models

\medskip
\noindent
\textit{Running title}: Single-index copulae
\end{abstract}

\pagebreak

\section{Introduction}

\subsection{The framework of single-index dependence functions}

Since Sklar's theorem (1959), copula modeling has emerged as a very active field in theoretical and applied research.
Applications in finance, insurance, biology, medicine, hydrology, etc., are now countless. The origin of this success is the ability of splitting specification/inference/testing of a (complex) multivariate model into two separate (simpler) problems: the management of marginal distributions on one side, and the modelling of the dependence structure (copula) on the other side. See the books of Joe (1997) or Nelsen (1998) for a rigorous presentation of this field.

\mds

In practice, it is usual to introduce explanatory variables (also called ``covariates'') in a multivariate model, particularly in econometrics or financial risk management.
When we focus on the effect of these covariates on the underlying copulae, we need the concept of conditional copulae immediately (Patton, 2006). Conditional copulae are a natural way of linking conditional marginal distributions to get a multivariate conditional law and they have been applied extensively (see the surveys of Patton 2009, 2012). Recently, the rise of vine models (Aas et al., 2009) has extended the scope and the importance of conditional copulae.

\mds

Until now, most conditional copula models were parametric. For instance, they specify a given functional link between the copula parameters and an index $\beta'\zb$, $\zb$ being the underlying vector of covariates: see Rockinger and Jondeau (2006), Patton (2006), Rodriguez (2007), Batram (2007), among others. Alternatively, a fully nonparametric point of view has been proposed by Fermanian and Wegkamp (2012) or Gijbels et al. (2011). Such techniques rely on kernel smoothing, local polynomials or other tools in functional estimation. As a consequence, when the dimension of the vector of covariates of larger than three, such methods suffer from the well-know curse of dimension, and they become unfeasible in practice.

\mds

In this paper, we propose an intermediate solution, through a single-index  assumption on the underlying copula parameter. Therefore, only a finite-dimensional parameter and a univariate ``link'' function have to be estimated, avoiding the curse of dimension.
Note that Acar et al. (2011, 2013) and Abegaz et al. (2012) have proposed another alternative through local linear approximations of the link function between covariates and copula parameters. Nonetheless, the latter approach is based on a linearization (thus approximative) procedure and the number of unknown parameters is increasing very quickly with the dimension of $\zb$.
Moreover, Sabeti et al. (2014) have recently introduced additive models for copula, and have adapted the bayesian-type estimation procedure proposed by Craiu and Sabeti (2012).

\mds

To fix the ideas and the notations, let us consider an i.i.d. sample of observations $(\XX_i,\ZZ_i)$ in
$\Rb^d \times \Rb^p$, that are drawn from the law of $(\XX,\ZZ)$. The vector $\XX$ represents the endogenous vector,
and $\ZZ$ is the vector of covariates. We are interested in the
evaluation of the law of $\XX$ conditional on $\ZZ=\zb$, for arbitrary vectors $\zb$. This
conditional cdf is denoted by $F(\cdot | \zb)$. The (marginal)
law of $X_k$, $k=1,\ldots,d$, given $\ZZ=\zb$, is
denoted by $F_k(\cdot | \zb)$. We introduce the unobserved random
vector $\UU_{\zb}=(U_{1,\zb},\ldots,U_{d,\zb})$, where $U_{k,\zb}=F_k(X_k |\zb)$, $k=1,\ldots,d$.
To simplify notations and when there is no ambiguity, $\UU_{\zb}$ will often be denoted by $\UU$.
By definition, the law of $\UU_{\zb}$ knowing $\ZZ=\zb$ is the conditional
copula of $\XX$ knowing $\ZZ=\zb$, denoted by $C(\cdot | \zb)$.

\mds

First, we live in a parametric framework. A natural model specification would be to assume that, for any
$\uu \in [0,1]^d$ and any $\zb\in \Rb^p$,
$$ C(\uu | \zb ) = C_{\theta(\zb)} (\uu),$$
where $\theta:\Rb^p \rightarrow \Rb^q$ maps the vector of covariates
to  the (true) parameter of the conditional copula knowing
$\ZZ=\zb$, and $\mathcal{C}=\{C_{\theta}:\theta\in \Theta \subset
\Rb^q\}$ denotes a parametric family of copulae. The copula density
of $C_{\theta}$ is supposed to exist and is denoted by $c_\theta$. To simplify, this density
is assumed to be continuous for every $\theta\in\Theta$, and $\Theta$ is a compact subset.

\mds

Second, since the single-index assumption is related to the dependence
function among the components of $\XX$, given the
covariates, this means there exists an unknown
function $\psi$ s.t.
\begin{equation}
\theta(\zb)=\psi (\beta_0,\beta_0'\zb),
\label{single_index_cop}
\end{equation}
where the true parameter $\beta_0\in \Bc$, a compact subset in $\mathbb{R}^m$.
To identify the parameter $\beta_0$, let us assume that the first component of $\beta_0$, that is $\beta_{0,1}$, is equal to one~\footnote{and the estimate of $\beta_{0,1}$
is always one, obviously}.
Note that it is necessary to indicate the dependency of $\theta(\zb)$ on $\beta_0$ explicitly.
Indeed, given $\ZZ=\zb$, we need to know the true parameter value to be able to calculate the index $\theta'_0\zb$ and then to evaluate the underlying conditional copula~\footnote{This is standard in the single-index literature: see Ichimura (1993), for instance, where $E[Y|\ZZ=\zb,\beta_0]$ is a function a real number $h(\beta_0,\zb)$ (with obvious notations).}. Then, under the single-index assumption~(\ref{single_index_cop}), $C(\cdot | \zb)$ does depend on $(\beta,\beta'\zb)$ if the underlying parameter is
$\beta$. Therefore, this function will be denoted equivalently $C_{\beta}(\cdot | \beta'\zb)$ too.

\mds

{\bf\it Example 1:} assume that the conditional copula of $\XX $ given $\ZZ=\zb$ is a $d$-dimensional Gaussian copula $C_\Sigma$. Its associated correlation matrix is
$ \Sigma = \left[  \1(i=j)+ \1(i\neq j) h(\beta_0'\zb)  \right]_{1\leq i,j \leq d},$
 for some unknown function $h:\Rb \longrightarrow (-1/(d-1),1)$.

\mds

We stress that Assumption~(\ref{single_index_cop}) does not mean that $C(\cdot | \zb)$, the conditional copula of $\XX$ knowing $\ZZ=\zb$, is equal to the
conditional copula of $\XX$ knowing $\beta_0'\ZZ=\beta_0'\zb$ (denoted by $\tilde C(\cdot | \beta_0'\zb)$). Indeed, in the former case, the relevant margins are the cdfs' $F_k(\cdot |\zb)$,
$k=1,\ldots,d$, and in the latter case, we need to consider the cdfs' $\tilde F_k(\cdot | \beta_0'\zb): x_k\mapsto P(X_k \leq x_k | \beta_0'\zb)$.
To avoid any confusion, let us denote $\tilde{\UU}_\beta = (\tilde{F}_1(X_1 | \beta'\ZZ),\ldots, \tilde{F}_d(X_d | \beta'\ZZ))$, and $\tilde C(\cdot |\beta'\ZZ=y)$ is the copula of
$\tilde{\UU}_{\beta}$ knowing $\beta'\ZZ=y$. The conditional copulae $C(\cdot | \zb)$ and $\tilde{C}(\cdot | \beta_0'\zb)$ are identical only when
$\ZZ$ provides the same information as $\beta_0'\ZZ$ to explain every margin $X_k$, i.e. when $F_k(\cdot |\zb)=\tilde F_k(\cdot | \beta_0'\zb)$ a.e. for every $k$: see Fermanian and Wegkamp (2012) for a discussion.

\mds

Our single-index copula assumption~(\ref{single_index_cop}) is relevant theoretically and empirically. Indeed, in general, covariates have a different influence on the conditional margins and on the underlying conditional copula. See Acar et al. (2012) for a discussion, and empirical illustrations.
Mainly for practical reasons, some authors assume even more that the conditional copula of $\XX$ given $\ZZ=\zb$ does not depend on $\zb$.
This is the so-called ``simplifying assumption'' (Derumigny and Fermanian, 2017, e.g.), commonly used in vine models.

\mds

\subsection{The M-estimate criterion}

Single-index models are well-known in the world of semiparametric statistics. The theory of M-estimators has started with the seminal papers of Klein and Spady (1993) in the case of the so-called binary response model, and Ichimura (1993) for the general single-index regression model. Sherman (1994), Delecroix and Hristache (1999) extended this approach. H\"ardle et al. (1993) and Delecroix et al. (1999) discussed the choice of the bandwidth for the nonparametric estimation of the link function.
Alternatively, the so-called average derivative method has been developed in parallel by Stoker (1986), Powell, Stock and Stoker (1989), H\"ardle and Stoker (1989), among others.

\mds

In this paper, we rely on M-estimators of single-index models, but related to the parameter of the underlying copula only. If we
were able to observe a sample of the random vector $\UU$, i.e. $\UU_i$, $i=1,\ldots,n$,
then our ``naive'' estimator of $\beta_0$ could be
$$\hat{\beta}_{naive}=\arg \max_{\beta\in \mathcal{B}}\sum_{i=1}^n \ln
c_{\hat\psi (\beta,\beta'\zb_i)}(\UU_i),$$ for some function $\hat\psi$ that estimates $\psi(\cdot,\cdot)$ consistently.

\mds

Since we do not observe realizations of $\UU$, we have to replace
the unknown vectors $\UU_i$ by some estimates $\hat \UU_i$,
given $\ZZ_i$, providing a so-called pseudo-sample
$\hat\UU_1,\ldots,\hat\UU_n$. Then, a natural idea is to define our estimator by
\beq
\hat{\beta}=\arg \max_{\beta\in \mathcal{B}}\sum_{i=1}^n \hat\omega_{i,n}\ln c_{\hat\psi (\beta,\beta'\ZZ_i)}(\hat\UU_{i}),
\label{true_criterion}
\eeq
for some sequence of trimming functions $\hat\omega_{i,n}$. Typically, they are of the type
$\hat\omega_{i,n}=\1(\hat\UU_i \in\mathcal{E}_n, \ZZ_i \in \Zc)$, for some non decreasing sequence of subsets $\mathcal{E}_n$ in $[0,1]^d$, and some $\Zc \subset \Rb^p$.
Such trimming functions allow to control some boundary effects and the uniform convergence of our kernel estimates.
For technical reasons, we choose strictly increasing trimmings on the $\UU$-side, i.e. $\cup_n\mathcal{E}_n=(0,1)^d$. This choice makes it necessary to control explicitly the behavior of $\UU$ close to the boundary of $[0,1]^d$.
This pretty delicate task requires several regularity assumptions but the problem has already been met in the literature (see Tsukahara 2005, for instance).
Moreover, we set a fixed trimming for $\Zc$ (i.e. $\Zc \subset \Rb^p$ strictly). This will not create any bias, because the law of the $\UU$ knowing $\ZZ\in \Zc$, is just $c_{\psi(\beta_0,\beta_0'\zb)}(\uu ) \1(\zb\in \Zc)/\Pb (\ZZ\in \Zc)$. Thus, this law depends on the true parameter $\beta_0$. See Assumption~\ref{A_trim}.

\mds

\bre
Actually, fixed trimming functions for $\hat\UU_i$ could be chosen instead, i.e. $\Ec_n=\Ec \subset [a,1-a]^d$ for some $a>0$ and every $n$. They would induce consistent estimates without having to impose regularity conditions on the copula density close to the frontier of $[0,1]^d$. But the asymptotic behavior of $\hat \beta$ would be more complex. Typically, it would be asymptotically normal, but after removing
 an annoying bias that cannot be evaluated easily. Moreover, beside a small loss of efficiency, this would forbid to model the tail dependence behaviors, a feature that is important in a lot of fields. That is why we have chosen $\hat\beta$, as defined by~(\ref{true_criterion}).
\ere

\mds

\section{Consistency}
\label{consistency}

\subsection{The convergence of single-index estimators}

\begin{assum}
\label{A_trim}
Let us set $\Zc:=[-M_z,M_z]^p$ and $\Ec_n=[\nu_n,1-\nu_n]^d$ for some positive sequence $(\nu_n)$, $\nu_n \in(0,1/2)$, $\nu_n\rightarrow 0$.
The trimming functions are
$\omega_n:[0,1]^d \times \Rb^p \rightarrow [0,1]$, $(\uu,\zb)\mapsto \1(\uu\in \Ec_n,\zb\in \Zc)$.
\end{assum}

We set $\hat\omega_{i,n} = \omega_n(\hat\UU_i,\ZZ_i)$ simply.
For the sake of completeness, we introduce $\omega_{i,n}:=\omega_n(\UU_i,\ZZ_i)$, the trimming function when $\UU_i$ is known, and $\omega_i=\omega_{i,\infty} = \1(\ZZ_i \in \Zc)$.
Typically, the constant $M_z$ is chosen so that the density of $\ZZ$ wrt the Lebesgue measure exists and is lower bounded, i.e. $\inf_{\zz\in \Zc} f_{\ZZ}(\zz)\geq f_0>0$. This will be assumed hereafter, even if this is not mandatory at this stage.

\begin{assum}
\label{A_identif}
The parameter $\beta_0$ is identifiable, i.e. two different parameters induce two different laws of $\UU_{\ZZ}$, knowing $\ZZ\in\Zc$.
The function $M:\Bc \rightarrow \Rb$, $\beta \mapsto E[\ln c_{\psi(\beta,\beta'\ZZ)}(\UU_{\ZZ})\, | \, \ZZ \in \Zc] $ is continuous and uniquely maximized at $\beta=\beta_0$.
There exists a measurable function $h$ and $\alpha >1$ s.t., for every $\zb\in \Zc$,
 \beq
\sup_{\beta\in\Bc} |\ln c_{\psi(\beta,\beta'\zb)}(\UU_{\zb})| \leq
h(\UU_{\zb},\zb),\;\text{with}\,\; E[h^{\alpha}(\UU_{\ZZ},\ZZ).\1(\ZZ\in\Zc)]<\infty.
\label{A_UnifLLN}
\eeq
\end{assum}

The latter assumption is usual for maximum likelihood estimation purpose.
The limiting objective function is here
$$ M(\beta) := E\left[\ln c_{\psi(\beta,\beta'\zb_i)}(\UU_{i}) \,| \, \ZZ_i\in \Zc \right],$$
that is maximized at $\beta=\beta_0$. Obviously, all expectations $E[\cdot]$ have to be understood wrt the true law of $(\XX,\ZZ)$ that depends on $\beta_0$ only.
Note that, due to our trimming functions, we are dealing with a M-estimator of $\beta$ instead of a usual MLE formally, at the cost of a (small) loss of efficiency.

\begin{assum}
\label{A_unifapprox}
\beq
\sup_{\zb\in\Zc}\sup_{\beta \in \Bc}\left|\hat{\psi}(\beta,\beta'\zb)-\psi(\beta,\beta'\zb)\right| = o_P(1).
\label{unif_psi}
\eeq
Moreover, the pseudo-observations $\hat U_{i,k}$ belong to $(0,1)$, $k=1,\ldots,d$, $i=1,\ldots,n$, and there exists a deterministic sequence $(\delta_n)$, $\delta_n = o(\nu_n)$, s.t.
\beq
\lim\sup_{n\rightarrow \infty}\sup_{i} | \hat\UU_i - \UU_i |.\1(\ZZ_i\in \Zc)/\delta_n =1 \;\; \text{a.e.}
\label{unif_uu}
\eeq
\end{assum}

In particular,~(\ref{unif_uu}) implies that $\sup_{i} | \hat\UU_i - \UU_i |.\1(\ZZ_i\in \Zc)=O_P(\delta_n) $.
These assumptions have to be checked for any particular single-index model (see the assumptions (A1) and (A2) in Subsection~\ref{psi_mgt})
and for any particular estimate of the marginal cdfs'.

\mds

Now, we recall the definition of reproducing u-shaped functions, as introduced in Tsukahara (2005).
\begin{definition}

\begin{itemize}
\item A function $f:(0,1)\rightarrow (0,\infty)$ is called u-shaped if it is symmetric about $1/2$ and decreasing on $(0,1/2]$.
\item For $\beta\in (0,1)$ and a u-shaped function $r$, define
$$
r_\beta(u)= \left\{
\begin{array}{lcc}
r(\beta u) & \text{if} & 0<u\leq 1/2; \\
r(1-\beta(1-u)) & \text{if} &  1/2 < u \leq 1.
\end{array}
\right.$$
If, for every $\beta > 0$ in a neighborhood of $0$, there exists a constant $M_\beta$ such that $r_\beta < M_\beta\, r$
on $(0, 1)$, then $r$ is called a reproducing u-shaped function.
\item
We denote by $\Rc$ the set of univariate
reproducing u-shaped functions. The set $\Rc_d$ is the set of functions $r:(0,1)^d\rightarrow \Rb^+$, $r(\uu)=\prod_{k=1}^d r_k(u_k)$, and $r_k \in \Rc$ for every $k$. Moreover, $r_\beta(\uu)=\prod_{k=1}^d r_{k,\beta}(u_k)$.
\end{itemize}
\end{definition}

Typically, the usual functions in $\Rc$ are of the type $r(u)= C_r u^{-a}(1-u)^{-a},$ for some positive constants $a$ and $C_r$.

\begin{assum} There exist some functions $r$, $\tilde r_1,\ldots,\tilde r_d$ in $\Rc_d$ s.t., for every $\uu\in (0,1)^d$,
$$\sup_{\theta\in \Theta}\left| \nabla_{\theta}\ln c_{\theta}(\uu) \right| \leq r(\uu),\;E\left[ r(\UU_{\ZZ}) \1(\ZZ\in \Zc)\right]<\infty,$$
$$ \sup_{\theta\in \Theta}\left| \partial_{u_k}\ln c_{\theta}(\uu) \right| \leq \tilde r_k(\uu),\;\text{for every } k=1,\ldots,d,\; \text{and}$$
$$ \sup_{k=1,\ldots,d}E\left[U_k(1-U_k) \tilde r_k(\UU_{\ZZ})\1(\ZZ\in \Zc) \right] <\infty.$$
\label{A_moments}
\end{assum}
The latter conditions of moments are easily satisfied for most copula models. They are close to those of Assumption (A.1) in Tsukahara (2005).

\begin{theorem}
\label{Th_consistency}
Under the assumptions~\ref{A_trim}-\ref{A_moments}, the estimator $\hat\beta$ given by~(\ref{true_criterion}) tends to $\beta_0$ in probability, when $n$ tends to infinity.
\end{theorem}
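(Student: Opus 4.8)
The plan is to prove consistency of $\hat\beta$ by the classical M-estimator route: establish a uniform law of large numbers (ULLN) for the empirical criterion, show that the limit is the population criterion $M$, and then invoke the identifiability of $\beta_0$ as its unique maximizer. The complication over the textbook argument is that the criterion in (\ref{true_criterion}) is evaluated at the pseudo-observations $\hat\UU_i$ and uses the estimated link $\hat\psi$ and data-dependent trimming $\hat\omega_{i,n}$, rather than at $\UU_i$, $\psi$ and $\omega_i$. So the real work is to control three layers of approximation and then reduce to an i.i.d.\ ULLN.

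First I would set
$$
M_n(\beta) := \frac1n \sum_{i=1}^n \hat\omega_{i,n}\,\ln c_{\hat\psi(\beta,\beta'\ZZ_i)}(\hat\UU_i),
\qquad
\bar M_n(\beta) := \frac1n \sum_{i=1}^n \omega_i \,\ln c_{\psi(\beta,\beta'\ZZ_i)}(\UU_i),
$$
and show $\sup_{\beta\in\Bc}|M_n(\beta)-\bar M_n(\beta)| = o_P(1)$. This is done by a telescoping decomposition: (a) replace $\hat\psi$ by $\psi$, using Assumption~\ref{A_unifapprox}, (\ref{unif_psi}), together with continuity of $\theta\mapsto\ln c_\theta$ and the fact that on the trimmed region the arguments stay in a compact set where $\ln c_\theta$ is uniformly continuous; (b) replace $\hat\UU_i$ by $\UU_i$, using (\ref{unif_uu}) which gives $\sup_i|\hat\UU_i-\UU_i|\1(\ZZ_i\in\Zc)=O_P(\delta_n)$ with $\delta_n=o(\nu_n)$, controlled via Assumption~\ref{A_moments}: the bound $\sup_\theta|\partial_{u_k}\ln c_\theta(\uu)|\le\tilde r_k(\uu)$ and the moment condition on $U_k(1-U_k)\tilde r_k(\UU)$ let me bound the increments even near the boundary of $[0,1]^d$, which is precisely why the reproducing u-shaped machinery and the choice $\delta_n=o(\nu_n)$ are needed; (c) replace $\hat\omega_{i,n}$ by $\omega_{i,n}$ and then $\omega_{i,n}$ by $\omega_i$, noting that the symmetric-difference of the trimming sets has $\UU$-probability tending to $0$ (since $\cup_n\Ec_n=(0,1)^d$) and that on that thin shell the integrand is dominated by the integrable envelope $h$ from (\ref{A_UnifLLN}), so its contribution vanishes; the discrepancy between $\hat\omega_{i,n}$ and $\omega_{i,n}$ is handled jointly with step (b) because $\delta_n=o(\nu_n)$ forces $\hat\UU_i$ and $\UU_i$ to lie on the same side of the trimming boundary except on a negligible set.

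Second, I would apply a uniform law of large numbers to $\bar M_n(\beta)$. Here the summands $\omega_i\ln c_{\psi(\beta,\beta'\ZZ_i)}(\UU_i)$ are i.i.d.\ across $i$, the parameter set $\Bc$ is compact, $\beta\mapsto\ln c_{\psi(\beta,\beta'\zb)}(\uu)$ is continuous (from continuity of $c_\theta$ and, implicitly, of $\psi$), and by (\ref{A_UnifLLN}) there is an integrable envelope $h$ with $E[h^\alpha\1(\ZZ\in\Zc)]<\infty$ for some $\alpha>1$. The standard Jennrich-type ULLN then gives $\sup_{\beta\in\Bc}|\bar M_n(\beta)-P(\ZZ\in\Zc)\,M(\beta)|=o_P(1)$, where $M(\beta)=E[\ln c_{\psi(\beta,\beta'\ZZ)}(\UU_\ZZ)\mid \ZZ\in\Zc]$ as in Assumption~\ref{A_identif}. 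Combining with the first step, $\sup_\beta|M_n(\beta)-P(\ZZ\in\Zc)M(\beta)|=o_P(1)$, and since $P(\ZZ\in\Zc)>0$ this is a uniform approximation of $M$ up to a positive constant, which does not affect the argmax.

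Finally, consistency follows from the standard argmax continuity argument: $\hat\beta$ maximizes $M_n$, $\beta_0$ is the unique maximizer of $M$ over the compact set $\Bc$ (Assumption~\ref{A_identif}, which in turn rests on identifiability of $\beta_0$ from the law of $\UU_\ZZ$ given $\ZZ\in\Zc$), and $M$ is continuous; hence for any neighborhood $V$ of $\beta_0$, $\sup_{\beta\notin V}M(\beta)<M(\beta_0)$, and uniform convergence forces $\hat\beta\in V$ with probability tending to one. I expect the main obstacle to be step (b)–(c): making the replacement of $\hat\UU_i$ by $\UU_i$ rigorous uniformly in $\beta$ while the pseudo-observations may approach $0$ or $1$, which is exactly where the interplay between the rate $\delta_n$, the trimming level $\nu_n$, and the reproducing u-shaped bounds of Assumption~\ref{A_moments} must be used carefully; everything else is routine once that control is in hand.
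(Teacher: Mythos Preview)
Your overall approach is essentially the same as the paper's: telescope from $(\hat\psi,\hat\UU_i,\hat\omega_{i,n})$ to $(\psi,\UU_i,\omega_i)$, then apply a ULLN (the paper cites Newey--McFadden, Lemma 2.4) and the argmax argument. The paper's intermediate criteria $M_n^*$ and $M_n^{**}$ implement exactly your steps (a)--(c).

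There is, however, a genuine gap in your step (a). You propose to control the replacement $\hat\psi\to\psi$ via ``uniform continuity of $\ln c_\theta$ on the trimmed region, a compact set.'' But that compact set is $\Ec_n=[\nu_n,1-\nu_n]^d$, which \emph{expands} to $(0,1)^d$ as $n\to\infty$. Uniform continuity on $\Ec_n\times\Theta$ only gives a modulus of continuity that may deteriorate with $n$; since Assumption~\ref{A_unifapprox} provides $\sup|\hat\psi-\psi|=o_P(1)$ with no rate, you cannot conclude that the increment vanishes. Concretely, $\sup_{\uu\in\Ec_n,\theta}|\nabla_\theta\ln c_\theta(\uu)|\le\sup_{\uu\in\Ec_n}r(\uu)$ may diverge as $\nu_n\to 0$, so the naive bound $\sup_{\uu\in\Ec_n}r(\uu)\cdot o_P(1)$ is useless.

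The paper circumvents this by averaging before taking the supremum: it bounds the relevant term by
\[
\Bigl(\frac{1}{n}\sum_{i=1}^n \hat\omega_{i,n}\,r(\hat\UU_i)\Bigr)\cdot\sup_{\beta,\zb\in\Zc}\bigl|\hat\psi(\beta,\beta'\zb)-\psi(\beta,\beta'\zb)\bigr|,
\]
and shows the empirical average in parentheses is $O_P(1)$ using the \emph{first} half of Assumption~\ref{A_moments} (the bound $\sup_\theta|\nabla_\theta\ln c_\theta|\le r$ with $E[r(\UU_{\ZZ})\1(\ZZ\in\Zc)]<\infty$) together with the reproducing property, which yields $r(\hat\UU_i)\le r_{1/2}(\UU_i)\le M_{1/2}^d\, r(\UU_i)$ on the event $\{|\hat\UU_i-\UU_i|\le 2\delta_n,\ \hat\omega_{i,n}=1\}$. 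You already invoke exactly this machinery for step (b); you need it for step (a) as well. Once you make that substitution, your proposal matches the paper's proof.
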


\begin{proof}
For inference purpose and a given sample, the sample size that we use is actually $\hat n_i=\sum_{i=1}^n \hat\omega_{i,n}$. This random number is close to $n_i=\sum_{i=1}^n \omega_{i,n}$, the sample size if the $\UU_i$ were observable.
Let us introduce
\begin{eqnarray*}
M_n(\beta) &:=&
\frac{1}{n_i+1}\sum_{i=1}^n \hat\omega_{i,n}\ln c_{\hat\psi (\beta,\beta'\zb_i)}(\hat\UU_{i}),\\
M^*_n(\beta) &:=& \frac{1}{n_i+1}\sum_{i=1}^n  \hat\omega_{i,n}\ln c_{\psi(\beta,\beta'\zb_i)}(\UU_{i}),\\
M^{**}_n(\beta) &:=& \frac{1}{n_i+1}\sum_{i=1}^n  \omega_{i}\ln c_{\psi(\beta,\beta'\zb_i)}(\UU_{i}).
\end{eqnarray*}
Note that $\hat \beta$ is the optimizer of $M_n(\cdot)$ because neither $n_i$ or $\hat n_i$ is a function of the underlying parameter $\beta$.
By assumption, $\beta_0$ maximizes $M(\beta)$ over $\Bc$. To prove the consistency of $\hat\beta$, it is sufficient to show
that $\sup_{\beta\in \mathcal{B}}|M_n(\beta)-M(\beta)|=o_P(1)$.

\mds

We first show that $\sup_{\beta\in
\mathcal{B}}|M_n(\beta)-M_n^*(\beta)|=o_P(1).$
Simple calculations provide
\bqa
| M_n(\beta) - M^*_n(\beta) | & \leq & \frac{1}{ n_i+1}\sum_{i=1}^n \hat\omega_{i,n} \sup_{\theta\in \Theta}\left|
 \frac{\nabla_{\theta} c_{\theta}(\hat\UU_i)}{c_{\theta}(\hat\UU_i)}\right|
 \cdot\left|\hat{\psi}(\beta,\beta'\ZZ_i)-\psi(\beta,\beta'\ZZ_i)\right| \\
&+& \frac{1}{n_i+1}\sum_{i=1}^n \left| \hat\omega_{i,n}\frac{\nabla_{\uu} c_{\psi
(\beta,\beta'\zb_i)}}{c_{\psi (\beta,\beta'\zb_i)}}(\UU_i^*)\cdot
(\hat\UU_{i}- \UU_i) \right| :=T_1(\beta) + T_2(\beta), \eqa
for some vectors $\UU_i^*$ s.t. $|\UU_i - \UU_i^*|\leq |\UU_i - \hat\UU_i|$ for all $i$.

Let us deal with $T_1(\beta)$.
By~(\ref{unif_psi}), $\sup_\beta\sup_i\left|\hat{\psi}(\beta,\beta'\ZZ_i)-\psi(\beta,\beta'\ZZ_i)\right| = o_P(1)$. Then, it suffices to prove that
$$ \frac{1}{ n_i+1}\sum_{i=1}^n \hat\omega_{i,n} \sup_{\theta\in \Theta}\left|
 \frac{\nabla_{\theta} c_{\theta}(\hat\UU_i)}{c_{\theta}(\hat\UU_i)}\right|=O_P(1).$$
For every $\ep>0$ and $A>0$, we have
\bqa
\lefteqn{ \Pb\left( \frac{1}{ n}\sum_{i=1}^n \hat\omega_{i,n} \sup_{\theta\in \Theta}\left|
 \frac{\nabla_{\theta} c_{\theta}(\hat\UU_i)}{c_{\theta}(\hat\UU_i)}\right|  > A \right) \leq
 \Pb\left( \sup_i|\hat \UU_i - \UU_i| > 2\delta_n,\ZZ_i\in \Zc \right)   }\\
&+ & \Pb\left( \frac{1}{ n}\sum_{i=1}^n \hat\omega_{i,n} \1(|\hat \UU_i - \UU_i| \leq 2\delta_n).\sup_{\theta\in \Theta}\left|
 \frac{\nabla_{\theta} c_{\theta}(\hat\UU_i)}{c_{\theta}(\hat\UU_i)}\right|  > A \right) .\eqa
By~(\ref{unif_uu}), the first term on the r.h.s. above is less than $\ep$ when $n$ is large.
To manage the last term on the r.h.s., consider an arbitrary index $i$ s.t. $|\hat\UU_i - \UU_i| \leq 2\delta_n$ and $\hat\omega_{in}=1$.
Since $\delta_n=o(\nu_n)$, we can assume that, for every $k=1,\ldots,d$, we have
$$   U_{i,k}/2 \leq \hat U_{i,k}  \; \text{ if} \;\; \hat U_{i,k}\leq 1/2,\;\text{and} $$
$$   (1-U_{i,k})/2 \leq (1-\hat U_{i,k})  \;\; \text{ if} \; \hat U_{i,k}> 1/2. $$
For the $k$-th of the u-shaped functions $r_k$ that define $r$, we deduce
$$  r_k(\hat U_{i,k}) \leq r_k(U_{i,k}/2) \;\; \text{ if} \; \hat U_{i,k}\leq 1/2,\;\text{and} $$
$$  r_k(\hat U_{i,k}) \leq r_k(1 - (1- U_{i,k})/2) \;\; \text{ if} \; \hat U_{i,k}> 1/2. $$
In other words, $ r_{k}(\hat U_{i,k}) \leq r_{k,1/2}(U_{i,k})$ for such $i$ and every $k$.
Then, Assumption~\ref{A_moments} implies
\bqa
\lefteqn{ \frac{1}{n}\sum_{i=1}^n \sup_{\theta\in \Theta}\left|
 \frac{\nabla_{\theta}c_{\theta}(\hat\UU_i)}{c_{\theta}(\hat\UU_i)}\right|\hat\omega_{i,n} \1(|\hat\UU_i - \UU_i|\leq 2\delta_n)  \leq
\frac{1}{n}\sum_{i=1}^n r(\hat\UU_i) \hat\omega_{i,n} \1(|\hat\UU_i - \UU_i|\leq 2\delta_n) }\\
& \leq & \frac{1}{n}\sum_{i=1}^n r_{1/2}(\UU_i)\omega_i \leq \frac{M^d_{1/2}}{n}\sum_{i=1}^n r(\UU_i)\omega_i,   \hspace{8cm}
\eqa
which is integrable. We get
$$\Pb\left( \frac{1}{ n}\sum_{i=1}^n \hat\omega_{i,n} \sup_{\theta\in \Theta}\left|
 \frac{\nabla_{\theta} c_{\theta}(\hat\UU_i)}{c_{\theta}(\hat\UU_i)}\right|  > A \right)
\leq  \ep + \frac{ M^d_{1/2} E[r(\UU_i)\omega_i]  }{A}  < 2\ep,$$
when $A$ and $n$ are sufficiently large.
Since $n_i/n$ tends to a positive constant a.e., we deduce $\sup_\beta T_1(\beta)=o_P(1)$.

\mds

By a slightly more subtle reasoning, we can obtain $\sup_\beta T_2=o_P(1)$.
For every $\ep>0$,
\bqan
\lefteqn{ \Pb(T_2(\beta) > \ep) \leq  \Pb(\sup_i |  \hat\UU_{i}- \UU_i | > 2\delta_n,\ZZ_i\in\Zc) \label{ineg_T2beta} }\\
&+&  \Pb\left( \frac{1}{n_i+1}\sum_{i=1}^n  \hat\omega_{i,n}  \1(|\hat\UU_{i}- \UU_i|\leq 2\delta_n) \sup_\beta\left| \frac{\nabla_{\uu} c_{\psi
(\beta,\beta'\zb_i)}}{c_{\psi (\beta,\beta'\zb_i)}}(\UU_i^*).(\hat\UU_i - \UU_i) \right|   > \ep \right),
\nonumber
\eqan
and the first term on the r.h.s. is smaller than $\ep$ when $n$ is large.
Due to Assumption~\ref{A_moments} and for every $\varepsilon>0$, there exists $\eta \in (0,1/2)$ s.t.
$$\sup_{k=1,\ldots,d} E[\tilde r_{k,1/2}(\UU_{\ZZ}) \{U_k\1(U_k < \eta) + (1-U_k)\1(U_k > 1- \eta)\}.\1(\ZZ\in\Zc)] < \varepsilon^2.$$
By applying the Markov's inequality, we deduce
\bqa
\lefteqn{
\Pb\left( \frac{1}{n}\sum_{i=1}^n  \hat\omega_{i,n}  \1(|\hat\UU_{i}- \UU_i|\leq 2\delta_n) \sup_\beta\left| \frac{\nabla_{\uu} c_{\psi
(\beta,\beta'\zb_i)}}{c_{\psi (\beta,\beta'\zb_i)}}(\UU_i^*).(\hat\UU_i - \UU_i) \right|   > \ep \right)  }\\
&\leq & \Pb\left(
 \frac{1}{n}\sum_{k=1}^d \sum_{i=1}^n \hat\omega_{i,n} \1(|\hat\UU_i - \UU_i| \leq 2\delta_n)
\left|  \tilde{r}_k( \UU_i^*) \right| \cdot | \hat U_{i,k}- U_{i,k} | >\ep \right)  \\
& \leq & \Pb\left( \frac{1}{n}\sum_{k=1}^d \sum_{i=1}^n \omega_{i} \1(|\hat\UU_i - \UU_i| \leq 2\delta_n)
 \tilde{r}_{k,1/2}( \UU_i) \right. \\
&\cdot & \left.
\left\{ | \hat U_{i,k}- U_{i,k} |\1\{  \eta \leq U_{i,k} \leq 1- \eta\}
+ U_{i,k}\1(U_{i,k} < \eta) + (1-U_{i,k})\1(U_{i,k} > 1- \eta)    \right\} >\ep \right)\\
&\leq & \Pb\left(
\frac{2\delta_n}{n}\sum_{k=1}^d \sum_{i=1}^n \omega_{i}
 \tilde{r}_{k,1/2}( \UU_i) \cdot \1\{  \eta \leq U_{i,k} \leq 1- \eta\}>\ep/2 \right)
+ \frac{2\varepsilon^2}{\ep} \\
&\leq & \Pb\left(
\frac{2\delta_n}{n\eta}\sum_{k=1}^d \sum_{i=1}^n \omega_i U_{i,k}(1-U_{i,k})
  \tilde{r}_{k,1/2}( \UU_i)  > \ep/2\right)
+ 2\ep ,
\eqa
and then
$$ \Pb(T_2(\beta) > \ep) \leq  3\ep + \frac{4d\delta_n \sup_k E[\omega_{i} U_{i,k}(1-U_{i,k})
  \tilde{r}_{k,1/2}( \UU_i) ]}{\eta\ep},$$
that is less than $4\ep$ when $n$ is sufficiently large, because of Assumption~\ref{A_moments}.
Note that we have used $\hat U_{i,k}\in (0,1)$ for every $i=1,\ldots,n$ and $k=1,\ldots,d$ above.
Since $\varepsilon$ may be arbitrarily small and $n_i/n$ tends to a constant a.e., we get $\sup_\beta T_2(\beta)=o_P(1)$, and we have proved that $\sup_{\beta\in \mathcal{B}}|M_n(\beta)-M_n^*(\beta)|=o_P(1)$.

\mds

Second, due to Assumption~\ref{A_identif} and for every $\ep>0$, we have
\bqan
\lefteqn{\Pb \left( \sup_\beta|\frac{1}{n}\sum_{i=1}^n  (\hat\omega_{i,n}-\omega_{i,n})\ln c_{\psi(\beta,\beta'\zb_i)}(\UU_{i}) | > \ep \right) \nonumber }\\
& \leq  &
\Pb \left( \frac{1}{n}\sum_{i=1}^n  \1(\UU_i\in\mathcal{E}_n,\hat\UU_i\not\in\mathcal{E}_n, \ZZ_i\in \Zc ) \sup_\beta|\ln c_{\psi(\beta,\beta'\zb_i)}(\UU_{i}) | > \ep /2 \right)  \nonumber\\
& +  &
\Pb \left( \frac{1}{n}\sum_{i=1}^n  \1(\UU_i\not\in\mathcal{E}_n,\hat\UU_i \in\mathcal{E}_n,\ZZ_i\in \Zc) \sup_\beta|\ln c_{\psi(\beta,\beta'\zb_i)}(\UU_{i}) | > \ep /2 \right) \nonumber\\
& \leq & \frac{2}{\ep} E\left[ \left\{ \1(\UU_i\in\mathcal{E}_n,\hat\UU_i\not\in\mathcal{E}_n )
+ \1(\UU_i\not\in\mathcal{E}_n,\hat\UU_i\in\mathcal{E}_n ) \right\}\cdot \1(\ZZ_i\in \Zc)
h(\UU_{i},\ZZ_i)     \right]\nonumber\\
& \leq &
\frac{4}{\ep} E\left[  \1( |\hat\UU_i - \UU_i|>2\delta_n,\ZZ_i\in \Zc)
h(\UU_{i},\ZZ_i) \right] \label{consis_omeg} \\
&+&
\frac{2}{\ep} E\left[ \left\{ \1(\UU_i\in\mathcal{E}_n,\hat\UU_i\not\in\mathcal{E}_n )
+ \1(\UU_i\not\in\mathcal{E}_n,\hat\UU_i\in\mathcal{E}_n ) \right\} \1(|\hat\UU_i - \UU_i|\leq 2\delta_n, \ZZ_i\in \Zc)
h(\UU_{i},\ZZ_i) \right]. \nonumber
\eqan
But we have for any $i$
\bqa
\lefteqn{ \1(|\hat\UU_i - \UU_i|\leq 2 \delta_n)\cdot \{\1(\UU_i \not\in \mathcal{E}_n,\hat\UU_i \in \mathcal{E}_n) + \1(\UU_i \in \mathcal{E}_n,\hat\UU_i \not\in \mathcal{E}_n) \} }\\
& \leq & 2\sum_{k=1}^d \left\{ \1(U_{i,k} \in [\nu_n-2\delta_n,\nu_n+2\delta_n])
+ \1(1-U_{i,k} \in [\nu_n-2\delta_n,\nu_n+2\delta_n]) \right\},
\eqa
that tends to zero a.e. when $n$ tends to infinity. Invoking the dominated convergence Theorem and~(\ref{A_UnifLLN}), this proves that the second term of the r.h.s. in~(\ref{consis_omeg}) is less than $\ep$ when $n$ is large enough.

Moreover, due to Assumption~\ref{A_identif} and H\"older's inequality,
\bqa
\lefteqn{  E\left[  \1( |\hat\UU_i - \UU_i|>2\delta_n,\ZZ_i\in \Zc)
h(\UU_{i},\ZZ_i)  \right] }\\
& \leq &
E\left[h(\UU,\ZZ)^\alpha .\1(\ZZ\in \Zc)\right]^{1/\alpha}
\Pb\left( |\hat\UU_i - \UU_i|>2\delta_n, \ZZ_i\in \Zc \right)^{1-1/\alpha},
\eqa
that is less than $\ep$ when $n$ is large enough (Assumption~\ref{unif_uu}).
This provides
$$ \sup_\beta\left|\frac{1}{n}\sum_{i=1}^n  (\hat\omega_{i,n}-\omega_{i,n})\ln c_{\psi(\beta,\beta'\zb_i)}(\UU_{i}) \right|  = o_P(1).$$

\mds

Similarly, we prove
$ n^{-1}\sup_\beta\left| \sum_{i=1}^n  (\omega_{i,n}-\omega_{i})\ln c_{\psi(\beta,\beta'\zb_i)}(\UU_{i}) \right| = o_P(1).$
We deduce easily
$\sup_{\beta\in
\mathcal{B}}|M_n^*(\beta)-M_n^{**}(\beta)|=o_P(1)$ because $n_i/n$ tends to a constant a.e.

\mds

To conclude the proof, we can apply a usual uniform law of large
numbers. For instance, Lemma 2.4 in Newey and McFadden (1994) tells us that~(\ref{A_UnifLLN}) insures that $\sup_{\beta\in \mathcal{B}}|M_n^{**}(\beta)-M(\beta)|=o_P(1)$. Therefore, we get that $\hat\beta$ tends to $\beta_0$ in probability.
\end{proof}

\mds

Until now, we have not specified how we estimate the link function $\psi$ and the pseudo-observations $\hat\UU_i$. This will be the subject of the next two subsections.

\subsection{Estimation of the link function $\psi$}
\label{psi_mgt}

For inference purpose, we need a relationship between the previous link function $\psi$ and some quantities that can be estimated empirically.
Typically, there are two possibilities in practice.
\begin{itemize}
\item[(A1)] There exists a known ``explicit'' functional $\Psi$ s.t., for any $\beta\in\Rb^m$,
\beq \psi (\beta,\beta'\zb) = \Psi\left(C_{\beta}(\cdot | \beta'\zb)    \right).
\label{hypPsiC}
\eeq
\item[(A2)] There exists a known ``explicit'' functional $\Psi$ s.t., for any $\beta\in\Rb^m$,
\beq
 \psi (\beta,\beta'\zb) = \Psi\left(H_{\beta}(\cdot |  \beta'\zb)    \right),
 \label{hypPsiH}
 \eeq
where $H_{\beta}(\cdot | y)$ denotes the cdf of $(\XX,\ZZ)$ conditional on $\beta'\ZZ=y$ and given $\beta$.
\end{itemize}
Note that (A1) is a particular case of (A2), due to Sklar's theorem. Both situations are distinguished for pedagogical reasons (see the discussion on Kendall's tau below).

\mds

In numerous practical situations, Assumptions~(\ref{hypPsiC}) and~(\ref{hypPsiH}) are simply moment-like conditions, as in the GMM methodology:
there is a map $g:\Rb^{\bar m} \rightarrow \Rb^q$, $\bar m \geq m$, such that
$$ \theta(\zb) = g(m_1(\beta_0,\beta'_0 \zb),\ldots,m_{\bar m}(\beta_0,\beta'_0 \zb)),$$
where $m_k(\beta,y)\in \Rb$, $k=1,2,\ldots$, are ``moment'' relations based on the underlying distributions.
In the case of~(\ref{hypPsiC}), these moment relations are directly linked to conditional copulae by
\bqan
\lefteqn{ m_k(\beta,y)=E[\chi_k(\UU_{\ZZ},\beta'\ZZ) | \beta'\ZZ=y] = E[E[\chi_k(\UU_{\ZZ},\beta'\ZZ) | \ZZ] |\beta'\ZZ=y] }\nonumber\\
&=&E[ \int \chi_k(\uu,\beta'\ZZ)\, C(d\uu |\ZZ) |\beta'\ZZ=y ] =\int \chi_k(\uu,y)\, C_{\beta}(d\uu |\beta'\ZZ= y),
\label{formMoment2}
\eqan
for some known functions $\chi_k$, $k=1,\ldots,\bar m$.

\mds

In the case of~(\ref{hypPsiH}), there exist some ``moments'' $m_k(\beta,y)\in \Rb$, $k=1,2,\ldots$, based on the underlying distribution of $(\XX,\ZZ)$ given $\beta'\ZZ=y$:
\beq
m_k(\beta,y)=E[\chi_k(\XX,\ZZ) | \beta'\ZZ=y]=\int \chi_k(\xx,\zb)\, H_\beta(d\xx,d\zb |\beta'\ZZ=y ).
\label{formMoment}
\eeq

During the estimation procedure, the latter moments $m_k$, or more generally the cdfs' $C_\beta(\cdot | \beta'\zb)$ and
$H_\beta (\cdot |\beta'\zb)$ in (A1) and (A2), will be replaced by some empirical counterparts.
The formalism of (A2) behaves nicer than (A1), because it is simpler to work with the observations $(\XX_i,\ZZ_i)$ directly rather than with vectors $\UU_{i}$ (i.e. some i.i.d. realizations of the random vector $\UU_{\ZZ}$). Indeed, since $\UU_{\ZZ}$ cannot be observed, the latter quantities $\UU_i$ have to be estimated too, adding another level of complexity.

\mds

Note that, in theory, we are always living under (A1) or (A2), but the mappings $\Psi$ may be not explicit. In practice, this means there may exist some complex nonlinear relationships between the underlying distributions and the parameters, given $\ZZ$. It is similar to argue that the GMM approach is potentially ``universal'', but it is not (easily) practicable without any moments that can be written analytically.

\mds

{\bf\it Example 2: Spearman's rho.}

A natural candidate is given by
$m_k(\beta,\beta' \zb)=\rho(\beta,\beta' \zb)$, a multivariate extension of the usual Spearman's rho, defined by
$$ \rho(\beta,y) = \int \left(C_{\beta}(\uu | \beta'\ZZ=y) - \prod_{j=1}^d u_j\right) \, d\uu.$$
Through a $d$-dimensional integration by parts, check that this moment is of the type~(\ref{formMoment2}). Therefore, we work under (A1).
Other definitions of Spearman's rho are possible with an arbitrary dimension $d$: see Schmidt and Schmid (2007), for instance.
Note that, when $d=2$, $\rho(\beta,y)$ is simply the correlation between $F_1(X_1 |\ZZ)$ and $F_2(X_2 |\ZZ)$ given $\beta'\ZZ$.
Therefore, it can be estimated relatively easily, at least when the dimension of $\ZZ$ is ``reasonable''.

\mds

{\bf\it Example 3: Kendall's tau.}
To fix the ideas, let us assume $d=2$.
The Kendall's tau of $\XX$ conditional on $\ZZ=\zb$ appears in several papers in the literature (see Veraverbeke et al., 2011 for instance).
Under the single-index assumption, it is written here
\beq
\tau_{\zb} = 4\int C(\uu | \zb) C(d\uu | \zb)-1 = 4\int C_\beta(\uu | \beta'\zb) C_\beta(d\uu |\, \beta'\zb)-1.
\label{defTau}
\eeq
Since it depends only on $\beta'\zb$, it is denoted by $\tau(\beta,\beta'\zb)$.
Then, managing Kendall's tau, we work under Assumption (A1) usually.
The parameter $\beta$ and then $\psi(\beta,\beta'\zb)$ can be estimated empirically, replacing $C_\beta(\cdot |\beta'\zb)$ by an empirical
counterpart in the previous integral.

\mds

If $(\XX,\ZZ)$ and $(\YY,\ZZ)$ denote independent copies knowing $\ZZ$, note that
\begin{eqnarray*}
\lefteqn{ E[\1( X_1 > Y_1, X_2 > Y_2) |\,\beta'\ZZ=y   ] }\\
&=& E[E[\1\left( F_1(X_1 | \ZZ) > F_1(Y_1 |\ZZ), F_2(X_2 |\ZZ) > F_2(Y_2 |\ZZ) \right) | \ZZ] |\,\beta'\ZZ=y   ]  \\
&=& E[\int  C(\uu | \ZZ)\, C(d\uu |\ZZ) | \beta'\ZZ=y]= \int  C_\beta(\uu | y) \, C_\beta(d\uu |\, y).  \hspace{3cm}
\end{eqnarray*}
This implies that the Kendall's tau of $\XX$ given $\beta'\ZZ=y$ is $\tau(\beta,y)$, under~(\ref{single_index_cop}).
Incidentally, we have proved that
$$ \int  C_\beta(\uu | y) \, C_\beta(d\uu | y) = \int  \tilde C_\beta(\uu | y) \, \tilde C_\beta(d\uu | y), \, \text{and}$$
\beq
\tau(\beta,\beta'\zb)=4  \int  \tilde C_\beta(\uu | y) \, \tilde C_\beta(d\uu | y)-1.
\label{tautilde}
\eeq

\mds

Moreover, since
$$ E[\1( X_1 > Y_1, X_2 > Y_2) |\beta'\ZZ=y   ] = \int H_\beta(\xx,+\infty | \beta'\ZZ=y)\, H_\beta(d\xx,+\infty |\beta'\ZZ=y),$$
we recognize Assumption (A2), and
\beq
\tau(\beta,\beta'\zb)=4  \int H_\beta(\xx,+\infty | \beta'\zb)\, H_\beta(d\xx,+\infty |\beta'\zb) -1.
\label{tauHbeta}
\eeq

In other terms, Kendall's tau are of the two types (A1) and (A2) simultaneously. And the relations~(\ref{tautilde}) and~(\ref{tauHbeta})
 are very useful in practice. Indeed, the estimation of $H_\beta(\cdot | y)$ or $\tilde C_\beta(\cdot | y)$ is less demanding than
the non parametric estimation of $C_\beta(\cdot | \beta'\zb)$: an empirical counterpart of $H_\beta(\xx | y)$ or $\tilde C_\beta(\uu | y)$ does not suffer from the curse of dimension because it necessitates only conditioning subsets in $\Rb$, contrary to $ C_\beta(\uu | y)$ that involves conditioning wrt $\zb \in \Rb^p$ to manage its marginal laws.

\mds

In dimension $d$, many Kendall's tau can be built.
Logically, these Kendall's tau may be associated to any couple of variables $(X_i,X_j)$, $i,j=1,\ldots,d$, $i\neq j$.
Or they can be defined formally as in~(\ref{defTau}), with $d$-dimension integrals, or even $d'$-dimension integrals, $d'<d$ if we focus on some sub-vectors of $\XX$.
Globally, all such quantities are linear function of $\int C(\uu_I,\1_{\bar I} | \zb) \,  C(d\uu_I, \1_{\bar I} | \zb)$, where $I$ is a subset of $\{1,\ldots,d\}$ and $\bar I$ is its complement~\footnote{Obviously, $\uu_I,\1_{\bar I}$ denotes a $d$-dimensional vector whose components are $u_k$ when $k\in I$, and are equal to one otherwise.}. These dependence measures are candidates to provide convenient moments.
Note the two usual generalizations of Kendall's tau in dimension $d$: the first one has been proposed by Joe (1990) as
\beq
\tau_d (\zb) := \frac{1}{2^d -1}\left\{ 2^d \int C(\uu | \zb) C(d\uu | \zb)-1 \right\},
\label{defTau_d}
\eeq
and the second one has been introduced by Kendall and Babington Smith (1940) as the average value of Kendall's tau over all possible couples $(X_k,X_l)$, $k,l=1,\ldots,d$, $k\neq l$.
See Genest et al. (2011) for details and complementary results.
In every case, the same arguments as above apply, providing straightforward $d$-dimensional extensions of~(\ref{tautilde}) and~(\ref{tauHbeta}).
As a consequence, such generalized Kendall's tau are of the two types (A1) and (A2) simultaneously. Moreover, they can be written as functionals of $\tilde C_\beta$ itself.

\mds

In practice, the underlying copulae often depend on a few parameters only, say one or two (Archimedean copulae, typically).
In  the latter case, their Kendall's tau and/or Spearman's rho are sufficient to identify the underlying copula parameters.
And there often exists an explicit one-to-one relationship between $\theta$ and the latter dependence measures.
But, obviously, other moments may be considered, particularly some functionals of the conditional copula functions only.

\mds

Now, let us specify our estimator $\hat\psi$. The simplest solution we
adopt is to invoke kernel-type regression functions.
Under (A1), we can replace simply the conditional copula $C_\beta(\cdot | \beta'\ZZ=y)$ by a consistent estimator $\hat{C}(\cdot | \beta'\ZZ=y)$.
Several candidates exist in the literature. Historically, Fermanian and Wegkamp (2012) were the first ones to propose a nearest neigbour estimator
 of conditional copulae. Gijbels et al. (2011) introduced other non-parametric estimates, including Nadaraya-Watson, Gasser-M\"{u}ller, etc.

\mds

Under (A2), for every
$\beta\in \Bc$ and $y\in \Rb$, set $\hat\psi(\beta,y):= \Psi(\hat H_\beta(\cdot | y))$, where
\beq
 \hat H_\beta( \xx,\zb | y)= \sum_{j=1}^n w_{\beta,j,n}(y)\1 (\XX_j\leq \xx,\ZZ_j\leq \zb ),
 \label{def_Hbeta}
\eeq
$$w_{\beta, j, n}(y)=K\left(\frac{\beta'\ZZ_j-y}{h_n}\right) / \sum_{l=1}^n K\left(\frac{\beta'\ZZ_l-y}{h_n}\right),$$
for some kernel function $K:\Rb \rightarrow \Rb$ and some bandwidth sequence
$(h_n)$, $h_n>0$. Hereafter, we remove the latter sub-index $n$, i.e. $h:=h_n$ simply for any bandwidth.

\mds

To satisfy Condition~(\ref{unif_psi}), we have to rely on the functional link between the parameter $\psi$ and the underlying distributions, as evaluated under
(A1) and/or (A2). This depends on the regularity of the corresponding functionals $\Psi$ and on the uniform distance between the conditional empirical cdfs' and true ones.

\mds

For instance, under (A2), assume $\Psi$ is Lipschitz, with a Lipschitz constant $\lambda$ (at least when $\beta \in \Bc$ and $\zb\in\Zc$, and then $\zb'\beta$ belongs to a real compact subset). For
such couples $(\beta,\zb)$, we have
$$|\hat{\psi}(\beta,\beta'\zb)-\psi(\beta,\beta'\zb) |\leq \lambda \| \hat H_\beta(\cdot | \beta'\zb) -H_\beta(\cdot | \beta'\zb) \|_\infty$$
Assuming $\hat H_\beta$ is given by~(\ref{def_Hbeta}) and applying Corollary 3 in Einmahl and Mason (2005), we obtain
$$\sup_i|\hat{\psi}(\beta,\beta'\ZZ_i)-\psi(\beta,\beta'\ZZ_i) |\omega_{i,n}
\leq \lambda\sup_i \| \hat H_\beta(\cdot | \beta'\ZZ_i) -H_\beta(\cdot | \beta'\ZZ_i) \|_\infty \omega_{i,n} \longrightarrow 0,$$
a.e. and uniformly wrt $\beta \in \Bc$. This is sufficient to satisfy~(\ref{unif_psi}).

\mds

Note that $\Psi$ is Lipschitz in the case of Kendall's tau. Indeed, through an integration by parts and for two cdfs' $H$ and $H'$ (for which $H$ or $H'$ is continuous), we observe that
\begin{eqnarray*}
\lefteqn{ |\int H(\cdot | \zb) \, dH(\cdot, |\zb) - \int H'(\cdot | \zb) \, dH'(\cdot, |\zb) | }\\
&\leq &
|\int (H-H')(\cdot | \zb) \, dH(\cdot, |\zb) | + | \int (H-H')(\cdot | \zb) \, dH'(\cdot, |\zb) |  \\
&\leq &
\|(H-H')(\cdot | \zb)\|_\infty \cdot \left( \int |dH|(\cdot, |\zb) +  \int |dH'|(\cdot, |\zb) \right)
\leq 2 \|(H-H')(\cdot | \zb)\|_\infty.
\end{eqnarray*}

More generally, under (A2) and if $\Psi$ is Hadamard differentiable, there exist continuous linear maps $\dot\Psi_i$ s.t.
\begin{eqnarray*}
\lefteqn{ \hat{\psi}(\beta,\beta'\ZZ_i)-\psi(\beta,\beta'\ZZ_i) = \Psi( \hat H_\beta(\cdot | \beta'\ZZ_i))
- \Psi( H_\beta(\cdot | \beta'\ZZ_i))  }\\
&=&
\dot\Psi_i( (\hat H-H)_\beta(\cdot | \beta'\ZZ_i))
+ o(\| (\hat H-H)_\beta(\cdot | \beta'\ZZ_i)\|) .
\end{eqnarray*}
Under some additional conditions (particularly on the $\dot\Psi_i$), we get typically the uniformity of the latter identity wrt $\ZZ_i\in \Zc$.
But, thanks to Theorem 3 in Einmahl and
Mason (2005), there exists a sequence of positive numbers $(a_n)$, $a_n\rightarrow 0$, s.t.
$$ a_n\sup_{\beta \in \Bc}\sup_{\zb \in \Zc}
\| (\hat H-H)_\beta(\cdot | \beta'\zb) \|_\infty \longrightarrow 0$$
a.e. when $n\rightarrow 0$.
The latter result is true uniformly wrt bandwith sequences $(h_n)$ s.t. $nh_n/\ln n >>1$ and $h_n\rightarrow 0$.
Therefore,~(\ref{unif_psi}) is usually satisfied when $\Psi$ is Hadamard differentiable.

\mds

Note that the uniform consistency of the conditional copula function, simultaneously wrt to its argument and the conditioning value, is
not available in the literature. Therefore, checking Condition~(\ref{unif_psi}) under (A1) is more difficult than under (A2).

\mds

\subsection{The choice of the pseudo-estimations $\hat\UU$}

By assumption, $\beta$ is the index of the underlying
dependence functions (copulae) only. Therefore, $\hat \UU_i$ does
not depend on $\beta$.
Now, let us discuss the possible choices for $\hat \UU_i$, $i=1,\ldots,n$.
Actually, in Section~\ref{AsNorm}, we consider a generic class of estimates s.t., for all $k=1,\ldots,d$,
\beq
 \hat F_{k}(x|\zb) - F_{k}(x|\zb) = \frac{1}{n} \sum_{j=1 }^n a_{k,n}(\XX_j,\ZZ_j,x,\zb) +
 r_n(x,\zb),
 \label{condHatF}
\eeq
for some sequence $(r_n(x,\zb))$ that tends to zero sufficiently quickly~\footnote{in particular to satisfy~(\ref{unif_uu})} uniformly in probability, and for some
particular functions $a_{k,n}$. Then, we set $\hat U_{i,k} :=  \hat
F_{k}(X_{i,k}|\ZZ_i)$, $i=1,\ldots,n$, $k=1,\ldots,d$.
A lot of estimators of $F_{k}$, and then of $\UU_{i}$, may
be built and satisfy~(\ref{condHatF}).

\mds

A first example of such estimates is given by parametric
marginal conditional distributions: for every $x$ and $\zb$,
$F_{k}(x|\zb)=G_{k,\theta_k(\zb)}(x) $, for some family of cdfs' $\Gc_k := \{ G_{k,\theta_k}, \; \theta_k \in \Theta_k\}$.
Since this model is parametric, the function $\theta_k$ depends on a vector of parameters $\eta_k\in \Rb^{m_k}$. With a light abuse of notations, set
$\theta_k(\zb)=\theta_k(\zb,\eta_k)$, and $\theta_k(\cdot, \eta)$ is known for every $\eta$.
Assume we have found a consistent and asymptotically normal estimate $\hat\eta_k$, and set
$\hat F_{k}(x|\zb)=G_{k,\theta_k(\zb,\hat\eta_k)}(x) $.
This implies $\hat U_{i,k}=G_{k,\theta_k(\zb_i,\hat\eta_k)}(X_{i,k}) $.

\mds

Clearly, for every $i$, there exists $\theta_{k,i}^*$ and $\eta_k^*$ s.t.
$$|\hat U_{i,k} - U_{i,k} |\leq  |\nabla_\theta G_{k,\theta_{k,i}^*}(X_{i,k}) |\cdot |\partial_2 \theta_k (\ZZ_i,\eta_k^*) | \cdot |\hat\eta_k -\eta_k|,$$
where $| \theta_k(\ZZ_i,\eta_k) - \theta_{k,i}^* | \leq | \theta_k(\ZZ_i,\hat\eta_k) - \theta_k(\ZZ_i,\eta_k)|$ and
$|\eta_k - \eta_k^*|\leq |\hat\eta_k - \eta_k|$.

\mds

Typically, if $ \sup_{\theta_k} |\nabla_\theta G_{k,\theta_{k}}(X_{i,k}) |$ and
$\sup_{\eta_k}|\partial_2 \theta_k (\ZZ_i,\eta_k) |$ are bounded in probability (integrable, in particular), the condition~(\ref{unif_uu}) is satisfied, even without trimming.

\mds

Moreover, in a lot of usual cases (M-estimates, e.g.), it
can be checked by a limited expansion that the functions $\hat
F_{k}(x|\zb) $
satisfy~(\ref{condHatF}). Typically, for asymptotically normal estimators, we observe $n r_n(x,\zb)=O_P(1)$, and this result may be uniform under some conditions of regularity
concerning $G$ and $\theta_k(\cdot)$.
Such a choice of conditional margins induces the so-called estimator $\hat\beta^{(1)}$.

\mds

A second candidate is provided by nonparametric estimates of
conditional expectations. A usual kernel-based nonparametric
estimator of $ F(\cdot|\zb)$ on $\Rb^d$ is given by
\beq
\hat{F}(\xx|\zb)=\sum_{j=1}^n w_{j,n}(\zb)\1 (\XX_j\leq \xx),
\label{KernelF}
\eeq
with the weights
\beq
w_{ j, n}(\zb)=\KK\left(\ZZ_j-\zb, \hh\right) / \sum_{l=1}^n \KK\left(\ZZ_l-\zb,\hh\right),
\label{my_weights}
\eeq
where $\KK$ is a multivariate kernel and $\hh:=(h_1,\ldots,h_p)$ is a $p$-vector of
bandwidths $h_k>0$. To simplify and w.l.o.g., we can restrict ourselves on products of $p$ univariate kernels $K_k$ i.e.
\beq
\KK\left(\ZZ_j-\zb, \hh\right) = \frac{1}{h_1\cdots h_p}\prod_{k=1}^p K_k\left( \frac{Z_{j,k}-z_k}{h_k}\right).
\label{usual_kernel}
\eeq
Therefore, some nonparametric estimators of every marginal conditional cdf
$F_{k}(\xx|\zb)$ are obtained by setting $\hat{F}_{k}(x|\zb)=\hat{F}(x,+\infty_{(-k)}|\zb)$. The marginal
``unfeasible'' observations are $U_{i,k}=F_{k}(X_{i,k}|\ZZ_i),$ and
their estimated versions are $\hat{U}_{i,k}=\hat{F}_{k}(X_{i,k}|\ZZ_i)$.
In this case, it can be verified that~(\ref{condHatF}) is satisfied.

\begin{lemma}
\label{my_Fk}
For $k=1,\ldots,d$, define $\hat F_k$ as
\beq
\hat{F}_k(x|\zb)=\sum_{j=1}^n w_{j,n}(\zb)\1 (X_{j,k}\leq x), \label{KernelFk}
\eeq
with the weights given by~(\ref{usual_kernel}). Assume
\begin{itemize}
\item $f_{\ZZ}$, the density of $\ZZ$, exists and is strictly positive on $\Zc$. Moreover, it is $s$-times continuously differentiable, $s\geq 2$.
\item for every real $x$ and every $k$, the function $h_k(x,\cdot):\zb\mapsto P(X_k \leq x | \ZZ=\zb)f_{\ZZ}(\zb)$, defined on $\Zc$, is $s$-times continuously differentiable. Moreover,
$$ \sup_{x \in \Rb}\sup_{\zb\in \Zc} | d^{s}_{\zb} h_k(x,\zb)| \; \text{is bounded}.$$
\item the underlying kernel $\KK(\cdot,\1)$ is continuous, bounded, of bounded variation, $\int \KK(\zb,\1)\,d\zb=1$ and it is compactly supported~\footnote{To be specific,
this kernel has to be ``regular'' in the sense of Einmahl and Mason (2005), i.e. it has to satisfy their assumptions $K.i-K.iv$.}. Moreover, it is a multivariate $s$-order kernel, i.e.
$$ \int\prod_{j=1}^p z_j^{\alpha_j} K(\zb,\1)\, d\zb =0,$$ for every $p$-uplet of integers $(\alpha_1,\ldots,\alpha_p)$ s.t. $\alpha_j\in \{1,\ldots,s-1\}$ for some index $j$.
\end{itemize}
Then, for any $k=1,\ldots,d$, we have
\beq
 \hat F_{k}(x|\zb) - F_{k}(x|\zb) = \frac{1}{n} \sum_{j=1 }^n a_{k,n}(\XX_j,\ZZ_j,x,\zb) +
 r_n(x,\zb),
 \label{DevAsymFhat}
 \eeq
\bqa
\lefteqn{a_{k,n}(\XX_j ,\ZZ_j,\xx,\zb) = \frac{1}{ f_{\ZZ}(\zb)}
\left( \KK\left(\ZZ_j-\zb,\hh\right) \1(X_{j,k}\leq x) - E[\KK\left(\ZZ_j-\zb,\hh\right) \1(X_{j,k}\leq x)]  \right.  }\\
&-& \left.
P(X_k\leq x | \ZZ=\zb) \{ \KK\left(\ZZ_j-\zb,\hh\right) -E[ \KK\left(\ZZ_j-\zb,\hh\right)]  \} \right),
\eqa
\beq
 \lim\sup_n   \min(u^2_{n,1},u_{n,2}) \sup_{x\in \Rb,\zb\in \Zc} |r_n (x,\zb) | \leq  C_1 \; \text{a.e., and}
 \label{DevAsymRn}
 \eeq
\beq
\lim\sup_n  \min(u_{n,1},u_{n,2})\sup_{x\in \Rb,\zb\in \Zc}| \hat F_{k}(x|\zb) - F_{k}(x|\zb)| \leq C_2\; \text{a.e.}
 \label{DevAsymFhat2}
 \eeq
for some positive constants $C_1$, $C_2$,
$$  u_{n,1}:=\left(  \frac{n \prod_{l=1}^p h_l }{\max(-\ln(\prod_{l=1}^p h_l),\ln \ln n)} \right)^{1/2},\;\text{and}\; u_{n,2}:= \frac{1}{\max_{l=1,\ldots,p} h_l^s}  \cdot$$

\end{lemma}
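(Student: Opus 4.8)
The plan is to reduce $\h F_k$ to the usual Nadaraya--Watson ratio and then, separately in the numerator and in the denominator, to peel off a centered stochastic part (to be handled by the uniform-in-bandwidth empirical-process bounds of Einmahl and Mason, 2005) from a deterministic bias part (to be handled by an $s$-th order Taylor expansion). First I would write
$$\h g_k(x,\zb):=\frac1n\sum_{j=1}^n\KK(\ZZ_j-\zb,\hh)\1(X_{j,k}\leq x),\qquad \h f_{\ZZ}(\zb):=\frac1n\sum_{j=1}^n\KK(\ZZ_j-\zb,\hh),$$
so that $\h F_k(x|\zb)=\h g_k(x,\zb)/\h f_{\ZZ}(\zb)$, and put $g_k(x,\zb):=h_k(x,\zb)=P(X_k\leq x\,|\,\ZZ=\zb)f_{\ZZ}(\zb)$, so that $F_k(x|\zb)=g_k(x,\zb)/f_{\ZZ}(\zb)$. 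Since $\inf_{\zb\in\Zc}f_{\ZZ}(\zb)\geq f_0>0$ and, as a by-product of the step below, $\sup_{\zb\in\Zc}|\h f_{\ZZ}(\zb)-f_{\ZZ}(\zb)|\to0$ a.e., the estimate $\h f_{\ZZ}$ is bounded below by $f_0/2$ on $\Zc$ for $n$ large, a.e.; on that event a first-order expansion of the ratio gives, uniformly in $(x,\zb)\in\Rb\times\Zc$,
\beq
\h F_k(x|\zb)-F_k(x|\zb)=\frac{\h g_k(x,\zb)-g_k(x,\zb)}{f_{\ZZ}(\zb)}-\frac{g_k(x,\zb)}{f_{\ZZ}(\zb)^2}\bigl(\h f_{\ZZ}(\zb)-f_{\ZZ}(\zb)\bigr)+R_n(x,\zb),
\label{E_ratioexp}
\eeq
with $|R_n(x,\zb)|\leq C\bigl(\sup_{x,\zb}|\h g_k-g_k|+\sup_{\zb}|\h f_{\ZZ}-f_{\ZZ}|\bigr)^2$ for a constant $C$ depending only on $f_0$ and on $\sup_{x,\zb}|g_k|$.

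\mds

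Next I would split $\h g_k-g_k=(\h g_k-E[\h g_k])+(E[\h g_k]-g_k)$ and likewise $\h f_{\ZZ}-f_{\ZZ}=(\h f_{\ZZ}-E[\h f_{\ZZ}])+(E[\h f_{\ZZ}]-f_{\ZZ})$. Inserting the two \emph{centered} parts into the first two terms of~(\ref{E_ratioexp}) and using $g_k(x,\zb)/f_{\ZZ}(\zb)=P(X_k\leq x\,|\,\ZZ=\zb)$, a direct rearrangement shows that their joint contribution equals exactly $n^{-1}\sum_{j=1}^n a_{k,n}(\XX_j,\ZZ_j,x,\zb)$ with $a_{k,n}$ as in the statement, and that each summand has mean zero (it is a triangular array through $\hh$). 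For the \emph{bias} parts I would use $E[\h g_k(x,\zb)]=\int\KK(\ttt,\1)\,h_k\bigl(x,(z_l+h_lt_l)_{l=1,\ldots,p}\bigr)\,d\ttt$, Taylor-expand $h_k(x,\cdot)$ around $\zb$ to order $s$, cancel the intermediate terms using the $s$-order property of $\KK$ (the zero-order term returning $h_k(x,\zb)$ since $\int\KK=1$), and bound the order-$s$ remainder via the compact support of $\KK$ and $\sup_{x\in\Rb,\zb\in\Zc}|d^s_{\zb}h_k(x,\zb)|<\infty$; the same applies to $\h f_{\ZZ}$ since $f_{\ZZ}$ is $s$-times continuously differentiable on a neighbourhood of the compact $\Zc$. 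This gives $\sup_{x,\zb}|E[\h g_k]-g_k|+\sup_{\zb}|E[\h f_{\ZZ}]-f_{\ZZ}|=O(\max_l h_l^s)=O(1/u_{n,2})$, so that $r_n(x,\zb)$ is the sum of the contribution of these bias terms to~(\ref{E_ratioexp}) and of $R_n(x,\zb)$.

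\mds

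It then remains to bound the centered parts $\sup_{x\in\Rb,\zb\in\Zc}|\h g_k(x,\zb)-E[\h g_k(x,\zb)]|$ and $\sup_{\zb\in\Zc}|\h f_{\ZZ}(\zb)-E[\h f_{\ZZ}(\zb)]|$, and this is where Einmahl and Mason (2005) does the work: since $\{\1(\cdot\leq x):x\in\Rb\}$ is a VC class and $\KK(\cdot,\1)$ is a regular kernel of bounded variation with compact support, their uniform-in-bandwidth results give, a.e.,
$$\lim\sup_n\ u_{n,1}\Bigl(\sup_{x\in\Rb,\zb\in\Zc}|\h g_k(x,\zb)-E[\h g_k(x,\zb)]|+\sup_{\zb\in\Zc}|\h f_{\ZZ}(\zb)-E[\h f_{\ZZ}(\zb)]|\Bigr)\leq C_3,$$
with $u_{n,1}$ as in the statement (in particular the uniform consistency of $\h f_{\ZZ}$ used above follows by adding its bias bound). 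Plugging the last two displays into~(\ref{E_ratioexp}), the linear term is $n^{-1}\sum_j a_{k,n}$, of size $O(1/u_{n,1})$; the bias part of $r_n$ is $O(1/u_{n,2})$; and $R_n=O\bigl((1/u_{n,1}+1/u_{n,2})^2\bigr)$, which is dominated by $\max(1/u_{n,2},1/u_{n,1}^2)=1/\min(u_{n,1}^2,u_{n,2})$ since $u_{n,1},u_{n,2}\to\infty$. This yields~(\ref{DevAsymRn}), and combining the size of the linear term with the bias bound yields~(\ref{DevAsymFhat2}).

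\mds

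I expect the main obstacle to be exactly this last step: obtaining \emph{simultaneous} uniformity in $x$ over the unbounded real line, in $\zb$ over $\Zc$, and over the admissible bandwidths $h_l$ --- precisely what the VC-type machinery of Einmahl and Mason is designed for --- whereas the ratio bookkeeping, the identification of $n^{-1}\sum_j a_{k,n}$, and the bias Taylor expansion are routine. A secondary, easier point is to check that the quadratic ratio remainder $R_n$ is genuinely of smaller order, for which one only needs $\h f_{\ZZ}$ to be eventually bounded away from $0$ on $\Zc$, as established in the first step.
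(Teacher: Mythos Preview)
Your proposal is correct and follows essentially the same route as the paper: write $\hat F_k$ as a Nadaraya--Watson ratio $\hat h/\hat g$, linearize the ratio, identify the centered linear piece as $n^{-1}\sum_j a_{k,n}$, bound the centered stochastic parts by Einmahl and Mason (2005) to get the $u_{n,1}$ rate, and handle the bias via an $s$-th order Taylor expansion to get the $u_{n,2}$ rate. The only cosmetic difference is that the paper writes the ratio remainder as an exact algebraic identity (splitting $r_n$ into $r^{(1)}$, a sum of products of centered terms with denominators $E\hat g$ and $\hat g$, and $r^{(2)}=E\hat h/E\hat g-F_k$), whereas you bound the quadratic remainder $R_n$ directly after expanding around $(g_k,f_{\ZZ})$; both organizations yield the same orders.
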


\begin{proof}
Equation~(\ref{DevAsymRn}) is deduced directly from Theorem 2 in Einmahl and Mason (2005).
Moreover, by straightforward calculations, we get
$$r_{n,k}(x,\zb)=r^{(1)}_{n,k}(x,\zb)+r^{(2)}_{n,k}(x,\zb),$$
$$ r^{(1)}_{n,k}(x,\zb) = \frac{E\hat h(x,\zb)(\hat g - E\hat g)^2 (\zb)}{(E\hat g)^2 \hat g (\zb)}-\frac{(\hat h - E\hat h)(x,\zb)(\hat g - E\hat g)(\zb)}{\hat g(\zb) E\hat g (\zb)},$$
$$ r^{(2)}_{n,k}(x,\zb) = \frac{E\hat h (x,\zb) }{ E\hat g (\zb)}-F(x_k|\zb),$$
$$ \hat h(x,\zb)= \frac{1}{n} \sum_{j=1}^n \KK\left(\ZZ_j-\zb, \hh\right).\1(X_{j,k} \leq x) ,\;
 \hat g(\zb)= \frac{1}{n} \sum_{j=1}^n \KK\left(\ZZ_j-\zb, \hh\right),$$
that tends typically to $g=f_{\ZZ}$ and $h_k(x,\zb)=P(X_k\leq x | \ZZ=\zb) g(\zb)$.
By invoking the equations (3.7) and (3.8) in the proof of Theorem 2 in Einmahl and Mason (2005), we get the uniform convergence of $\hat h$ (resp. $\hat g$) towards
$E\hat h$ (resp. $E\hat g$) almost surely, at the same rate $u_n$.
Note their remark 8 justifies the choice of different bandwidths for every component of $\ZZ$.

\mds

Moreover, by usual limited expansion of $E\hat g - g$ and $E\hat h - h$, we can deal with the bias term. Due to our assumptions concerning the order of the kernel $\KK$ and the regularity conditions on the underlying laws, we obtain easily that
$r_{n,k}^{(2)} (x,\zb) = O(\max_{l=1,\ldots,p} h_l^s)$, providing the result.
\end{proof}

\mds

As a consequence, the condition~(\ref{unif_uu}) is satisfied for the nonparametric versions on $\hat\UU_i$ and for a wide range of bandwiths.
Let us denote the associated estimator by $\hat\beta^{(2)}$.

\mds

Between the two previous polar cases, there exist a lot of
candidates. For instance, to avoid the curse of dimension, it
may be assumed that any marginal conditional distribution depends on a reduced subset of $\ZZ$-components.
This often corresponds to intuition. For instance, imagine we want to describe the co-movements of corporate default rates in the USA and in Europe. A set of macro-economic variables (GDP, inflation, unemployment, etc) for both economic areas will be our explanatory variable $\ZZ$.
Then, it makes sense to assume that default rates in the USA (resp. in Europe) depend on American (resp. European) macro-factors only, but that the conditional copula between both default rates depends on the whole set of macro-factors $\ZZ$.

\mds

Alternatively, it could be assumed that some conditional distributions $F_k(\cdot | \ZZ)$
are given by particular single-index models, but with some parameter $\beta_k\in \Rb^{m_k}$ that are different of $\beta$.
If the latter index $\beta_k$ is estimated consistently by
$\hat\beta_k$, then we can adapt easily the previous nonparametric kernel
estimator: for any real number $y$,
$$\hat{F}_{k,\hat\beta_k}(x|y)=\sum_{j=1}^n w_{\hat\beta_k,j,n}(y)\1 (X_{j,k}\leq x),$$
where
$$w_{\hat\beta_k, j, n}(y)=K\left(\frac{\hat\beta_k'\ZZ_j-y}{h}\right) / \sum_{l=1}^n K\left(\frac{\hat\beta_k'\ZZ_l-y}{h}\right),$$
for some kernel function $K:\Rb \rightarrow \Rb$ and some bandwidth
$h>0.$ Obviously, $\hat{F}_{k,\hat\beta_k}(x|y)$ provides a nonparametric estimator of the cdf $F_{k,\beta_k}(x|y)$.
In this case, $U_{k,\zb}=F_{k}(X_{k}|\beta_k'\zb).$ To deal with pseudo-observations, we set
$U_{i,k,\beta_k}=F_{k,\beta_k}(X_{i,k}|\beta_k'\ZZ_i),$
and
$\hat{U}_{i,k}=\hat{F}_{k,\hat\beta_k}(X_{i,k}|\hat\beta_k'\ZZ_i)$.
For some conditions of regularity,~(\ref{condHatF}) can
be verified, see for example Du and Akritas (2002) for such a representation in the more general case where censored data is present. When all margins are assumed single-index, let us
denote by $\hat\beta^{(3)}$ the corresponding $\beta$ estimator.

\mds


Now, let us verify the conditions of Theorem~\ref{Th_consistency}, particularly Assumption~\ref{A_unifapprox}, in some particular cases.

\subsection{Examples}
\label{subs_examples}
Let us illustrate the previous ideas with a few standard copula models.

\mds

{\bf\it Example 4: the Gaussian copula}

Let us consider a $d$-dimensional conditional copula model: for every $\uu$ and $\zb$ and with usual notations, the true underlying copula is
$$ C_{\beta_0}(\uu | \ZZ=\zb)= C^G_{\Sigma(\zb)}(\uu)=\Phi_{\Sigma (\zb)} \left( \Phi^{-1}(u_1),\ldots,\Phi^{-1}(u_d)\right),$$
where the correlation matrix $\Sigma(\zb)=[\theta_{k,l}(\zb)]_{1\leq k,l \leq d}$ depends on the index $\beta_0'\zb$ only. With our previous notations,
$ \Sigma(\zb)=\psi(\beta_0,\beta'_0\zb)$. It is well-known that every component $\theta_{k,l}(\zb)$ of $\Sigma(\zb)$ is a function of a Kendall's tau:
$ \theta_{k,l}(\zb)= \sin(\pi \tau_{k,l}(\beta_0,\beta'_0\zb)/2),$ the conditional Kendall's tau that is associated to $(X_k,X_l)$, knowing $\beta_0'\ZZ=\beta'_0\zb$.
The latter quantity can be estimated by standard nonparametric techniques, and then
$$\hat \psi(\beta,\beta'\zb)=\left[\sin(\frac{\pi}{2} \hat\tau_{k,l}(\beta,\beta'\zb)) \right].$$
To be specific, we can choose
$$ \hat\tau_{k,l}(\beta, y) := 4 \int \hat C_{k,l}(u,v | \beta'\ZZ=y)\, \hat C_{k,l}(du,dv | \beta'\ZZ=y) - 1,$$
for some estimator $\hat C_{k,l}(\cdot| \beta'\ZZ=y)$ of the conditional copula of $(X_k,X_l)$ given $\beta'\ZZ=y$.
Alternatively, we can invoke an asymptotically equivalent estimator
$$ \hat\tau_{k,l}(\beta,\beta'\zb) :=4 \sum_{i=1}^n  \sum_{j=1}^n w_{i,h}(\beta'\zb)w_{j,h}(\beta'\zb) \1(X_{k,i} < X_{k,j},X_{l,i} < X_{l,j})-1,$$
for some weights, for instance the standard Nadaraya-Watson kernel
$$ w_{i,h}(y):=K\left( \frac{y- \beta'\ZZ_i}{h} \right) / \sum_{l=1}^n K\left( \frac{y- \beta'\ZZ_l}{h} \right).$$
See Gijbels et al. (2011) for alternative weights and estimators.

\mds

Once we have stated $\hat\psi$, it remains to set the marginal cdfs' $\hat U_k$, $k=1,\ldots,d$, to be able to calculate our estimator $\hat\beta$. To fix the ideas, we rely on the
standard univariate kernel-based conditional distributions, as given in~(\ref{my_weights}): $\hat U_{i,k}:=\hat F(X_{i,k} |\ZZ_i)$ and our estimator is then $\hat\beta^{(2)}$.

\mds

Concerning Assumption~\ref{A_identif}, the only thing to be checked is~(\ref{A_UnifLLN}).
This is guaranteed when the random matrix $\Sigma^{-1}(\ZZ)$ is staying ``under control'', for instance when all eigenvalues of $\Sigma(\ZZ)$ are uniformly bounded from below almost surely.
It is sufficient to assume that
\begin{equation}
\sup_{\zb\in \Zc}\sup_{\beta\in \Bc} \lambda_{\min}(\psi(\beta,\beta'\zb)) \geq \underline{\lambda} >0,
\label{theta_max}
\end{equation}
where $\lambda_{\min}(\Sigma)$ denotes the smallest eigenvalue of any nonnegative matrix $\Sigma$.
In this case, it is easy to bound the log-density of $\XX$ (conditional on $\ZZ$) from above, and to satisfy~(\ref{A_UnifLLN}).

\mds

Assumption~\ref{A_unifapprox} is the most tricky one. In Section~\ref{con_tau}, some sufficient conditions are given to satisfy~(\ref{unif_psi}).
It remains to verify~(\ref{unif_uu}). We can apply our Lemma~\ref{my_Fk}: under its conditions and
if all the bandwidths we consider in $\hat\UU_i$ behave as the same power of $n$, say $n^{-\pi}$ (the usual case), there exists a constant $C$ s.t.,
with probability one,
$$ \lim\sup_n\sup_{i=1,\ldots,n} | \hat \UU_i - \UU_i| .\1(\ZZ_i \in \Zc)/\delta_n \leq C,\; \text{where} \; \delta_n:=\sqrt{\ln(n)} n^{-(1-p\pi)/2} + n^{-\pi s}.$$
Note that, for consistency purpose, we can choose any $\pi$ s.t. $\pi < 1/p$.
And $\nu_n$ can be chosen arbitrarily as long as we have $\nu_n >> \delta_n$, and then the condition~(\ref{unif_uu}) is satisfied.

\mds

Assumption~\ref{A_moments} is satisfied for the Gaussian copula, as in most usual copula families.
In our case and under~(\ref{theta_max}), we choose $r(\uu)\propto \sum_{k=1}^d \left(\Phi^{-1}(u_k)\right)^2$, and
$ \tilde r_k(\uu) \propto  \Phi^{-1}(u_k) \sum_{l=1, l\neq k }^d \left(\Phi^{-1}(u_l)\right) .\left( \phi \circ \Phi^{-1}(u_k) \right)^{-1}$.

\mds

Therefore, the estimator $\hat\beta^{(2)}$ is consistent under the Gaussian copula framework.

\mds

{\bf\it Example 5: the Clayton copula}

The Clayton copula is often useful in finance, because it induces left tail dependence, a common feature of asset returns.
When the values of its parameter are strictly positive, the conditional Clayton copula is written
$$ C(\uu | \zb)= \left( \sum_{k=1}^d u_k^{-\theta(\zb)} - d+1 \right)^{-1/\theta(\zb)},\;\uu\in (0,1)^d,$$
with $\theta(\zb)=\psi(\beta,\beta'\zb)$ under the single-index assumption.
As with the Gaussian copula model, we can evaluate $\hat\psi$ with conditional Kendall's tau, because of their one-to-one mapping.
Indeed, invoking Example 1 in Genest et al. (2011), the Kendall tau of a Clayton model is equal to
$$\tau_d=  \frac{1}{2^{d-1}-1} \left\{ -1 + 2^d \prod_{k=0}^{d-1} \left(\frac{1+k \theta }{2+ k\theta} \right)    \right\}.$$
It is easy to check that the latter mapping between $\tau$ and $\theta$ is one-to-one.
The density of the Clayton copula with parameter $\theta >0$ is given by
$$ \ln c_\theta(\uu |\zb)= \sum_{k=1}^{d-1} \ln(1+k\theta) - (\theta +1)\sum_{k=1}^d \ln (u_k) - \left( \frac{1}{\theta} +d\right)
\ln\left( \sum_{k=1}^d u_k^{-\theta} -1 \right).$$

Assume that there exists $\underline{\theta}$ and $\overline{\theta}$ s.t., for every $\zb\in \Zc$ and every $\beta\in \Bc$,
$ \underline{\theta} \leq \psi(\beta,\beta'\zb) \leq \overline{\theta}$.
Then Assumption~\ref{A_identif} is satisfied. Indeed, note that
$$0\leq \ln\left( \sum_{k=1}^d u_k^{-\theta} -d+1 \right) \leq  \sum_{k=1}^d \ln\left( d u_k^{-\theta} \right)
\leq d\ln(d) -  \overline{\theta} \sum_{k=1}^d \ln\left( u_k \right) .$$
Denoting $V$ a r.v. that is uniform on $(0,1)$, we have
$$E[ \ln (F_k(X_k |\ZZ))] = E_{\ZZ} \left[ E_{X_k |\ZZ}[  \ln (F_k(X_k |\ZZ))   |\ZZ ]\right] =
E_{\ZZ} \left[ E_{X_k |\ZZ}[  \ln V ]\right] =(-1),$$
and~(\ref{A_UnifLLN}) follows.

\mds

Assumption~\ref{A_unifapprox} is satisfied with the same arguments as for the Gaussian copula.
Assumption~\ref{A_moments} can be verified relatively easily. Concerning $\nabla_\theta \ln c_\theta(\uu |\zb)$, the relevant reproducing u-shaped function is given by the product of the functions $r_k(u) \propto -\ln(u_k) \1(u_k\in (0,1/2]) -\ln (1-u_k) \1(u_k\in(1/2,1))$, $k=1,\ldots,d$. To see this, use the following inequality: for every $\uu\in (0,1)^d$,
$$ \frac{ |\sum_{k=1}^d u_k^{-\theta} \ln u_k | }{\sum_{k=1}^d u_k^{-\theta} - d+1} \leq
\max_k u_k^{-\theta} \cdot \frac{ \sum_{k=1}^d |\ln u_k| }{\sum_{k=1}^d u_k^{-\theta} - d+1} \leq  -\sum_{k=1}^d  \ln u_k.$$
To manage $\nabla_{u_k} \ln c_\theta(\uu |\zb)$, the relevant reproducing u-shaped function is obtained by replacing $r_k$ above by $\bar r_k(u)\propto u_k^{-1}(1-u_k)^{-1}$.
Assumption~\ref{A_moments} follows by setting $\tilde r_k(\uu) = \bar r_k (u_k) \prod_{l\neq k} r_l(u_l)$.

\mds

{\bf\it Example 6: the Gumbel copula}

The $d$-dimensional Gumbel copula is given by
$$ C_{\theta}(\uu):= \exp\left( - \left[ \sum_{k=1}^d (-\ln u_k)^{\theta} \right]^{-1/\theta}       \right),$$
for some parameter $\theta >1$.
It exhibits right tail dependence.
\mds

Its Kendall's tau in dimension $d$, as defined by~(\ref{defTau_d}) has been calculated in Genest et al. (2011):
$$ \tau_d = \frac{1}{2^d - 1} \left[-1 + 2^d \sum \Cc_{\vec{m}} \frac{(m-1)!}{(d-1)!}\left(\frac{1}{2\theta} \right)^{m-1}
\prod_{q=1}^d \left( \prod_{l=1}^{q-1} (k-1/\theta)    \right)^{m_q}      \right],$$
where $\vec{m}:=(m_1,\ldots,m_d)$, $m=m_1+\ldots + m_d$, and the summation is taken over all $d$-uplets of integers s.t.
 $m_1+2m_2+\ldots, + dm_d=d$. For every $\vec{m}$, $\Cc_{\vec{m}}$ denotes a positive constant.
But note that
$$\left(\frac{1}{\theta} \right)^{m-1}
\prod_{q=1}^d \left( \prod_{l=1}^{q-1} (k-1/\theta)    \right)^{m_q}=
\left(\frac{1}{\theta} \right)^{d-1}
\prod_{q=2}^d \left( \prod_{l=1}^{q-1} (k\theta-1    \right)^{m_q}:=\chi_{\vec{m}}(\theta),$$
and
\bqa
 (\ln \chi_{\vec{m}})'(\theta) &\propto &
 -(d-1) + \sum_{q=2}^d \sum_{k=1}^{q-1} \frac{k m_q}{k-1/\theta}   \\
&> &
-(d-1) + \sum_{q=2}^d \sum_{k=1}^{q-1} m_q =0.
\eqa
Therefore, every function $\chi_{\vec{m}}$ above is invertible, and the mapping between $\theta$ and $\tau $ is one-to-one, as usual. We can use the empirical (conditional)
Kendall's tau to evaluate the under parameter $\theta$ (or $\theta(\zb)$ more generally).

\mds

The Gumbel copula density is a linear combination of the functions
$$ c_j(\uu):= C_\theta(\uu) \left[ \sum_{k=1}^d (-\ln u_k)^{\theta} \right]^{j/\theta - d} \prod_{k=1}^d \frac{(-\ln u_k)^{\theta-1}}{u_k},$$
for some $j=1,\ldots,d$.
In the single-index model, $\theta$ is a function of $\zb$. Assume that $\theta(\zb)$ belongs to a fixed interval
$[\underline{\theta},\overline{\theta}]\subset ]1,+\infty[$ almost everywhere. Therefore, the density $c_{\theta(\zb)}$ of a Gumbel copula satisfies
$$ c_{\theta(\zb)}(\uu) \leq Cst. C(\uu)
\max_{\theta \in \{ \underline{\theta},\overline{\theta}\}}
\left\{ \left[ \sum_{k=1}^d (-\ln u_k)^{\theta} \right]^{j/\theta - d} \prod_{k=1}^d \frac{(-\ln u_k)^{\theta-1}}{u_k} \right\},$$
for every $\uu\in (0,1)^d$ and some constant $Cst$.
By taking the logarithm of the previous r.h.s., it is easy to check that~(\ref{A_UnifLLN}), and then Assumption~\ref{A_identif}, are satisfied.

\mds

Assumption~\ref{A_unifapprox} is satisfied with the same arguments as above.
After lengthly calculations, we can check Assumption~\ref{A_moments} too, by noticing that
$$ \sup_{\theta \in [\underline{\theta},\overline{\theta}]} | \partial{u_k} c_{\theta} (\uu) |
\leq Cst. h_k(\uu) C_\theta(\uu)/u_k^2:= \tilde r_k(\uu),$$
for some slowly varying functions $h_k$ (deduced from the powers
of the functions $u_l \mapsto \ln u_l$, $l=1,\ldots,d$). The function $\tilde r_k$ belongs to $\Rc_d$ since $C_\theta(\uu)$ behaves as $u_k$ when $u_k$ tends to zero. Therefore
$E[U_k(1-U_k)\tilde r_k(\UU)]<\infty$.

\section{Asymptotic normality}
\label{AsNorm}
\subsection{Notations and assumptions}

For convenience, denote $\psi_i = \psi(\beta_0,\beta_0'\ZZ_i)$
and $\hat\psi_i = \hat\psi(\beta_0,\beta_0'\ZZ_i)$.

\mds

Introduce the set of indicator functions
\bqa
\lefteqn{
\mathcal{H}=\left\{g:[0,1]^d\times \Rb^p \rightarrow [0,1], (\uu,\zb)\mapsto \1(\uu\in B_{\aaa,\bb},\zb \in \tilde B_{\ccc,\dd}),  \right.}\\
& \text{for some} & \left. B_{\aaa,\bb}:=\prod_{k=1}^d [a_k,b_k]\subset [0,1]^d \; \text{and} \;
\tilde B_{\ccc,\dd}:=\prod_{k=1}^p [c_k,d_k] \subset \Rb^p \right\}.
\eqa
Since all the subsets we consider in $\Hc$ are boxes, it is simple to verify that $\Hc$ is universally Donsker (for instance, see Example 2.6.1 and apply Lemma 2.6.17 in van der Vaart and Wellner (1996)).

\mds

\begin{assum}
\label{dini}
For every $\zb\in\Zc$, assume that $\psi_{\zb}:\Bc \rightarrow \Theta, \beta\mapsto \psi(\beta,\beta'\zb)$ is three times continuously differentiable.
Moreover, set $\ln c:(0,1)^d \times \Theta\rightarrow \Rb, (\uu,\theta) \mapsto \ln c_\theta(\uu)$. Assume that $\nabla_{\uu}\nabla^2_{\theta} \ln c_\theta(\uu)$ exists on
$(0,1)^d \times \Theta$.
\end{assum}

\begin{assum}
\label{d0}
Let the functions on $(0,1)^d \times \Zc$ defined by
$$f(\uu,\zb)=\frac{\nabla_{\theta}c_{\theta}}{c_{\theta}}_{|\theta=\psi(\beta_0,\beta_0'\zb)} (\uu),\;
\text{and}
\;\hat{f}(\uu,\zb)=\frac{\nabla_{\theta}c_{\theta}}{c_{\theta}}_{|\theta=\hat{\psi}(\beta_0,\beta_0'\zb)} (\uu).$$
For almost every realization, the functions $f$ and $\hat f$ belong to a Donsker class for the underlying law of $(\XX,\ZZ)$, that is denoted by $\Fc_1$.

\end{assum}

\begin{assum}
\label{d6} Let the functions on $\Zc$ defined by
$$p:\zb\mapsto p(\zb)=\nabla_{\beta}\psi(\beta,\beta'\zb)_{|\beta=\beta_0},\; \text{ and}\;
\hat{p}:\zb\mapsto \hat{p}(\zb)=\nabla_{\beta}\hat{\psi}(\beta,\beta'\zb)_{|\beta=\beta_0}.$$
For almost every realization, the functions $p$ and $\hat p$ belong to a Donsker class for the underlying law of $(\XX,\ZZ)$, that is denoted by $\Fc_2$.

\end{assum}

\begin{assum}
\label{hessc}
Assume that
$ E\left[ \sup_{\theta \in \Theta} |\nabla^2_\theta \ln c_\theta (\UU_{\ZZ}) |.\1(\ZZ \in \Zc)\right] < +\infty$.
Moreover, for every $(\uu,\uu')\in (0,1)^{2d}$, we have
\begin{eqnarray}
\left|\nabla_{\theta} \ln c_{\theta}(\uu)-\nabla_{\theta}\ln c_{\theta'}(\uu)\right| &\leq & \Phi(\uu).|\theta-\theta'|, \label{lip1}\\
\left|\nabla^2_{\theta} \ln c_{\theta}(\uu)-\nabla^2_{\theta} \ln c_{\theta'}(\uu)\right| &\leq & \Phi(\uu).|\theta-\theta' |, \label{lip2}
\end{eqnarray}
for some function $\Phi$ s.t. $E[\Phi(\UU)]<\infty$.
Moreover, there exists a function $r_3$ in $\Rc_d$ s.t., for every $\uu\in (0,1)^d$,
$$\sup_{\theta\in \Theta}\left| \nabla^3_{\theta}\ln c_{\theta}(\uu) \right| \leq r_3(\uu),\;E\left[ r_3(\UU_{\ZZ}) \1(\ZZ\in \Zc)\right]<\infty.$$

\mds

\end{assum}

\begin{assum}
\label{lipschitz}
Assume that, for every $(\beta_1,\beta_2)\in \Bc^2$ and $j=1,2$,
$$\sup_{\zb \in \Zc}|\nabla^j_{\beta}\psi(\beta_1,\beta_1'\zb)-\nabla^j_{\beta}\psi(\beta_2,\beta_2'\zb)|\leq C.|\beta_1-\beta_2|,$$
where $C$ is a finite constant.
\end{assum}

\begin{assum}
\label{hess}
Assume that
\begin{eqnarray}
\sup_{\beta\in \Bc,\zb\in \Zc}\left| \psi(\beta,\beta'\zb)-\hat{\psi}(\beta,\beta'\zb)\right| &=& o_P(1), \label{unifcons1} \\
\sup_{\beta\in \Bc,\zb\in \Zc}\left|\nabla_{\beta} \psi(\beta,\beta'\zb)-\nabla_{\beta} \hat{\psi}(\beta,\beta'\zb)\right| &=& o_P(1), \label{unifcons2} \\
\sup_{\beta\in \Bc,\zb\in \Zc}\left|\nabla_{\beta}^2 \psi(\beta,\beta'\zb)-\nabla_{\beta}^2 \hat{\psi}(\beta,\beta'\zb)\right| &=& o_P(1).\label{unifcons3}
\end{eqnarray}
\end{assum}

\begin{assum}
\label{uchapeau}
For every $k=1,\ldots,d$, there exists a function $\Gamma_k\in \Rc_d$ s.t.
$$\sup_{\theta \in \Theta}\left|\partial_{u_k}\nabla_{\theta}( \ln c_\theta)(\uu)\right|
+ \sup_{\theta \in \Theta}\left|\partial_{u_k}\nabla^2_{\theta}( \ln c_\theta)(\uu)\right| \leq \Gamma_k(\uu),$$
$$ E\left[U_k^{\alpha}(1-U_k)^{\alpha}\Gamma_k (\UU_{\ZZ}).\1(\ZZ\in \Zc) \right]<\infty,$$
for some $\alpha \in [0,1[$.
\end{assum}

\begin{assum}
\label{dbis}
 Assume that
\begin{eqnarray*}
\sup_{\zb\in\Zc}|\hat{\psi}(\beta_0,\beta_0'\zb)-\psi(\beta_0,\beta_0'\zb)| &=& O_P(\eta_{1n}), \\
\sup_{\zb\in\Zc}|\hat{p}(\zb)-p(\zb)| &=& O_P(\eta_{2n}),
\end{eqnarray*}
with $\delta_n^{1-\alpha}\eta_{jn}=o(n^{-1/2})$, for $j=1,2$, $\eta^2_{1n}=o(n^{-1/2})$, and $\eta_{1n}\eta_{2n}=o(n^{-1/2})$.
\end{assum}

\begin{assum}
\label{sigma}
Assume that $\beta\mapsto M(\beta)$ is twice continuously differentiable. Its Hessian matrix at point $\beta_0$
is denoted by $\Sigma=\nabla_\beta^2 M(\beta_0)$, and is invertible.
\end{assum}
Simple calculations provide
$$ \Sigma = \frac{1}{\Pb(\ZZ\in \Zc)} E\left[ \left( \nabla_{\theta}(\ln c_{\theta})_{|\theta= \psi_i} (\UU_i)
\nabla^2_{\beta} \psi(\beta,\beta'\ZZ_i) +\nabla^2_{\theta}(\ln c_{\theta})_{|\theta=\psi_i}(\UU_i) \nabla_{\beta} \psi_i\nabla_{\beta'} \psi_i
 \right) \cdot \1(\ZZ_i\in \Zc)    \right].$$


\begin{assum}
\label{ToManageTrimming}
For any $\uu\in \Rb^d$, set
$$ g(\uu,\zb):=\sup_{\theta \in B(\theta_0(\zb),\eta_{1,n})}
\sup_{\vv \in B(\uu,2\delta_n)} |\nabla_\theta \ln c_\theta (\vv) |,$$
where $B(\uu,\delta)$ (resp. $B(\theta,\eta)$) denotes the closed ball of center $\uu$ (resp. $\theta$) and radius $\delta$ (resp. $\eta$).
Assume
\beq
\label{eq_ToManageTrimming}
\sup_{k=1,\ldots,d}E[g (\UU_i,\ZZ_i)  \cdot \1(\ZZ_i\in \Zc, |U_{i,k}-\nu_n | < 2\delta_n) ] = o(n^{-1/2}),
 \eeq
and similarly after having replaced $\nu_n$ by $1-\nu_n.$
\end{assum}
The latter assumption is usually satisfied with a lot of usual copula models. Broadly speaking and when $c_\theta$ is continuous wrt its arguments and $\theta$ itself, it means that
\begin{equation*}
 \delta_n\int |\nabla_\theta c_\theta(\uu_{-k},\nu_n | \zb)_{|\theta=\theta_0(\zb)}| . \1(\zb\in \Zc) \, d\uu_{-k} \, d\Pb_{\ZZ}(\zb)= o(n^{-1/2}),
\end{equation*}
and the same replacing $\nu_n$ by $1-\nu_n$. Obviously, we denote by $(\uu_{-k},\nu_n)$ the $d$-dimensional vector whose components are $u_j$, when $j\neq k$, and whose $k$-th component is $\nu_n$.

\subsection{Main results}

\begin{theorem}
\label{asr}
Under Assumptions \ref{A_trim} to \ref{ToManageTrimming},
\begin{eqnarray*}(\hat{\beta}-\beta_0) &=& -\Sigma^{-1}\cdot \frac{1}{n}\sum_{i=1}^n \omega_{i,n} \frac{\nabla_{\theta}c_{\theta}}{c_{\theta}}_{|\theta=\psi_i} (\hat\UU_i)\nabla_{\beta} \psi(\beta,\beta'\ZZ_i)_{|\beta=\beta_0}+o_P(n^{-1/2}).
\end{eqnarray*}
\end{theorem}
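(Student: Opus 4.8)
The plan is to follow the standard linearization of an M-estimator, the complications coming from the plug-in quantities $\hat\psi$, $\hat p$ and $\hat\UU$. With the notation of the proof of Theorem~\ref{Th_consistency}, $\hat\beta$ maximizes $M_n(\beta)=(n_i+1)^{-1}\sum_{i=1}^n\hat\omega_{i,n}\ln c_{\hat\psi(\beta,\beta'\ZZ_i)}(\hat\UU_i)$ and, by Theorem~\ref{Th_consistency}, lies in the interior of $\Bc$ for $n$ large, so $\nabla_\beta M_n(\hat\beta)=0$. Taylor-expanding $\beta\mapsto\nabla_\beta M_n(\beta)$ about $\beta_0$ and rearranging gives, for some $\bar\beta$ on $[\beta_0,\hat\beta]$,
$$\hat\beta-\beta_0=-[\nabla^2_\beta M_n(\bar\beta)]^{-1}\,\nabla_\beta M_n(\beta_0),$$
so the two tasks are: (i) show $\nabla^2_\beta M_n(\bar\beta)\to\Sigma$ in probability, and (ii) expand $\nabla_\beta M_n(\beta_0)$.

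For (i): by consistency $\bar\beta\to\beta_0$, and by Assumption~\ref{hess} the maps $\hat\psi,\nabla_\beta\hat\psi,\nabla^2_\beta\hat\psi$ converge uniformly on $\Bc\times\Zc$ to $\psi,\nabla_\beta\psi,\nabla^2_\beta\psi$. Differentiating $\ln c_{\hat\psi(\beta,\beta'\ZZ_i)}(\hat\UU_i)$ twice by the chain rule, using~(\ref{lip1})--(\ref{lip2}) and Assumption~\ref{lipschitz} to control the increments in $\theta$ and in $\beta$, replacing $\hat\UU_i$ by $\UU_i$ at cost $o_P(1)$ via $\sup_i|\hat\UU_i-\UU_i|\1(\ZZ_i\in\Zc)=O_P(\delta_n)$ together with the reproducing u-shaped envelopes of Assumptions~\ref{hessc} and~\ref{uchapeau} (exactly as in the proof of Theorem~\ref{Th_consistency}), and then invoking a uniform law of large numbers together with $n_i/n\to\Pb(\ZZ\in\Zc)$ a.s., one obtains $\nabla^2_\beta M_n(\bar\beta)=\Sigma+o_P(1)$; Assumption~\ref{sigma} then yields $[\nabla^2_\beta M_n(\bar\beta)]^{-1}=\Sigma^{-1}+o_P(1)$.

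For (ii): the chain rule gives $\nabla_\beta M_n(\beta_0)=(n_i+1)^{-1}\sum_i\hat\omega_{i,n}\,\frac{\nabla_{\theta}c_{\theta}}{c_{\theta}}_{|\theta=\hat\psi_i}(\hat\UU_i)\,\hat p(\ZZ_i)$, and I would reduce this to the announced leading term by a chain of replacements, each producing an $o_P(n^{-1/2})$ remainder. First, replace $\hat\omega_{i,n}$ by $\omega_{i,n}$: since $\sup_i|\hat\UU_i-\UU_i|=O_P(\delta_n)$ and $\delta_n=o(\nu_n)$, the difference is supported on indices with some $U_{i,k}$ within $2\delta_n$ of $\nu_n$ or $1-\nu_n$, and Markov's inequality together with Assumption~\ref{ToManageTrimming} makes this error $o_P(n^{-1/2})$. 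Next, expand $\frac{\nabla_{\theta}c_{\theta}}{c_{\theta}}_{|\theta=\hat\psi_i}(\hat\UU_i)=\frac{\nabla_{\theta}c_{\theta}}{c_{\theta}}_{|\theta=\psi_i}(\hat\UU_i)+\nabla^2_\theta\ln c_{\theta^*_i}(\hat\UU_i)(\hat\psi_i-\psi_i)$ and $\hat p(\ZZ_i)=p(\ZZ_i)+(\hat p-p)(\ZZ_i)$, and collect the pieces. The term pairing $\frac{\nabla_{\theta}c_{\theta}}{c_{\theta}}_{|\theta=\psi_i}$ with $(\hat p-p)(\ZZ_i)$: after replacing $\hat\UU_i$ by $\UU_i$ at cost $O_P(\delta_n^{1-\alpha}\eta_{2n})=o_P(n^{-1/2})$ (Assumptions~\ref{uchapeau} and~\ref{dbis}) it equals $n^{-1}\sum_i\omega_i f(\UU_i,\ZZ_i)(\hat p-p)(\ZZ_i)$, with $f$ as in Assumption~\ref{d0}; since $E[f(\UU_{\ZZ},\ZZ)\mid\ZZ]=0$ and $\hat p-p$ ranges in the Donsker class $\Fc_2$ with $\|\hat p-p\|_\infty=o_P(1)$ (Assumption~\ref{d6} and~(\ref{unifcons2})), the $\Pb$-part of the decomposition $\Pb_n=(\Pb_n-\Pb)+\Pb$ vanishes identically and the $(\Pb_n-\Pb)$-part is $o_P(n^{-1/2})$ by asymptotic equicontinuity. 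The term pairing $\nabla^2_\theta\ln c$ with $(\hat\psi_i-\psi_i)$ is treated along the same lines — replace $\theta^*_i$ by $\psi_i$ (cost $O_P(\eta_{1n}^2)$ via~(\ref{lip2})) and $\hat\UU_i$ by $\UU_i$ (cost $O_P(\delta_n^{1-\alpha}\eta_{1n})$ via Assumption~\ref{uchapeau}), then split $\nabla^2_\theta\ln c_{\psi_i}(\UU_i)$ into its $\ZZ_i$-conditional mean and a conditionally centered part, the latter giving $o_P(n^{-1/2})$ as above and the former controlled through the rate bounds $\eta_{1n}^2=o(n^{-1/2})$, $\eta_{1n}\eta_{2n}=o(n^{-1/2})$, the bias rate of $\hat\psi$, and the Donsker property of $\Fc_1$. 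The purely bilinear $(\hat\psi_i-\psi_i)(\hat p-p)(\ZZ_i)$ piece is $O_P(\eta_{1n}\eta_{2n})=o_P(n^{-1/2})$. Collecting the surviving term and absorbing the deterministic normalization factors via $n_i/n\to\Pb(\ZZ\in\Zc)$ a.s. and the $O_P(n^{-1/2})$ size of the score yields the stated identity.

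The main obstacle is step (ii), and within it the control of the first-step contributions: because the pilot $\hat\psi$ is a plug-in rather than a profiled-likelihood maximizer, the underlying moment condition is not Neyman-orthogonal in $\psi$, so there is no automatic cancellation, and one must rely on (a) the conditional-mean-zero property of the copula score $\nabla_\theta\ln c_\theta$, which is exactly what makes the first-order $(\hat p-p)$ perturbation project out, and (b) the delicate balance of rates in Assumption~\ref{dbis} ($\delta_n^{1-\alpha}\eta_{jn}$, $\eta_{1n}^2$, $\eta_{1n}\eta_{2n}$ all $o(n^{-1/2})$), together with the Donsker conditions of Assumptions~\ref{d0} and~\ref{d6}, to dispose of the remaining second-order and bias pieces stemming from $\hat\psi-\psi$. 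The secondary difficulty is the bookkeeping at the moving trimming frontier $\{\nu_n\}$, which is precisely what Assumption~\ref{ToManageTrimming} and the condition $\delta_n=o(\nu_n)$ are built to handle.
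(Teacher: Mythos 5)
Your overall architecture matches the paper's: Taylor expansion of the first-order condition, convergence of the Hessian to $\Sigma$, switch from $\hat\omega_{i,n}$ to $\omega_{i,n}$ via Assumption~\ref{ToManageTrimming}, and a decomposition of $\nabla_\beta M_n(\beta_0)$ into the retained term plus three remainders (score at $\psi_i$ times $\hat p-p$; score-difference at $\hat\psi_i$ vs $\psi_i$ times $p$; the bilinear cross term), each handled by Donsker/asymptotic-equicontinuity arguments. Your treatment of the $\hat p-p$ remainder (conditional-mean-zero of the copula score, i.e. $E[f(\UU_{\ZZ},\ZZ)\mid\ZZ]=0$, kills the $\Pb$-part) and of the bilinear term ($O_P(\eta_{1n}\eta_{2n})$) is exactly the paper's.

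There is, however, a genuine gap in your treatment of the $R_{2n}$-type term $\frac1n\sum_i\{\nabla_\theta\ln c_{\hat\psi_i}-\nabla_\theta\ln c_{\psi_i}\}(\UU_i)\,p(\ZZ_i)\,\omega_{i,n}$. After the equicontinuity step, what remains is the population integral of the score difference times $\nabla_\beta\psi(\beta,\beta'\zb)_{|\beta=\beta_0}$, and this does \emph{not} vanish by centering of $\nabla^2_\theta\ln c$ nor by your listed rate bounds: the conditional-mean part you isolate is of order $\sup_{\zb}|\hat\psi-\psi|=O_P(\eta_{1n})$, and Assumption~\ref{dbis} only imposes $\eta_{1n}^2=o(n^{-1/2})$ and $\delta_n^{1-\alpha}\eta_{1n}=o(n^{-1/2})$ --- not $\eta_{1n}=o(n^{-1/2})$ (indeed, for the kernel-based Kendall's tau, $\eta_{1n}=\tilde h^{\tilde s}+[\log n]^{1/2}n^{-1/2}\tilde h^{-1/2}$ is slower than $n^{-1/2}$). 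The paper's resolution is structural, not rate-based: the $\uu$-integral $\phi_n(\zb)$ of the score difference depends on $\zb$ only through $\beta_0'\zb$ (single-index assumption), and Lemmas~\ref{tricky1}--\ref{tricky} give $\nabla_\beta\psi(\beta,\beta'\zb)_{|\beta=\beta_0}=\left[\zb-E[\ZZ\mid\beta_0'\ZZ=\beta_0'\zb]\right]\Lambda(\beta_0'\zb)$, hence $\int\nabla_\beta\psi(\beta,\beta'\zb)_{|\beta=\beta_0}\,d\Pb_{\ZZ\mid\beta_0'\ZZ=v}(\zb)=0$; conditioning on $\beta_0'\ZZ$ and applying Fubini then annihilates $R_{2n}$ entirely. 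You correctly diagnose that the criterion is not orthogonal in $\psi$ and that ``there is no automatic cancellation,'' but the mechanism that saves the day is precisely this projection identity for $\nabla_\beta\psi$ in single-index models, which your argument never invokes; without it the term is $O_P(\eta_{1n})$ and the claimed expansion fails.
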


\mds

\begin{proof}
By definition of $\hat{\beta},$
$\nabla_{\beta}M_n(\hat{\beta})=0.$ Next, a first order Taylor expansion leads to
$$-\nabla_{\beta}M_n(\beta_0)=(\hat{\beta}-\beta_0)\nabla^2_{\beta}M_n(\tilde{\beta}),$$
where $\tilde{\beta}=\beta_0+o_P(1)$, using the consistency of
$\hat{\beta}.$

From Lemma \ref{lemmahessian}, we have
$\nabla^2_{\beta}M_n(\tilde{\beta})=\nabla^2_{\beta}M(\tilde{\beta})+o_P(1).$ Moreover, from Assumption \ref{sigma} and the consistency of $\hat\beta$ (hence the consistency of $\tilde{\beta}$), we get $\nabla^2_{\beta}M_n(\tilde{\beta})=\Sigma+o_P(1).$

Next, we have
\begin{eqnarray*}
\nabla_{\beta}M_n(\beta_0) &=& \frac{1}{n}\sum_{i=1}^n \frac{\nabla_{\theta}c_{\theta}}{c_{\theta}}_{|\theta= \hat\psi_i} (\hat\UU_i)
\nabla_{\beta} \hat{\psi}(\beta,\beta'\ZZ_i)_{|\beta=\beta_0} \hat\omega_{i,n}.
\end{eqnarray*}

\mds

\textbf{a. Switch from the trimming functions $\hat\omega_{i,n}$ to $\omega_{i,n}$.}

\mds

Under Assumptions~\ref{A_unifapprox} and~\ref{ToManageTrimming}, we can
apply Lemma \ref{tricky2} with the function
$$ \chi(\UU_i,\ZZ_i):= \sup_{\theta \in B_{i,\theta}} \sup_{\vv \in B_{i,d}} |\nabla_\theta \ln c_{\theta}(\vv)|\cdot \sup_{\beta\in\Bc}\sup_{\zb\in\Zc} |\nabla_\beta \psi(\beta,\beta'\zb)|, \;\; \text{and}$$
$$ B_{i,\theta} := B(\theta_0(\ZZ_i),\eta_{1n}),\;\;B_{i,d}:=B(\UU_i,2\delta_n).$$
This implies
\begin{eqnarray*}
\nabla_{\beta}M_n(\beta_0) &=& \frac{1}{n}\sum_{i=1}^n \frac{\nabla_{\theta}c_{\theta}}{c_{\theta}}_{|\theta= \hat\psi_i} (\hat\UU_i)
\nabla_{\beta} \hat{\psi}(\beta,\beta'\ZZ_i)_{|\beta=\beta_0} \omega_{i,n}+ o_P(n^{-1/2}).
\end{eqnarray*}

\mds

Now, decompose
$$\nabla_{\beta}M_n(\beta_0) = A_{1n}+ A_{2n}+ R_{1n} +R_{2n}+R_{3n},$$
where
\begin{eqnarray*}
A_{1n} &:=&
 \frac{1}{n}\sum_{i=1}^n \frac{\nabla_{\theta}c_{\theta}}{c_{\theta}}_{|\theta=\psi_i} (\UU_i)\nabla_{\beta} \psi(\beta,\beta'\ZZ_i)_{|\beta=\beta_0}\omega_{i,n}, \\
 A_{2n} &:=& \frac{1}{n}\sum_{i=1}^n \left\{\frac{\nabla_{\theta}c_{\theta}}{c_{\theta}}_{|\theta=\psi_i} (\hat{\UU}_i)-\frac{\nabla_{\theta}c_{\theta}}{c_{\theta}}_{|\theta=\psi_i} (\UU_i)\right\}\nabla_{\beta} \psi(\beta,\beta'\ZZ_i)_{|\beta=\beta_0}\omega_{i,n}, \\
 R_{1n} &:=& \frac{1}{n}\sum_{i=1}^n \frac{\nabla_{\theta}c_{\theta}}{c_{\theta}}_{|\theta=\psi_i} (\hat{\UU}_i)\left\{\nabla_{\beta} \hat{\psi}(\beta,\beta'\ZZ_i)_{|\beta=\beta_0}-\nabla_{\beta} \psi(\beta,\beta'\ZZ_i)_{|\beta=\beta_0}\right\}\omega_{i,n}, \\
 R_{2n} &:=& \frac{1}{n}\sum_{i=1}^n \left\{\frac{\nabla_{\theta}c_{\theta}}{c_{\theta}}_{|\theta=\hat{\psi}_i} (\hat{\UU}_i)-\frac{\nabla_{\theta}c_{\theta}}{c_{\theta}}_{|\theta=\psi_i} (\hat{\UU}_i)\right\}\nabla_{\beta} \psi(\beta,\beta'\ZZ_i)_{|\beta=\beta_0}\omega_{i,n}, \\
  R_{3n} &:=& \frac{1}{n}\sum_{i=1}^n\left\{\frac{\nabla_{\theta}c_{\theta}}{c_{\theta}}_{|\theta=\hat{\psi}_i} (\hat{\UU}_i)-\frac{\nabla_{\theta}c_{\theta}}{c_{\theta}}_{|\theta=\psi_i} (\hat{\UU}_i)\right\}\left\{\nabla_{\beta} \hat{\psi}(\beta,\beta'\ZZ_i)_{|\beta=\beta_0}-\nabla_{\beta} \psi(\beta,\beta'\ZZ_i)_{|\beta=\beta_0}\right\}\omega_{i,n}.
\end{eqnarray*}
In this decomposition, we show that only the first two terms ($A_{1n}$ and $A_{2n}$) matter, and that
the $R_{jn}$, $j=1,2,3$, are $o_P(n^{-1/2})$.

\mds

\textbf{b. Study of $R_{1n}.$}

First observe that
$$R_{1n}=\frac{1}{n}\sum_{i=1}^n \frac{\nabla_{\theta}c_{\theta}}{c_{\theta}}_{|\theta=\psi_i} (\UU_i)\{\hat{p}(\ZZ_i)-p(\ZZ_i)\}\omega_{i,n}+R'_{1n},$$
$$R'_{1n}=\frac{1}{n}\sum_{i=1}^n \left\{ \frac{\nabla_{\theta}c_{\theta}}{c_{\theta}}_{|\theta=\psi_i} (\hat\UU_i) -
\frac{\nabla_{\theta}c_{\theta}}{c_{\theta}}_{|\theta=\psi_i} (\UU_i) \right\}
\{\hat{p}(\ZZ_i)-p(\ZZ_i)\}\omega_{i,n}.$$
By a limited expansion, we have
$$R'_{1n}=\frac{1}{n}\sum_{i=1}^n \left( \nabla_{\uu} \nabla_{\theta} (\ln c_{\theta})_{|\theta=\psi_i} (\UU_i^*).(\hat\UU_i - \UU_i) \right)
\{\hat{p}(\ZZ_i)-p(\ZZ_i)\}\omega_{i,n},$$
for some $\UU_i^*$ s.t. $|\UU_i^* - \UU_i| < |\hat \UU_i - \UU_i|$.
Reasoning as in the proof of Theorem~\ref{Th_consistency}, we can write
\bqa
\lefteqn{\Pb\left( n^{1/2}|R'_{1n}| > \ep  \right) \leq  \Pb(\sup_i |\hat\UU_i - \UU_i|> 2\delta_n,\ZZ_i\in \Zc)     }\\
&+& \Pb\left(  \frac{1}{n^{1/2}}\sum_{i=1}^n \left| \nabla^2_{\uu,\theta}  (\ln c_{\theta})_{|\theta=\psi_i} (\UU_i^*)\right|.|\hat\UU_i - \UU_i|
 \omega_{i,n}| \hat{p}(\ZZ_i)-p(\ZZ_i)|\cdot \1(|\hat\UU_i - \UU_i|\leq  2\delta_n)  >\ep  \right) \\
&\leq & \ep + \Pb\left(  \frac{\eta_{1n}}{n^{1/2}}\sum_{i=1}^n \sum_{k=1}^d\left| \Gamma_{k,1/2} (\UU_i)\right|.|\hat U_{i,k} - U_{i,k}|
 \omega_{i,n} \1(|\hat\UU_i - \UU_i|\leq  2\delta_n)  >\ep  \right)
\eqa
for $n$ large enough and invoking Assumption~\ref{dbis}.
Note that $$|\hat U_{i,k} - U_{i,k}|\1(|\hat\UU_i - \UU_i|\leq  2\delta_n)\omega_{i,n} \leq  C_\alpha U_{i,k}^{\alpha}(1-U_{i,k})^{\alpha}|\hat U_{i,k} - U_{i,k}|^{1-\alpha},\; \text{a.e.}$$
for some constant $C_\alpha$, when $n$ is sufficiently large, $i=1,\ldots,n$ and $k=1,\ldots,d$. This provides
\bqa
\Pb\left( n^{1/2}|R'_{1n}| > \ep  \right) & \leq & \ep + \Pb\left(  \frac{C'_\alpha\eta_{1n}\delta_n^{1-\alpha}}{n^{1/2}}\sum_{i=1}^n \sum_{k=1}^d  \Gamma_{k,1/2} (\UU_i) .
U^{\alpha}_{i,k}(1- U_{i,k})^{\alpha}
 \omega_{i}   >\ep  \right)  \\
 &\leq   & \ep + \frac{dC^{'}_\alpha n^{1/2}\eta_{1n}\delta_n^{1-\alpha}}{\ep} \sup_k E\left[ \Gamma_{k,1/2} (\UU_i) U^{\alpha}_{i,k}(1- U_{i,k})^{\alpha} \omega_{i}    \right]
\eqa
for some constant $C'_{\alpha}$.
Thanks to Assumption~\ref{uchapeau}, this means $\Pb\left( n^{1/2}|R'_{1n}| > \ep  \right) < 2\ep$ when $n$ is large enough, implying $R'_{1n}=o_P(n^{-1/2})$.

\mds

Moreover, with obvious notations, $R_{1n}$  can be rewritten as
$$R_{1n}=\frac{1}{n}\sum_{i=1}^n \left\{ \tilde{g}_n(\XX_i,\ZZ_i)-\tilde{g}(\XX_i,\ZZ_i)\right\}\omega_{i,n}+R'_{1n},$$
where $\tilde{g}_n$ and $\tilde{g}$ both belong to $\mathcal{F}_3=\Fc_1.\mathcal{F}_2.\mathcal{H},$ which is a Donsker class of functions. Indeed, the fact that $\mathcal{F}_3$ is a Donsker class follows from the permanence properties of Examples 2.10.10 and 2.10.7 in van der Vaart and Wellner (1996). Moreover, from Assumption \ref{dbis},
$$\sup_{\xx \in \Rb^d,\zb \in \Zc} |\tilde{g}_n(\xx,\zb)-\tilde g(\xx,\zb)|=o_P(1).$$ Therefore, the asymptotic equicontinuity of Donsker classes (see section 2.1.2 in van der Vaart and Wellner (1996) yields,
$$R_{1n}=\int \frac{\nabla_{\theta}c_{\theta}}{c_{\theta}}_{|\theta=\psi(\beta_0,\beta_0'\zb)}(\uu) \{\hat{p}(\zb)-p(\zb)\}\omega_{n}(\uu,\zb)d\mathbb{P}_{\left(\UU,\ZZ\right)}(\uu,\zb)+o_P(n^{-1/2}).$$

\mds

We can replace $\omega_n(\uu,\zb)$ above by $\1(\zb\in \Zc)$ if
$$\eta_{2n} \int |\nabla_{\theta} c_{\theta} (\uu)_{|\theta=\psi(\beta_0,\beta_0'\zb)} |
\cdot |\omega_{n}(\uu,\zb) - \omega_{\infty }(\uu,\zb)|\, d\uu \,d\mathbb{P}_{\ZZ}(\zb) = o(n^{-1/2}).$$
This is guaranteed under our assumption~\ref{ToManageTrimming}.

\mds

Then, under our assumptions, we can apply Fubini's theorem. This provides
\bqa
\lefteqn{\int \frac{\nabla_{\theta}c_{\theta}}{c_{\theta}}_{|\theta=\psi (\beta_0,\beta'_0\zb)} (\uu)\{\hat{p}(\zb)-p(\zb)\}\1(\zb\in\Zc)d\mathbb{P}_{\left(\UU,\ZZ\right)}(\uu,\zb) }\\
&=&  \int \{\hat{p}(\zb)-p(\zb)\} d\mathbb{P}_{\ZZ}(\zb) \left( \int \frac{\nabla_{\theta}c_{\theta}}{c_{\theta}}_{|\theta=\psi(\beta_0,\beta'_0\zb)} (\uu) \, \1(\zb\in\Zc)d\mathbb{P}_{\left(\UU |\ZZ=\zb\right)}(\uu) \right)  =0,
\eqa
by definition of $\psi (\beta_0,\beta_0'\zb),$ which maximizes $E[\ln c_{\theta}(\UU_{\zb})|\ZZ=\zb]$ with respect to $\theta$, for any $\zb\in \Zc$.
This shows that $R_{1n}=o_P(n^{-1/2}),$ and is therefore negligible.

\mds

\textbf{c. Study of $R_{2n}.$}

Write, from Assumption \ref{uchapeau} and with obvious notations,
\beq
R_{2n}= \frac{1}{n}\sum_{i=1}^n \left\{\frac{\nabla_{\theta}c_{\theta}}{c_{\theta}}_{|\theta=\hat{\psi}_i} (\UU_i)-\frac{\nabla_{\theta}c_{\theta}}{c_{\theta}}_{|\theta=\psi_i} (\UU_i)\right\}\nabla_{\beta} \psi(\beta,\beta'\ZZ_i)_{|\beta=\beta_0}\omega_{i,n} + R'_{2n},
\label{r2}
\eeq
where
\begin{eqnarray*}
R'_{2n} &=& \frac{1}{n}\sum_{i=1}^n \left\{\frac{\nabla_{\theta}c_{\theta}}{c_{\theta}}_{|\theta=\hat{\psi}_i} (\hat\UU_i)-
\frac{\nabla_{\theta}c_{\theta}}{c_{\theta}}_{|\theta=\psi_i} (\hat\UU_i)\right\}\nabla_{\beta} \psi(\beta,\beta'\ZZ_i)_{|\beta=\beta_0}\omega_{i,n} \\
&-&
\frac{1}{n}\sum_{i=1}^n \left\{\frac{\nabla_{\theta}c_{\theta}}{c_{\theta}}_{|\theta=\hat\psi_i} (\UU_i)-
\frac{\nabla_{\theta}c_{\theta}}{c_{\theta}}_{|\theta=\psi_i} (\UU_i)\right\}\nabla_{\beta} \psi(\beta,\beta'\ZZ_i)_{|\beta=\beta_0}\omega_{i,n} \\
&=& \frac{1}{n}\sum_{i=1}^n \left\{\nabla^2_{\theta} (\ln c_{\theta})_{|\theta=\psi_i} (\hat\UU_i)-
\nabla^2_{\theta} (\ln c_{\theta})_{|\theta=\psi_i} (\UU_i)\right\}.(\hat \psi_i -\psi_i) \nabla_{\beta} \psi(\beta,\beta'\ZZ_i)_{|\beta=\beta_0}\omega_{i,n} \\
&+& \frac{1}{2n}\sum_{i=1}^n \left\{\nabla^3_{\theta} (\ln c_{\theta})_{|\theta=\psi^*_i} (\hat\UU_i)-
\nabla^3_{\theta} (\ln c_{\theta})_{|\theta=\tilde\psi_i} (\UU_i)\right\}.(\hat \psi_i -\psi_i)^{(2)} \nabla_{\beta} \psi(\beta,\beta'\ZZ_i)_{|\beta=\beta_0}\omega_{i,n} \\
&=& \frac{1}{n}\sum_{i=1}^n \nabla_{\uu} \nabla^2_{\theta} (\ln c_{\theta})_{|\theta=\psi_i} (\UU^*_i).(\hat\UU_i - \UU_i)
.(\hat \psi_i -\psi_i) \nabla_{\beta} \psi(\beta,\beta'\ZZ_i)_{|\beta=\beta_0}\omega_{i,n} \\
&+& O_P\left( \sup_i|\hat \psi_i -\psi_i |^{2}  \right),
\end{eqnarray*}
for some $\UU_i^*$, $\psi^*_i$ and $\tilde\psi_i$ s.t. $|\UU_i^* - \UU_i|<|\hat \UU_i - \UU_i|$, $|\psi^*_i - \psi_i|<|\hat\psi_i - \psi_i|$
and $|\tilde\psi_i - \psi_i|<|\hat\psi_i - \psi_i|$. Note that we have invoked Assumption~\ref{hessc} to bound the last term on the r.h.s. in probability.
The main term on the r.h.s. is $O_P(\eta_{1n}\delta_n^{1-\alpha})=o_P(n^{-1/2})$ from Assumptions~\ref{uchapeau} and~\ref{dbis} (mimic the treatment of $R'_{1n}$ as above). We deduce
$ R'_{2n} = o_P(n^{-1/2})$.

\mds

Next, invoking assumptions~\ref{dbis} and~\ref{uchapeau}, the first term on the right-hand side of (\ref{r2}) can be rewritten as
$$\frac{1}{n}\sum_{i=1}^n \{h_n(\UU_i,\ZZ_i)-h(\UU_i,\ZZ_i)\}\omega_{i,n},$$
where
$\sup_{\uu,\zb}|h_n(\uu,\zb)-h(\uu,\zb)|=o_P(1),$ and $h_n$ and $h$ both belong to $\mathcal{F}_4=p.\mathcal{H}.\mathcal{F}_1,$ as a consequence of Assumption \ref{d0}. This is a Donsker class from Example 2.10.10 in van der Vaart and Wellner (1996). The asymptotic equicontinuity of the Donsker class $\mathcal{F}_4$ allows to write
\bqa
\lefteqn{ R_{2n}=\int \left\{\frac{\nabla_{\theta}c_{\theta}}{c_{\theta}}_{|\theta=\hat{\psi}(\beta_0,\beta'_0\zb)} (\uu)-\frac{\nabla_{\theta}c_{\theta}}{c_{\theta}}_{|\theta=\psi(\beta_0,\beta'_0\zb)} (\uu)\right\}\nabla_{\beta} \psi(\beta,\beta'\zb)_{|\beta=\beta_0}   }\\
&\cdot & \omega_{n}(\uu,\zb)d\mathbb{P}_{\left(\UU,\ZZ\right)}(\uu,\zb)+o_P(n^{-1/2}).\hspace{5cm}
\eqa
Decompose $\omega_{n}(\uu,\zb)$ as $\omega_{\nu}(\uu)\omega_{M}(\zb),$ where $\omega_{\nu}(\uu)=\mathbf{1}_{\min_k \min (1-u_k,u_k)\geq \nu_n},$ and
$\omega_M(\zb)=\mathbf{1}_{|\zb|\leq M}.$
The function
$$\phi_n(\zb)=\int \left\{\frac{\nabla_{\theta}c_{\theta}}{c_{\theta}}_{|\theta=\hat{\psi} (\beta_0,\beta'_0\zb)} (\uu)-\frac{\nabla_{\theta}c_{\theta}}{c_{\theta}}_{|\theta=\psi (\beta_0,\beta'_0\zb)} (\uu)\right\}\omega_{\nu}(\uu)d\mathbb{P}_{\left(\UU|\ZZ=\zb\right)}(\uu),$$
is a function of $\beta_0'\zb$ only. This is due to the fact that the distribution of $\UU$ given $\ZZ$ only depends on $\beta_0'\ZZ,$ due to the single-index assumption. With a slight abuse in notations, denote $\phi_n(\zb)=\phi_n(\beta_0'\zb).$ This leads to
$$R_{2n}=\int \phi_n(v) \left[\int \nabla_{\beta} \psi(\beta,\beta'\zb)_{|\beta=\beta_0} \omega_{M}(\zb)d\mathbb{P}_{\left(\ZZ|\beta_0'\ZZ\right)}(\zb|v)\right]d\mathbb{P}_{\beta_0'\ZZ}(v)+o_P(n^{-1/2}).$$
Next, as a consequence of Lemma \ref{tricky}, use that
$$\int \nabla_{\beta} \psi(\beta,\beta'\zb)_{|\beta=\beta_0} \omega_{M}(\zb)d\mathbb{P}_{\left(\ZZ|\beta_0'\ZZ=v\right)}(\zb)=0.$$
This implies $R_{2n}=o_P(n^{-1/2})$.

\mds

\textbf{d. Study of $R_{3n}.$}
By the same reasoning as for $R_{2n}$, we get
\begin{eqnarray*}
\lefteqn{ R_{3n}=\frac{1}{n}\sum_{i=1}^n \left\{\frac{\nabla_{\theta}c_{\theta}}{c_{\theta}}_{|\theta=\hat{\psi}_i} (\UU_i)-\frac{\nabla_{\theta}c_{\theta}}{c_{\theta}}_{|\theta=\psi_i} (\UU_i)\right\}   }\\
& \cdot  & \left\{ \nabla_{\beta} \hat\psi(\beta,\beta'\ZZ_i)_{|\beta=\beta_0} - \nabla_{\beta} \psi(\beta,\beta'\ZZ_i)_{|\beta=\beta_0} \right\}
\omega_{i,n}+o_P(n^{-1/2}).
\end{eqnarray*}
Due to Assumption \ref{dbis}
and Assumption \ref{hessc} (see equation (\ref{lip1})), we obtain $R_{3n}= o_P(n^{-1/2})$.
\end{proof}

\mds

Now, we need to introduce the way we estimate $\UU_i$ by pseudo-observations $\hat\UU_i$.
Therefore, additional assumptions are required to achieve asymptotic normality.

\begin{assum}
\label{my_hatF}
For every $k=1,\ldots,d$, $x \in \Rb$ and $\zb\in\Zc$, we can write
\beq
 \hat F_{k}(x|\zb) - F_{k}(x|\zb) = \frac{1}{n} \sum_{j=1 }^n a_{k,n}(\XX_j,\ZZ_j,x,\zb) +
 r_n(x,\zb),
 \label{condHatF2}
 \eeq
for some
particular functions $a_{k,n}$ and for some sequence $(r_n)$ s.t.
$$\sup_{x\in \Rb}\sup_{\zb\in \Zc} |r_n(x,\zb)| := r_{n,\infty} = o_P(n^{-1/2}).$$
\end{assum}

The latter assumption implies that, for every $i=1,\ldots,n$ and $k=1,\ldots,d$,
$$ \hat U_{i,k} - U_{i,k} =  \frac{1}{n} \sum_{j=1 }^n a_{k,n}(\XX_j,\ZZ_j,X_{i,k},\ZZ_i) +
 r_{n,i},\; n^{1/2}\sup_i |r_{n,i}|=o_P(1).  $$
Denote $\aaa_{n}(\XX_j,\ZZ_j,\XX_{i},\ZZ_i)$, or $\aaa_{i,j}$ even shorter, the $d$-vector whose components are $a_{k,n}(\XX_j,\ZZ_j,X_{i,k},\ZZ_i)$, $k=1,\ldots,d$.

\mds

In the case of the kernel-based estimates $\hat F_k$ of Lemma~\ref{my_Fk},
Assumption~\ref{my_hatF} is satisfied by using $s$-order kernels $\KK$ s.t. $\sup_k h_k=o(n^{-1/(2s)})$ and $n^{1/2} \prod_{k=1}^p h_k>> n^{a}$ for some $a>0$.
If $h_k=n^{-\pi}$ for all $k$, this necessitates $s> p$ and $\pi \in ]1/(2s);1/(2p)[$.

\begin{assum}
\label{bias}
Define $ \Lambda_{\psi(\beta_0,\beta_0'\zb)} := \nabla_{\uu}\nabla_{\theta} (\ln c_{\theta})_{|\theta = \psi(\beta_0,\beta_0'\zb)}$, and assume that
\beq
r_{n,\infty} E[ |\Lambda_{\psi(\beta_0,\beta_0'\ZZ_i)}(\UU_i)| \omega_{n}(\UU_i,\ZZ_i) ] =o(n^{-1/2}).
\label{Int_cond_nablau}
\eeq
Assume that there exists a function $W$ such that
$$\sup_{\xx\in \Rb^d,\zb\in \Zc}\left|E\left[a_{n}(\XX_j,\ZZ_j,\xx,\zb) \right]-W(\zb,\xx)\right|:=W_{n,\infty}=o(n^{-1/2}),$$
and such that
\beq
W_{n,\infty}E\left[\left|\Lambda_{\psi(\beta_0,\beta_0'\ZZ_i)}(\UU_i).W(\ZZ,\XX) .\nabla_{\beta}\psi(\beta,\beta'\ZZ_i)_{|\beta=\beta_0}\right| \omega_{i,n} \right]<\infty.
\label{Int_cond_nablau_W}
\eeq
\end{assum}

\mds

Choosing the kernel-based estimates $\hat F_k$ of Lemma~\ref{my_Fk}, we see that $ E\left[a_{n}(\XX_j,\ZZ_j,\xx,\zb) \right]=W(\zb,\xx)=0,$
and Assumption~\ref{bias} is automatically satisfied. This is most often the case with parametric marginal models too.

\mds

Moreover,~(\ref{Int_cond_nablau}) and~(\ref{Int_cond_nablau_W}) are often easily satisfied when
$E[ |\Lambda_{\psi(\beta_0,\beta_0'\ZZ_i)}(\UU_i)| .\1(\ZZ_i\in \Zc) ] <\infty.$ Note that the Gaussian copula model does not fulfill the latter condition. Nonetheless,
Assumption~\ref{bias} will be satisfied with a convenient choice of bandwidths, kernels and trimming sequences (see Subsection~\ref{Ex_contd}).

\begin{assum}
\label{deriv_u2}
For every $k=1,\ldots,d$, there exists a function $\zeta_k\in \Rc_d$ s.t.
$$\sup_{\theta \in \Theta}\left|\partial^2_{u_k}\nabla_{\theta}( \ln c_\theta)(\uu)\right| \leq \zeta_k(\uu),\; \text{and}$$
$$ E\left[U_k^{\gamma}(1-U_k)^{\gamma}\zeta_k (\UU_{\ZZ}).\1(\ZZ\in \Zc) \right]<\infty,$$
for some $\gamma \in [0,1]$. Moreover, $\delta_n^{2-\gamma}=o(n^{-1/2})$.
\end{assum}


\begin{assum}
\label{uprocess2}
Assume that
 $$v^2_n:=E\left[ | \aaa_{n}(\XX_j,\ZZ_j,\XX_{i},\ZZ_i) - E[ \aaa_{n}(\XX_j,\ZZ_j,\XX_{i},\ZZ_i)\,|\, \XX_i,\ZZ_i] |^2 \right] < \infty,$$
and that
$$ \frac{v_n^2}{n} E\left[\left|\Lambda_{\psi(\beta_0,\beta_0'\ZZ_i)}(\UU_i)\right|^2\omega_{i,n} \right]=o(1).$$
\end{assum}

\begin{corollary}
\label{ouf}
Under Assumptions \ref{A_trim} to \ref{uprocess2}, we have
$$n^{1/2}\left\{\Sigma.(\hat{\beta}-\beta_0)+b_n \right\}\Longrightarrow \mathcal{N}(0,S),$$
where $S=E[\omega_1\mathcal{M}_1\mathcal{M}_1'],$
where
$$\Mc_1=\frac{\nabla_{\theta}c_{\theta}}{c_{\theta}}_{|\theta=\psi_1} (\UU_1)\nabla_{\beta} \psi(\beta,\beta'\ZZ_1)_{|\beta=\beta_0}+
\Lambda_{\psi(\beta_0,\beta_0'\ZZ_1)}(\UU_1).W(\ZZ_1,\XX_1) \nabla_{\beta}\psi(\beta,\beta'\ZZ_1)_{|\beta=\beta_0},$$
$$ b_n = E[(\omega_{1,n}-\omega_i) \Mc_1]=E[\1(\UU_1\in [0,1]^d \ \Ec_n,\ZZ_1\in \Zc) \Mc_1].$$
Moreover, if
\bqan
\lefteqn{
E\left[   \Lambda_{\psi(\beta_0,\beta_0'\ZZ_1)}(\UU_1).W(\ZZ_1,\XX_1) \nabla_{\beta}\psi(\beta,\beta'\ZZ_1)_{|\beta=\beta_0} \right. \nonumber }\\
& \cdot & \left.
\left\{ \1( |U_{k,1} - \nu_n|<\delta_n) + \1( |1-U_{k,1} - \nu_n|<\delta_n) \right\} \right] = o(n^{-1/2}),
\label{cond_no_bias}
\eqan
for every $k=1,\ldots,d$, then $n^{1/2} b_n=o(1)$ and $n^{1/2}(\hat{\beta}-\beta_0) \Longrightarrow \mathcal{N}(0,\Sigma^{-1} S \Sigma^{-1}).$
\end{corollary}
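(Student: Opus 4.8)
The plan is to turn the linearization of Theorem~\ref{asr},
$$\Sigma\,(\hat{\beta}-\beta_0)=-\frac{1}{n}\sum_{i=1}^n\omega_{i,n}\,\frac{\nabla_{\theta}c_{\theta}}{c_{\theta}}_{|\theta=\psi_i}(\hat\UU_i)\,\nabla_{\beta}\psi(\beta,\beta'\ZZ_i)_{|\beta=\beta_0}+o_P(n^{-1/2}),$$
into an i.i.d.\ average of the vectors $\Mc_i$ plus the trimming bias $b_n$, and then to invoke a triangular-array central limit theorem (writing $\nabla_\beta\psi_i=\nabla_\beta\psi(\beta,\beta'\ZZ_i)_{|\beta=\beta_0}$, as in the expression for $\Sigma$). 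First I would Taylor-expand $\uu\mapsto(\nabla_{\theta}c_{\theta}/c_{\theta})_{|\theta=\psi_i}(\uu)$ about $\UU_i$ to first order, with quadratic integral remainder $\rho_i$. By~(\ref{unif_uu}) the event $\{|\hat\UU_i-\UU_i|\le 2\delta_n\}$ holds simultaneously for all $i$ once $n$ is large, a.e.; on it, mimicking the treatment of $R'_{1n}$ in the proof of Theorem~\ref{asr}, one gets $|\rho_i|\le C\sum_k\zeta_{k,1/2}(\UU_i)\,U_{i,k}^{\gamma}(1-U_{i,k})^{\gamma}\,\delta_n^{2-\gamma}$ using $\zeta_k\in\Rc_d$ from Assumption~\ref{deriv_u2}. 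Multiplying by $\omega_{i,n}\nabla_\beta\psi_i$, averaging and using $\delta_n^{2-\gamma}=o(n^{-1/2})$ shows that the $\rho_i$-contribution is $o_P(n^{-1/2})$, so only the first-order term involving $\Lambda_{\psi_i}(\UU_i)$ survives.

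Next I would substitute the stochastic expansion of the pseudo-observations. By Assumption~\ref{my_hatF}, $\hat\UU_i-\UU_i=n^{-1}\sum_j\aaa_{i,j}+r_{n,i}$ with $n^{1/2}\sup_i|r_{n,i}|=o_P(1)$, and the $r_{n,i}$-part of $n^{-1}\sum_i\omega_{i,n}\Lambda_{\psi_i}(\UU_i)(\hat\UU_i-\UU_i)\nabla_\beta\psi_i$ is $o_P(n^{-1/2})$ by~(\ref{Int_cond_nablau}). This leaves the $V$-statistic $n^{-2}\sum_{i,j}\omega_{i,n}\Lambda_{\psi_i}(\UU_i)\aaa_{i,j}\nabla_\beta\psi_i$, which I would split by a Hájek-type decomposition $\aaa_{i,j}=W(\ZZ_i,\XX_i)+\{E[\aaa_{i,j}\,|\,\XX_i,\ZZ_i]-W(\ZZ_i,\XX_i)\}+\{\aaa_{i,j}-E[\aaa_{i,j}\,|\,\XX_i,\ZZ_i]\}$: the first block produces $n^{-1}\sum_i\omega_{i,n}\Lambda_{\psi_i}(\UU_i)W(\ZZ_i,\XX_i)\nabla_\beta\psi_i$; the second is bounded in sup-norm by $W_{n,\infty}$ and is $o_P(n^{-1/2})$ by~(\ref{Int_cond_nablau_W}); the diagonal $i=j$ contributes $O_P(n^{-1})$; and the degenerate block has mean zero conditionally on the $i$-th observation, so its second moment is $O\big(v_n^2 n^{-2}E[|\Lambda_{\psi_i}(\UU_i)|^2\omega_{i,n}]\big)$, hence $o(n^{-1})$ by Assumption~\ref{uprocess2}. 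Collecting everything, $\Sigma(\hat{\beta}-\beta_0)=-n^{-1}\sum_i\omega_{i,n}\Mc_i+o_P(n^{-1/2})$, with $\Mc_i$ as in the statement.

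For the limit law, write $\omega_{i,n}\Mc_i=(\omega_{i,n}\Mc_i-E[\omega_{i,n}\Mc_i])+E[\omega_{i,n}\Mc_i]$ and note $E[\omega_{i,n}\Mc_i]=E[\omega_i\Mc_i]+b_n=b_n$, since the copula-score part of $\Mc_i$ has conditional mean zero given $\ZZ_i$ (because $\psi(\beta_0,\beta_0'\zb)$ maximizes $\theta\mapsto E[\ln c_\theta(\UU_\zb)\,|\,\ZZ=\zb]$) and the $W$-part inherits the mean-zero structure of the influence functions $a_{k,n}$. For the centered triangular array, $\Var(\omega_{i,n}\Mc_i)\to S=E[\omega_1\Mc_1\Mc_1']$ by dominated convergence (using $\omega_{i,n}\le\omega_i$, $\omega_{i,n}\to\omega_i$ a.e.\ and $E[\omega_1|\Mc_1|^2]<\infty$), and the Lindeberg condition follows from the same domination; hence $n^{1/2}\{\Sigma(\hat{\beta}-\beta_0)+b_n\}\Longrightarrow\Nc(0,S)$. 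Finally, under~(\ref{cond_no_bias}): from $\omega_{1,n}-\omega_1=-\1(\UU_1\in[0,1]^d\setminus\Ec_n,\ZZ_1\in\Zc)$ and $\1(\UU_1\notin\Ec_n)\le\sum_k\{\1(U_{1,k}<\nu_n)+\1(U_{1,k}>1-\nu_n)\}$, the $\Lambda W\nabla_\beta\psi$-part of $b_n$ is $o(n^{-1/2})$ by~(\ref{cond_no_bias}), while the copula-score part of $b_n$ equals, up to sign, $E\big[\1(\ZZ_1\in\Zc)\nabla_\beta\psi_1\int_{[0,1]^d\setminus\Ec_n}\nabla_\theta c_{\psi_1}(\uu)\,d\uu\big]$, which is $o(n^{-1/2})$ by the boundary-layer estimates already underlying Assumption~\ref{ToManageTrimming} and by first-order optimality; thus $n^{1/2}b_n=o(1)$, and Slutsky's lemma gives $n^{1/2}(\hat{\beta}-\beta_0)\Longrightarrow\Nc(0,\Sigma^{-1}S\Sigma^{-1})$.

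The hard part will be the $V$-statistic produced by the pseudo-observations: one has to extract the right i.i.d.\ linearization $\Lambda_{\psi_i}(\UU_i)W(\ZZ_i,\XX_i)\nabla_\beta\psi_i$, show that the degenerate component is $o_P(n^{-1/2})$ notwithstanding the shrinking trimming region $\Ec_n$ — which is exactly what the moment control in Assumption~\ref{uprocess2} is designed for — and keep careful track of the triangular-array bias $b_n$ coming from the $n$-dependence of $\Ec_n$, separating the unavoidable copula-score boundary effect from the $W$-driven one handled by~(\ref{cond_no_bias}).
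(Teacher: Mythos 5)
Your proposal is correct and follows essentially the same route as the paper's proof: a first-order Taylor expansion of the score in $\uu$ with quadratic remainder controlled by Assumption~\ref{deriv_u2}, substitution of the i.i.d.\ representation of $\hat\UU_i-\UU_i$ with the remainder handled via~(\ref{Int_cond_nablau}), a Hájek/degenerate $U$-statistic split whose projection term is identified through~(\ref{Int_cond_nablau_W}) and whose degenerate part is killed by Assumption~\ref{uprocess2}, and the trimming bias $b_n$ removed under~(\ref{cond_no_bias}) together with Assumption~\ref{ToManageTrimming}. The only cosmetic difference is that you center the triangular array $\omega_{i,n}\Mc_i$ directly and verify a Lindeberg condition, whereas the paper isolates $B_n=n^{-1}\sum_i(\omega_{i,n}-\omega_i)\Mc_i$, notes that its variance is $O(n^{-1}\delta_n)$ so that $n^{1/2}(B_n-b_n)=o_P(1)$, and applies a standard CLT to the i.i.d.\ sum $n^{-1}\sum_i\omega_i\Mc_i$; the two devices are equivalent here.
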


Note that the bias $b_n$ cannot be removed in general, even if $E[\aaa_{i,j}]=0$.
Indeed, the trimming part $E[(\omega_{i,n}-\omega_i) \Mc_i]$ is of order $\delta_n$ typically, that has no reasons to be $o(n^{-1/2})$. To remove the asymptotic bias, we need~(\ref{cond_no_bias}). The latter condition is easily satisfied with purely parametric or nonparametric estimates, because $W(\ZZ,\XX)$ is zero or most often negligible in such cases.

\mds

\begin{proof}
We use the same notations as in the proof of Theorem \ref{asr}.
Recall that
$$A_{2,n}=\frac{1}{n}\sum_{i=1}^n\left\{\frac{\nabla_{\theta}c_{\theta}}{c_{\theta}}_{|\theta=\psi_i} (\hat{\UU}_i)-\frac{\nabla_{\theta}c_{\theta}}{c_{\theta}}_{|\theta=\psi_i} (\UU_i)\right\}\nabla_{\beta} \psi(\beta,\beta'\ZZ_i)_{|\beta=\beta_0}\omega_{i,n},$$
which can be rewritten as
\begin{eqnarray*} A_{2,n}&=&\frac{1}{n}\sum_{i=1}^n\Lambda_{\psi_i}(\UU_i) .[\hat{\UU}_i-\UU_i]\nabla_{\beta}\psi(\beta,\beta'\ZZ_i)_{|\beta=\beta_0}\omega_{i,n}
+O_P(\delta_n^{2-\gamma}) \\
&=:& A'_{2,n}+o_P(n^{-1/2}),
\end{eqnarray*}
thanks to  a limited expansion and invoking Assumptions~\ref{bias} and~\ref{deriv_u2}.
Next, under~(\ref{Int_cond_nablau}), we have
$$A'_{2,n}=\frac{1}{n^2}\sum_{j=1}^n \sum_{i=1}^n \Lambda_{\psi_i}(\UU_i).\aaa_{i,j} .\nabla_\beta \psi(\beta,\beta'\ZZ_i)_{|\beta=\beta_0}\omega_{i,n}+o_P(n^{-1/2}).$$
The leading term in $A'_{2,n}$ can be decomposed into $A'_{21}+A'_{22}$ where
\begin{eqnarray*}
A'_{21} &=& \frac{1}{n^2}\sum_{j=1}^n \sum_{i=1}^n \Lambda_{\psi_i}(\UU_i) .
E\left[\aaa_{i,j}|\ZZ_i,\XX_i\right]\nabla_{\beta}\psi(\beta,\beta'\ZZ_i)_{|\beta=\beta_0}\omega_{i,n}, \; \text{and}\\
A'_{22} &=& \frac{1}{n^2}\sum_{j=1}^n \sum_{i=1,i\neq j}^n \Lambda_{\psi_i}(\UU_i).
\left\{\aaa_{i,j} -E\left[ \aaa_{i,j}|\ZZ_i,\XX_i\right]\right\}.  \nabla_{\beta}\psi(\beta,\beta'\ZZ_i)_{|\beta=\beta_0}\omega_{i,n}.
\end{eqnarray*}
Due to Assumption \ref{bias}, Equation~(\ref{Int_cond_nablau_W}), we have
$$A'_{21}=\frac{1}{n}\sum_{i=1}^n \Lambda_{\psi_i}(\UU_i).W(\ZZ_i,\XX_i) \nabla_{\beta}\psi(\beta,\beta'\ZZ_i)_{|\beta=\beta_0}\omega_{i,n}+o_P(n^{-1/2}).$$
Next, observe that the main term of
$A'_{22}$ is of the form
$\sum_{i< j}\frak{U}(\ZZ_i,\XX_i,\ZZ_j,\XX_j)$, after symmetrization, where
$$E\left[\frak{U}(\ZZ_i,\XX_i,\ZZ_j,\XX_j)|\ZZ_j,\XX_j\right]=E\left[\frak{U}(\ZZ_i,\XX_i,\ZZ_j,\XX_j)|\ZZ_i,\XX_i\right]=0.$$
So, $A'_{22}$ is a degenerate $U-$process of order $2$.
It can be easily verified that its expectation is zero and
$$ Var(A'_{22}) =O\left(    \frac{v_n^2}{n^2} \cdot\int
| \Lambda_{\psi(\beta_0,\beta_0'\zb)}(\uu)|^2.
 |\nabla_{\beta}\psi(\beta,\beta'\zb)_{|\beta=\beta_0}|^2 \omega_{n}(\uu,\zb) \, d\Pb_{(\UU,\ZZ)}(\uu,\zb)
\right). $$
Under Assumptions~\ref{uprocess2}, we get $A'_{22}=o_P(n^{-1/2})$.


\mds

We have obtained
\begin{eqnarray*}
\lefteqn{
A_{1n}+A_{2n} =
\frac{1}{n}\sum_{i=1}^n \omega_{i,n} \frac{\nabla_{\theta}c_{\theta}}{c_{\theta}}_{|\theta=\psi_i} (\UU_i)\nabla_{\beta} \psi(\beta,\beta'\ZZ_i)_{|\beta=\beta_0} }\\
& + &  \frac{1}{n}\sum_{i=1}^n \omega_{i,n}\Lambda_{\psi_i}(\UU_i).W(\ZZ_i,\XX_i) \nabla_{\beta}\psi(\beta,\beta'\ZZ_i)_{|\beta=\beta_0}+o_P(n^{-1/2}) \\
& =: & n^{-1} \sum_{i=1}^n \omega_{i} \Mc_{i}+B_n+o_P(n^{-1/2}),
\end{eqnarray*}
by introducing a bias term $ B_n :=  n^{-1} \sum_{i=1}^n \{\omega_{i,n} -\omega_i\}\Mc_{i},$ due to the trimming procedure. Its expectation is denoted
by $ b_n=E[(\omega_{1,n}-\omega_i)\Mc_1] $, and its variance is $O(n^{-1}\delta_n)$.
The asymptotic bias is negligible under~(\ref{cond_no_bias}), by
recalling assumption~\ref{ToManageTrimming}, and then applying Lemma~\ref{tricky2}.

\mds
In every case, the result follows from a standard CLT, recalling the expansion of Theorem~\ref{asr}.
\end{proof}

\subsection{Examples cont'd}
\label{Ex_contd}
Let us check whether the conditions above apply to get the asymptotic normality of $\hat \beta$ in the case of the copula families in Subsection~\ref{subs_examples}.
\mds

{\bf\it Example 4 cont'd: the Gaussian copula.}

Obviously, Assumptions~\ref{dini},~\ref{hessc} and~\ref{lipschitz} are satisfied. This is the case for Assumption~\ref{d0} too, because $\Sigma \mapsto \ln (|\Sigma|)$ is Lipschitz under~(\ref{theta_max}) and invoking Example 19.7 in van der Vaart (2007).

\mds

To deal with Assumption~\ref{d6}, note that $p$ and $\hat p$ are Lipschitz transforms of conditional
Kendall's tau $\tau(\beta,\beta'\zb)$ and $\hat \tau(\beta,\beta'\zb)$ respectively. Due to Example 19.20 in van der Vaart (2007), it is sufficient to show that
the functions $\zb \mapsto \nabla_\beta\tau (\beta_0,\beta_0'\zb)$
and $\zb \mapsto \nabla_\beta\hat\tau (\beta_0,\beta_0'\zb)$ belong to a Donsker class a.e., assuming the underlying dimension $d$ is two.
It follows from Lemma \ref{tricky1} and from the relation $\tau(\beta_0,\beta_0'\zb)=4\int C_{\beta_0}(\mathbf{u}|\beta_0'\zb)C_{\beta_0}(d\mathbf{u}|\beta_0'\zb)-1$ that
$$\nabla_\beta \tau (\beta_0,\beta_0'\zb)=f_1(\beta_0'\zb)+\zb f_2(\beta_0'\zb),\; \zb\in \Zc,$$
with
\begin{eqnarray*}
f_1(v) &=&  -E[\ZZ|\beta_0'\ZZ=v,\ZZ\in \mathcal{Z}]\left\{\int c_0(\mathbf{u},v)C_{\beta_0}(d\mathbf{u}|v)+\int C_{\beta_0}(\mathbf{u}|v)c_0(d\mathbf{u},v)\right\},\\
f_2(v) &=& \ZZ \left\{\int c_0(\mathbf{u},v)C_{\beta_0}(d\mathbf{u}|v)+\int C_{\beta_0}(\mathbf{u}|v)c_0(d\mathbf{u},v)\right\},
\end{eqnarray*}
using the notations of Lemma \ref{tricky1}.
In a Gaussian copula family, $\zb\mapsto f_{j}(\beta_0'\zb)$ and $\zb\mapsto f'_j(\beta_0'\zb),$
 are uniformly bounded on $\Zc$. Therefore, $\nabla_\beta \tau (\beta_0,\beta_0'\zb)$ belongs to the class $\mathcal{G}=\{\zb\in \Zc\rightarrow f(\beta_0'\zb)+\zb g(\beta_0'\zb), f,g\in \mathcal{C}(M)\},$ with $\mathcal{C}(M)=\{f:\|f\|_{\infty}+\|f'\|_{\infty}\leq M\}.$ $\mathcal{C}(M)$ is a Donsker class from Theorem 2.7.1 in Van der Vaart and Wellner (1996). Moreover, $\mathcal{G}$ is Donsker from Examples 2.10.7 and 2.10.8 in Van der Vaart and Wellner (1996).

\mds

It is also the case for $\nabla_\beta\hat\tau $.
Indeed, with the notations of Section~\ref{condit_tau}, we can write
$$ \hat\tau (\beta,\beta'\zb)=\frac{4}{n^2 \hat f^2_{\beta}(\beta'\zb)} \sum_{i,j=1}^n \1(\XX_j\leq \XX_i) \tilde K_{\tilde h}\left(\beta'\ZZ_j -\beta'\zb \right)
\tilde K_{\tilde h}\left(\beta'\ZZ_i -\beta'\zb \right)-1. $$
A differentiation with respect to $\beta$ easily shows that $\nabla_\beta\hat{\tau} (\beta_0,\beta_0'\zb)$ is of the form
$$\nabla_\beta\hat{\tau} (\beta_0,\beta_0'\zb)=\hat{f}_1(\beta_0'\zb)+\zb \hat{f}_2(\beta_0'\zb).$$
The results of Section \ref{condit_tau} allow to show that $\sup_{\zb\in \Zc}|\hat{f}_j(\beta_0'\zb)-f_j(\beta_0'\zb)|=O_P(\tilde h_n^2+[\log n]^{1/2}n^{-1/2}\tilde h_n^{-3/2}),$ and that $\sup_{\zb\in\Zc}|\hat{f}'_j(\beta_0'\zb)-f'_j(\beta_0'\zb)|=O_P(\tilde h_n^2+[\log n]^{1/2}n^{-1/2}\tilde h_{n}^{-5/2}),$ for $j=1,2.$ Therefore,
$\zb\in \Zc\mapsto \nabla_\beta\hat{\tau} (\beta_0,\beta_0'\zb)$ belongs to the Donsker class $\mathcal{G}$ when $n\tilde h_n^5 \rightarrow 0$.

\mds

Assumption~\ref{hess} is coming from the results of Section~\ref{condit_tau}, and simple calculations prove that Assumption~\ref{uchapeau} is satisfied for every $\alpha >0$.

\mds

Recalling the notations of Section~\ref{condit_tau}, we have
$$\sup_{\zb\in \Zc}|\hat{\tau}(\beta_0,\beta_0'\zb)-\tau(\beta_0,\beta_0'\zb)|=O_P(\tilde h^{\tilde s}+[\log n]^{1/2}n^{-1/2}\tilde{h}^{-1/2}):=O_P(\eta_{1n}),\;\text{and}$$
$$\sup_{\zb\in \Zc}|\nabla_{\beta}\hat{\tau}(\beta,\beta_0'\zb)-\nabla_{\beta_0}\tau(\beta_0,\beta_0'\zb)|=O_P(\tilde h^{\tilde s}+[\log n]^{1/2}n^{-1/2}\tilde h^{-3/2})
:=O_P(\eta_{2n}).$$
To fix the ideas, assume $\tilde h \sim n^{-\tilde\pi}$, for some $\tilde\pi>0$. Then, to satisfy $\eta_{1n}\eta_{2n}=o(n^{-1/2})$, it is sufficient we have
$ 4\tilde s \tilde \pi >1$, $\tilde s \geq 2$ and $4\tilde\pi <1$.
Recall that we had set $\delta_n\sim n^{-\pi s}+\ln_2 n .n^{-(1-p \pi)/2}$. To satisfy $\delta_n^{1-\alpha}\eta_{jn}=o(n^{-1/2})$, $j=1,2$, it is sufficient to have
$$ 1< (1-\alpha)\min(2s\pi,1-p\pi) + \min( 2\tilde s \tilde \pi,1-3\tilde \pi).$$

\mds

Concerning Assumption~\ref{ToManageTrimming}, it can be verified that the l.h.s. of~(\ref{eq_ToManageTrimming}) is $O\left( \delta_n \nu_n[\Phi^{-1}(\nu_n)]^{2}   \right)$.
Nonetheless, $\Phi^{-1}(\nu_n)\sim - \sqrt{  (-2) \ln \nu_n}$, when $\nu_n \rightarrow 0$ (see Dominici, 2003). A sufficient condition is then $\delta_n \nu_n |\ln(\nu_n)| = o(n^{-1/2})$.

Assumptions~\ref{my_hatF} and~\ref{bias} are trivially satisfied because we have chosen nonparametric marginal cdfs' and we apply Lemma~\ref{my_Fk}, for which we have seen that we set $W(\zb,\xx)=0$.

Assumption~\ref{deriv_u2} is the most demanding and cannot be obtained by the same reasoning as for Assumption~\ref{ToManageTrimming}.
Actually, we recall that the former one has been requested only in the proof of Corollary~\ref{ouf} to show that
$$ \frac{1}{n}\sum_{i=1}^n
\nabla_{\uu}\nabla^2_{\theta} (\ln c_{\theta})_{|\theta = \psi_i}(\UU_i^*)
 .[\hat{\UU}_i-\UU_i]^2\nabla_{\beta}\psi(\beta,\beta'\ZZ_i)_{|\beta=\beta_0}\omega_{i,n} = o_P(n^{-1/2}),$$
for some random vectors $\UU_i^*$, $|\UU_i^* -\UU_i|\leq |\hat\UU_i -\UU_i|$.
Due to Assumption~\ref{A_unifapprox}, it is sufficient to check that
$$ \delta_n^2 E\left[ | \nabla_{\uu}\nabla^2_{\theta} (\ln c_{\theta})_{|\theta = \psi_i}(\UU_i)
\nabla_{\beta}\psi(\beta,\beta'\ZZ_i)_{|\beta=\beta_0}|\omega_{i,n}\right] = o(n^{-1/2}).$$
Due to the bounded-ness of $c_\theta$, the latter expectation is less than a constant times
$$ \int_{\Phi^{-1}(\nu_n)}^{\Phi^{-1}(1-\nu_n)} |t| \exp(t^2/2)\, dt.$$
The latter integral behaves as $ \exp\left( [\Phi^{-1}(\nu_n)]^2/2 \right)$.
Since  $\Phi^{-1}(\nu_n)\sim - \sqrt{  (-2) \ln \nu_n}$, it is sufficient to satisfy
$ \delta_n^2 /\nu_n = o(n^{-1/2}).$
Usual variance calculations for kernel densities prove that Assumption~\ref{uprocess2} is true when $nh^p=n^{1-p\pi} \rightarrow \infty$, i.e. when $p\pi <1$.

\mds

Gathering all the previous constraints, we can exhibit explicit combinations of parameters. For instance, we can set
$$s=2p ,\; \tilde s=4 ,\;  \pi = 1/(2s+p),\; \tilde \pi = 1/9,
h_n\sim n^{-1/(2s+p)}=n^{-1/5p},\;\tilde h_n\sim n^{-4/9}, \alpha <1/2,$$
implying $ \delta_n \sim n^{-2/5}$ and we choose $ \nu_n = n^{-1/5}$.
Note that we need high-order kernels in general, even in the bivariate case ($p=2$).

\mds

Similar reasonings allow to exhibit explicit tuning parameters to manage Clayton and/or Gumbel copula models. They are left to the reader as an exercise.

\section{Conditional Kendall's Tau}
\label{condit_tau}
In this section, we show how to check Assumptions~\ref{hess} and~\ref{dbis} in general, when the conditional margins are estimated non-parametrically.
Incidentally, we prove some theoretical results related to the estimation of conditional Kendall's tau, that are valuable per se.

\mds

We consider the situation of a $d$-dimensional random vector $\XX,$ whose conditional copula is parameterized by $\tau(\beta,\beta'\zb)$, the conditional Kendall's tau coefficient of this vector as defined in~(\ref{tauHbeta}) when $d=2$, and~(\ref{defTau_d}) more generally.
In other words, we consider the case where $\psi(\beta,\beta'\zb)=\Phi(\tau(\beta,\beta'\zb))$ for some ``sufficiently regular'' function $\Phi$.
Indeed, Kendall's tau are commonly used for inference purpose of parametric copulae, particularly Archimedean and elliptical copulae.
Moreover, as explained in Subsection~\ref{psi_mgt}, (A1) and (A2) are satisfied in such cases. Finally, we do not suffer from the curse of dimension because conditional Kendall's tau
are those associated to the copula of $\XX$ knowing $\beta'\ZZ$.

\mds

Introducing a kernel estimator $\hat{F}_{\beta}$ of $F_{\beta}(\xx|y)=\mathbb{P}(\XX\leq \xx|\beta'\ZZ=y)$ as
$\hat{F}_{\beta}(\xx|y)=\hat{H}_{\beta}(\xx,\infty|y)$ (recall~(\ref{def_Hbeta})), define
\beq \nonumber
\hat{\tau}(\beta,\beta'\zb) = \frac{1}{2^d -1}\left\{ 2^d\int \hat{F}_\beta(\xx | \beta'\zb) \hat{F}_\beta(d\xx | \beta'\zb)-1\right\}.
\eeq

In Lemma \ref{con_tau} below, we show that the uniform consistency of the conditional Kendall's tau coefficient is obtained, provided that we have some convenient convergence rates for $\hat{F}_{\beta}.$

\begin{lemma}
\label{con_tau}
Assume that
\begin{eqnarray}
\sup_{\xx\in \Rb^d,\beta \in \Bc,\zb \in \Zc} |\hat{F}_{\beta}(\xx|\beta'\zb)-F_{\beta}(\xx|\beta'\zb)| &=& O_P(\varepsilon_{n,0})\label{unif_const1}. \label{unif_const2}
\end{eqnarray}
Then,
$$\sup_{\beta \in \Bc,\zb\in \Zc}|\hat{\tau}(\beta,\beta'\zb)-\tau(\beta,\beta'\zb)|=O_P(\varepsilon_{n,0}).$$
\end{lemma}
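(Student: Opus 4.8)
The plan is to reduce the statement to the elementary Lipschitz property of the map $H\mapsto \int H\,dH$ that was already recorded in Subsection~\ref{psi_mgt}, applied pointwise in $(\beta,\zb)$, and then to take suprema on both sides. Fix $\beta\in\Bc$ and $\zb\in\Zc$; then $y=\beta'\zb$ lies in a fixed real compact interval, so $\hat F_\beta(\cdot\,|\,y)$ is well defined. From the definition of $\hat\tau$ and from the expression of $\tau$ as a Kendall-type functional of $F_\beta$ (recall~(\ref{tauHbeta}) and~(\ref{defTau_d})), I would write
$$\hat{\tau}(\beta,\beta'\zb)-\tau(\beta,\beta'\zb)=\frac{2^d}{2^d-1}\left\{\int \hat F_\beta(\xx\,|\,\beta'\zb)\,\hat F_\beta(d\xx\,|\,\beta'\zb)-\int F_\beta(\xx\,|\,\beta'\zb)\,F_\beta(d\xx\,|\,\beta'\zb)\right\},$$
and note that the numerical factor satisfies $2^d/(2^d-1)\le 2$.

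Next I would decompose the bracket as $\int(\hat F_\beta-F_\beta)\,d\hat F_\beta+\int F_\beta\,d(\hat F_\beta-F_\beta)$ and treat the second summand by a $d$-dimensional integration by parts. Since $\hat F_\beta(\cdot\,|\,y)$ is a weighted empirical distribution function (a step function) and $F_\beta(\cdot\,|\,y)$ is a continuous distribution function, and since they agree at $\xx=+\infty$ (both equal $1$, because the kernel weights sum to one) and at $\xx=-\infty$ (both equal $0$), all boundary terms cancel, so that $\int F_\beta\,d(\hat F_\beta-F_\beta)=-\int(\hat F_\beta-F_\beta)\,dF_\beta$. This is precisely the computation carried out in Subsection~\ref{psi_mgt} showing that the Kendall's-tau functional $\Psi$ is Lipschitz; it yields
$$\left|\int \hat F_\beta\,d\hat F_\beta-\int F_\beta\,dF_\beta\right|\le \|\hat F_\beta(\cdot\,|\,\beta'\zb)-F_\beta(\cdot\,|\,\beta'\zb)\|_\infty\left(\int|d\hat F_\beta|+\int|dF_\beta|\right)\le 2\,\|\hat F_\beta(\cdot\,|\,\beta'\zb)-F_\beta(\cdot\,|\,\beta'\zb)\|_\infty,$$
where I use that $F_\beta(d\xx\,|\,y)$ is a probability measure (total variation $1$) and that $\hat F_\beta(d\xx\,|\,y)$ is a probability measure as well — or, for a general signed kernel, a signed measure whose total variation stays uniformly bounded in $\beta$, $\zb$ and $n$. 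Combining, $|\hat{\tau}(\beta,\beta'\zb)-\tau(\beta,\beta'\zb)|\le C\sup_{\xx\in\Rb^d}|\hat F_\beta(\xx\,|\,\beta'\zb)-F_\beta(\xx\,|\,\beta'\zb)|$ with a universal constant $C$ (one may take $C=4$).

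Finally, since this bound is pointwise in $(\beta,\zb)$, I would take the supremum over $\beta\in\Bc$ and $\zb\in\Zc$ on the right-hand side and invoke the hypothesis~(\ref{unif_const2}) to obtain $\sup_{\beta\in\Bc,\zb\in\Zc}|\hat{\tau}(\beta,\beta'\zb)-\tau(\beta,\beta'\zb)|\le C\,O_P(\varepsilon_{n,0})=O_P(\varepsilon_{n,0})$, which is the claim. The only point that deserves genuine care is the multivariate integration by parts together with the uniform control of the total variations of $F_\beta(d\xx\,|\,y)$ and $\hat F_\beta(d\xx\,|\,y)$: one must check that the boundary terms at infinity indeed cancel (this uses that $\hat F_\beta-F_\beta$ vanishes at $\pm\infty$ in each coordinate) and that the weighted empirical measure has bounded total variation uniformly in $\beta,\zb,n$; everything else is just the pointwise Lipschitz estimate already in the paper followed by taking suprema, so no further difficulty is expected.
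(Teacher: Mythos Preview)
Your argument is correct and follows the same route as the paper: decompose $\int\hat F_\beta\,d\hat F_\beta-\int F_\beta\,dF_\beta$ into $\int(\hat F_\beta-F_\beta)\,d\hat F_\beta+\int F_\beta\,d(\hat F_\beta-F_\beta)$, bound the first piece directly by the uniform assumption, and handle the second by a $d$-dimensional integration by parts that transfers the differential onto $F_\beta$. The only cosmetic discrepancy is that the paper records the sign as $(-1)^{d-1}$ rather than $-1$ in the integration-by-parts identity, but since you pass to absolute values immediately this is immaterial; your observation about uniform control of the total variation of $\hat F_\beta(d\xx\,|\,y)$ when the kernel may take negative values is a fair caveat that the paper leaves implicit.
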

\begin{proof}
Decompose
\begin{eqnarray*}
(2^d -1)\left\{\hat{\tau}(\beta,\beta'\zb)-\tau(\beta,\beta'\zb)\right\} &=& 2^d\int \{\hat{F}_\beta(\xx | \beta'\zb)-F_\beta(\xx | \beta'\zb) \}\hat{F}_\beta(d\xx | \beta'\zb)\\
&+ & 2^d\int F_\beta(\xx | \beta'\zb)\{\hat{F}_\beta(d\xx | \beta'\zb)-F_\beta(d\xx | \beta'\zb)\}.
\end{eqnarray*}
The first term is $O_P(\varepsilon_n)$ due to (\ref{unif_const1}). For the second, observe that
$$\int F_\beta(\xx | \beta'\zb) \{ \hat F_\beta(d\xx | \beta'\zb) - F_\beta(d\xx | \beta'\zb)\} = (-1)^{d-1}
\int  \{ \hat F_\beta(\xx | \beta'\zb) - F_\beta(\xx | \beta'\zb)\}\, F(d\xx | \beta'\zb),$$
which is less than $\sup_{\xx,\beta,\zb} |\hat{F}_{\beta}(\xx|\beta'\zb)-F_{\beta}(\xx|\beta'\zb)|,$ and we use again (\ref{unif_const1}).
\end{proof}

\mds

Lemma \ref{con_tau} provides some tools to verify the first part of Assumptions \ref{hess} and \ref{dbis}, if one assumes that the function $\Phi$ is regular enough (that is H\"older with some high enough H\"older exponent).
Similarly, we can derive the uniform consistency of $\nabla_{\beta}^j \hat{\tau}$ for $j=1,2,$ which allows to check the remaining conditions in Assumptions \ref{hess} and \ref{dbis}.

\begin{lemma}
\label{con_tau2}
Assume that
\begin{eqnarray}
\sup_{\xx\in \Rb^d,\beta\in \Bc,\zb\in \Zc} |\nabla_{\beta}^j\hat{F}_{\beta}(\xx|\beta'\zb)-\nabla_{\beta}^jF_{\beta}(\xx|\beta'\zb)| &=& O_P(\varepsilon_{n,j})\label{unif_const4},
\end{eqnarray}
for $j=1,2$, and that
$$\sup_{j=1,2}\int \left|\nabla^j_{\beta}F_{\beta}(d\xx|\beta'\zb)\right| + \left|\nabla^j_{\beta}\hat F_{\beta}(d\xx|\beta'\zb)\right|\leq M,$$ for some $M>0.$
Then,
$$\sup_{\beta\in \Bc,\zb\in \Zc}|\nabla_{\beta}\hat{\tau}(\beta,\beta'\zb)-\nabla_{\beta}\tau(\beta,\beta'\zb)|=O_P(\max(\varepsilon_{n,1}, \varepsilon_{n,0})),\;\text{and}$$
$$\sup_{\beta\in \Bc,\zb \in \Zc}|\nabla_{\beta}^2 \hat{\tau}(\beta,\beta'\zb)-\nabla_{\beta}^2 \tau(\beta,\beta'\zb)|=O_P (\max(\varepsilon_{n,2},\varepsilon_{n,1},\varepsilon_{n,0})).$$
\end{lemma}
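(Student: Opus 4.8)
The plan is to imitate the proof of Lemma~\ref{con_tau}, using that, up to the affine map $u\mapsto c_d(2^d u-1)$ with $c_d:=1/(2^d-1)$, one has $\hat{\tau}(\beta,\beta'\zb)=c_d\bigl(2^d B(\hat F_\beta,\hat F_\beta)-1\bigr)$ and $\tau(\beta,\beta'\zb)=c_d\bigl(2^d B(F_\beta,F_\beta)-1\bigr)$, where $B(G,H):=\int G(\xx|\beta'\zb)\,H(d\xx|\beta'\zb)$ is a bilinear form in $(G,H)$. First I would record that $\nabla_\beta$ may be carried inside $B$. For the estimator this is bookkeeping on a finite sum: $\hat F_\beta(\xx|\beta'\zb)=\sum_{\ell=1}^n w_{\beta,\ell,n}(\beta'\zb)\1(\XX_\ell\le\xx)$, the weights are smooth in $\beta$ as soon as the kernel is twice continuously differentiable and $\beta\mapsto\beta'\zb$ linear, so $\nabla_\beta^j\hat F_\beta(d\xx|\beta'\zb)=\sum_{\ell=1}^n\nabla_\beta^j w_{\beta,\ell,n}(\beta'\zb)\,\delta_{\XX_\ell}(d\xx)$ is a finite signed measure whose total variation is precisely the quantity the hypothesis bounds by $M$; for $F_\beta$ the same smoothness and the bound $\int|\nabla_\beta^j F_\beta(d\xx|\beta'\zb)|\le M$ are part of the assumptions. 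Leibniz's rule then gives
\[
\nabla_\beta\hat{\tau} = c_d 2^d\bigl\{B(\nabla_\beta\hat F_\beta,\hat F_\beta)+B(\hat F_\beta,\nabla_\beta\hat F_\beta)\bigr\}
\]
and
\[
\nabla_\beta^2\hat{\tau} = c_d 2^d\bigl\{B(\nabla_\beta^2\hat F_\beta,\hat F_\beta)+2B(\nabla_\beta\hat F_\beta,\nabla_\beta\hat F_\beta)+B(\hat F_\beta,\nabla_\beta^2\hat F_\beta)\bigr\},
\]
together with the identical formulas for $\tau$ with $F_\beta$ in place of $\hat F_\beta$.

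Next I would subtract the two and bound block by block. For $j\in\{1,2\}$ the difference $\nabla_\beta^j\hat{\tau}-\nabla_\beta^j\tau$ is a finite combination of terms $B(\nabla_\beta^a\hat F_\beta,\nabla_\beta^b\hat F_\beta)-B(\nabla_\beta^a F_\beta,\nabla_\beta^b F_\beta)$ with $a+b=j$, and each such term splits, by adding and subtracting, into
\[
B\bigl(\nabla_\beta^a\hat F_\beta-\nabla_\beta^a F_\beta,\ \nabla_\beta^b\hat F_\beta\bigr)+B\bigl(\nabla_\beta^a F_\beta,\ \nabla_\beta^b\hat F_\beta-\nabla_\beta^b F_\beta\bigr).
\]
In the first summand I use $|B(G,H)|\le\|G\|_\infty\int|H(d\xx|\beta'\zb)|$: the sup-norm is $O_P(\varepsilon_{n,a})$ by~(\ref{unif_const4}) (reading $\varepsilon_{n,0}$ for the rate in~(\ref{unif_const1})), while $\int|\nabla_\beta^b\hat F_\beta(d\xx|\beta'\zb)|$ equals $1$ if $b=0$ and is $\le M$ if $b\ge1$. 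In the second summand I first integrate by parts, exactly as in the proof of Lemma~\ref{con_tau}, to transfer the differential onto the non-difference factor:
\[
B\bigl(\nabla_\beta^a F_\beta,\ \nabla_\beta^b\hat F_\beta-\nabla_\beta^b F_\beta\bigr)=(-1)^{d-1}\int\bigl(\nabla_\beta^b\hat F_\beta-\nabla_\beta^b F_\beta\bigr)(\xx|\beta'\zb)\,\nabla_\beta^a F_\beta(d\xx|\beta'\zb),
\]
the boundary contributions vanishing because at the corner where all coordinates tend to $+\infty$ one of the two factors is a $\beta$-derivative of order $\ge1$ of the constant $1$ (since $a+b\ge1$), hence is zero; this last quantity is $\le\|\nabla_\beta^b\hat F_\beta-\nabla_\beta^b F_\beta\|_\infty\int|\nabla_\beta^a F_\beta(d\xx|\beta'\zb)|=O_P(\varepsilon_{n,b})\cdot(1\text{ or }M)$. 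All the hypotheses being uniform in $(\xx,\beta,\zb)$ and there being finitely many blocks, taking suprema and summing yields $O_P(\max(\varepsilon_{n,0},\varepsilon_{n,1}))$ for $\nabla_\beta\hat{\tau}-\nabla_\beta\tau$ (blocks $(a,b)\in\{(1,0),(0,1)\}$) and $O_P(\max(\varepsilon_{n,0},\varepsilon_{n,1},\varepsilon_{n,2}))$ for $\nabla_\beta^2\hat{\tau}-\nabla_\beta^2\tau$ (blocks $(2,0),(1,1),(0,2)$), which is the claim.

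The part I expect to be most delicate is the justification, for the true $F_\beta$, that $\nabla_\beta$ genuinely commutes with the $d$-dimensional Stieltjes integral $B$ and that the iterated integration by parts is licit with all boundary terms killed. For the estimator everything is a manipulation of finite sums once the kernel is assumed twice continuously differentiable. For $F_\beta$ it rests on the smoothness built into the statement, on the total-variation bounds, and on the elementary facts that $F_\beta(\xx|\beta'\zb)\to0$ when any coordinate of $\xx$ tends to $-\infty$ and that its all-coordinates-$+\infty$ limit is the constant $1$, so that every $\beta$-derivative of order $\ge1$ vanishes there; beyond this, the whole argument reduces to the single sup-norm-times-total-variation estimate already exploited in Lemma~\ref{con_tau}.
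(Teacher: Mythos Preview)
Your proposal is correct and follows essentially the same route as the paper, which merely states that the result ``is a consequence of applying the $\nabla$-operator to $\hat{\tau}(\beta,\beta'\zb)$, and of the compactness of $\Zc$.'' You have simply spelled out what this one-line proof means: Leibniz on the bilinear form $B$, the add-and-subtract decomposition block by block, and the sup-norm-times-total-variation bound (with the same integration-by-parts trick as in Lemma~\ref{con_tau} to move the differential off the difference term).
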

\begin{proof}
This is a consequence of applying the $\nabla-$operator to
$\hat{\tau}(\beta,\beta'\zb)$, and of the compactness of $\Zc$.
\end{proof}

The next step is to verify that, under reasonable conditions, (\ref{unif_const1}) and (\ref{unif_const4}) hold. To this aim, let us introduce some assumptions.

\begin{assum}
\label{akernel}
Let $\tilde K$ denote a univariate symmetric kernel function of order $\tilde s$, $s\geq 2$. It is twice continuously differentiable with bounded derivatives up to order $2$.
Moreover, $(\tilde h_n)$ denotes a bandwidth sequence, where $\tilde h_n=O(n^{-a})$ for some $a>0$ and $n\tilde h_n \rightarrow \infty$.
\end{assum}
Note that, in general, the latter triplet $(\tilde K,\tilde h,\tilde s)$ is different from the similar quantities $(K,h,s)$ that have been invoked to define the pseudo-observations $\hat\UU_i$ (see Lemma~\ref{my_Fk}).
\begin{assum}
\label{abounded} Let $f_{\beta}(y)$ denote the density of $\beta'\ZZ$ evaluated at point $y$.
Assume that $\inf_{\beta\in \Bc,\zb\in \Zc}\inf_y f_{\beta}(y)>c,$ for some $c>0.$ Moreover, assume that $f_{\beta}$ is $s$-times continuously differentiable,
with uniformly bounded derivatives.
\end{assum}

The latter assumption is satisfied most of the time, because $\beta'\zb$ belongs to a compact subset when $\beta\in\Bc$ and $\zb\in\Zc$~\footnote{For instance, assume the arguments $y$ above belong to a fixed interval $[a,b]$ and that $\ZZ$ follows a Gaussian $\Nc(0,\Sigma)$. Then $\beta'\ZZ\sim \Nc(0, \beta'\Sigma\beta)$ and
$f_\beta(y)=\exp(-y^2/2(\beta'\Sigma \beta))/(\sqrt{2\pi} \beta'\Sigma \beta).$ Since $\beta'\Sigma \beta$ belongs to a compact $[c,d]$, $c>0$, the latter density is
larger than $\exp(-b^2/(2d^2))/(\sqrt{2\pi} d) >0$.}.
In the single-index literature, some authors relaxed this assumption, by only assuming
$\inf_{\zb}\inf_y f_{\beta_0}(y)>c.$ Nevertheless, Assumption \ref{abounded} requires to introduce a trimming procedure, in order to avoid parts of the distribution for which some $f_{\beta}(\beta'\ZZ_i)$ are too close to zero. Such trimming procedures (generally working in two-steps), that can be extended straightforwardly in our case, have been investigated in detail for example in Lopez, Patilea, Van Keilegom (2013).

\mds

Let $\mathcal{A}$ denote a generic set of functions with envelope $F$. For a probability measure $\mathbb{Q},$ let $\mathcal{N}(\varepsilon,\mathcal{A},\|\cdot\|_{2,\mathbb{Q}})$ denote the number of $L^2(\mathbb{Q})-$balls required to cover the set of functions $\mathcal{A},$ and $N(\varepsilon,\mathcal{A})=\sup_{\mathbb{Q}:\|F\|_{2,\mathbb{Q}}<\infty}\mathcal{N}(\varepsilon\|F\|_{2,\mathbb{Q}},\mathcal{A},\|\cdot\|_{2,\mathbb{Q}}).$

\begin{assum}
\label{aclass}
$\mathcal{A}$ is a class of functions bounded by $1$ such that
$N(\varepsilon,\mathcal{A})\leq C\varepsilon^{-\nu}.$ Moreover, for $\phi \in \mathcal{A}$,
let $m_{\phi}(y)=E[\phi(\XX,\ZZ)|\beta'\ZZ=y]$. Assume that the functions $m_{\phi}$ are twice continuously differentiable, and their derivatives
up to order $2$ are bounded by some finite constant $M$ that does not depend on $\phi$.
\end{assum}

We first state Lemma \ref{e_mason} that provides consistency rates for kernel weighted sums.

\begin{lemma}
\label{e_mason}
Let $\mathcal{L}$ denote a class of functions satisfying Assumption \ref{aclass}.
Under Assumption \ref{akernel}, we have
\bqa
\lefteqn{ \frac{1}{n\tilde h}\sup_{\lambda\in \Lc}\sup_{\beta\in \Bc,\zb\in \Zc}\left|\sum_{i=1}^n \lambda(\XX_i,\ZZ_i)\tilde K\left(\frac{\beta'\ZZ_i-\beta'\zb}{\tilde h}\right)-E\left[\lambda(\XX_i,\ZZ_i)\tilde K\left(\frac{\beta'\ZZ_i-\beta'\zb}{\tilde h}\right)\right]\right|  }\\
&=& O_P([\log n]^{1/2}n^{-1/2}\tilde h^{-1/2}).\hspace{7cm}
\eqa
\end{lemma}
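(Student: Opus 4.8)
The plan is to recognise the left-hand side as a normalised supremum of a centred empirical process indexed by a suitable function class, and then to invoke the uniform-in-bandwidth concentration results of Einmahl and Mason (2005) — the machinery already used repeatedly above — after checking that our class and kernel satisfy their hypotheses. Since $\Bc$ and $\Zc$ are compact, $t:=\beta'\zb$ ranges over a fixed compact interval $I\subset\Rb$ as $(\beta,\zb)$ varies over $\Bc\times\Zc$, so the supremum in the statement is taken over
$$\Gc_{\tilde h}:=\left\{(\xx,\yy)\mapsto \lambda(\xx,\yy)\,\tilde K\!\left(\frac{\beta'\yy-t}{\tilde h}\right)\ :\ \lambda\in\Lc,\ \beta\in\Bc,\ t\in I\right\},$$
which admits the constant envelope $U:=\|\tilde K\|_\infty$ (finite for a regular kernel), since $|\lambda|\le 1$ by Assumption~\ref{aclass}. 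Writing $g$ for a generic element of $\Gc_{\tilde h}$, the quantity to be controlled is $n^{-1}\tilde h^{-1}\sup_{g\in\Gc_{\tilde h}}\left|\sum_{i=1}^n\{g(\XX_i,\ZZ_i)-E\,g(\XX_i,\ZZ_i)\}\right|$.

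The first block of work is the complexity bound: I would show that $\Gc_{\tilde h}$ is of VC-type, with uniform covering numbers $N(\ep,\Gc_{\tilde h})\le C\ep^{-\nu'}$ for constants $C,\nu'$ independent of $n$ and $\tilde h$. The maps $(\beta,t)\mapsto(\yy\mapsto\beta'\yy-t)$ span a $(p{+}1)$-dimensional vector space, hence form a VC-subgraph class; composing with the fixed function $\tilde K$, which is bounded and of bounded variation, keeps the uniform covering numbers polynomial (van der Vaart and Wellner, 1996, Lemma~2.6.18), and the dilation by $\tilde h$ does not change combinatorial complexity. By Assumption~\ref{aclass} the class $\Lc$ has $N(\ep,\Lc)\le C\ep^{-\nu}$, and the pointwise product of two uniformly bounded classes with polynomial uniform covering numbers is again such a class (van der Vaart and Wellner, 1996, Section~2.10). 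For the variances, $|\lambda|\le1$ together with the uniformly bounded density $f_\beta$ of $\beta'\ZZ$ (Assumption~\ref{abounded}) gives, after the substitution $u=(\beta'\ZZ-t)/\tilde h$,
$$\sup_{g\in\Gc_{\tilde h}}E\left[g(\XX,\ZZ)^2\right]\ \le\ \sup_{\beta\in\Bc,\,t\in I}\int\tilde K^2\!\left(\frac{y-t}{\tilde h}\right)f_\beta(y)\,dy\ =\ \tilde h\,\sup_{\beta,t}\int\tilde K^2(u)\,f_\beta(t+\tilde h\,u)\,du\ \le\ c_0\,\tilde h\ =:\ \sigma_n^2 .$$

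With a VC-type class, a constant envelope $U$, and variance proxy $\sigma_n^2\asymp\tilde h$, the Talagrand-type moment inequality for suprema of empirical processes (Einmahl and Mason, 2005, Proposition~1; see also Gin\'e and Guillou, 2002) yields
$$E\left[\sup_{g\in\Gc_{\tilde h}}\left|\sum_{i=1}^n\{g(\XX_i,\ZZ_i)-E\,g\}\right|\right]\ \le\ C_1\left(\sqrt{\,n\,\sigma_n^2\,\log(1/\sigma_n)\,}+U\log(1/\sigma_n)\right).$$
Under Assumption~\ref{akernel} one has $\tilde h=O(n^{-a})$ with $n\tilde h\to\infty$, so $\log(1/\tilde h)\asymp\log n$ and the right-hand side is $O\big(\sqrt{n\tilde h\log n}\big)$. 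Dividing by $n\tilde h$ and applying Markov's inequality gives
$$\frac1{n\tilde h}\sup_{g\in\Gc_{\tilde h}}\left|\sum_{i=1}^n\{g(\XX_i,\ZZ_i)-E\,g\}\right|\ =\ O_P\!\left(\sqrt{\frac{\log n}{n\tilde h}}\,\right)\ =\ O_P\big([\log n]^{1/2}n^{-1/2}\tilde h^{-1/2}\big),$$
which is the assertion. (Equivalently, this is Theorem~2 / Corollary~3 of Einmahl and Mason (2005) applied with $\Lc$ playing the role of their generic index class.)

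The only genuinely delicate point is the complexity bound in the second step: one must verify that the kernel-reweighted product class $\Gc_{\tilde h}$ has uniform covering numbers that are polynomial and, crucially, do not deteriorate as $\tilde h\to0$. This rests on $\tilde K$ being a regular kernel (bounded, of bounded variation, square-integrable) and on the observation that scaling the argument by $\tilde h$ alters only the localisation scale, not the VC structure; once this and the uniform two-sided bounds on $f_\beta$ are in place, the remainder is a routine application of the empirical-process inequalities of Einmahl and Mason (2005).
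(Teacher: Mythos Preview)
Your proof is correct and follows essentially the same route as the paper's: both verify that the product class $\Lc\cdot\{\text{kernel shifts}\}$ has polynomial uniform covering numbers (you via van der Vaart--Wellner, Lemma~2.6.18, the paper via Lemma~22 of Nolan--Pollard together with Lemma~A.1 of Einmahl--Mason (2000)), observe that the variance proxy is $O(\tilde h)$, and conclude via the Talagrand-type bounds of Einmahl--Mason (2005). The only cosmetic difference is that the paper explicitly splits the final step into its Propositions~\ref{prop_talagrand} and~\ref{prop_rade} (exponential inequality plus a bound on the symmetrized expectation), whereas you invoke the resulting moment bound directly and finish with Markov's inequality.
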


\begin{proof}
Let $$\mathcal{B}=\sup_{\beta,\zb,\lambda}\left|\sum_{i=1}^n \lambda(\XX_i,\ZZ_i)\tilde K\left(\frac{\beta'\ZZ_i-\beta'\zb}{\tilde h}\right)-E\left[\lambda(\XX_i,\ZZ_i)\tilde K\left(\frac{\beta'\ZZ_i-\beta'\zb}{\tilde h}\right)\right] \right|,$$
and $$\mathcal{B}_{\varepsilon}=E\left[\sup_{\beta,\zb,\lambda}\left|\sum_{i=1}^n \varepsilon_i\lambda(\XX_i,\ZZ_i)\tilde K\left(\frac{\beta'\ZZ_i-\beta'\zb}{\tilde h}\right)\right|\right],$$
where $(\varepsilon_i)_{1\leq i \leq n}$ are i.i.d. Rademacher variables.
Due to Proposition \ref{prop_talagrand}, we have
\begin{equation}\label{ineg_expo}
\mathbb{P}\left(\mathcal{B} \geq A_1(\mathcal{B}_{\varepsilon}+t)\right)\leq 2\left\{\exp(-A'_2t^2/(n\tilde h))+\exp(-A_2t)\right\},
\end{equation}
where $A'_2$ is a constant. Indeed, since the function $\lambda$ are uniformly and bounded by one,
$$\sup_{\beta,\zb,\lambda} Var \left(\lambda(\XX,\ZZ)\tilde K\left(\frac{\beta'\ZZ-\beta'\zb}{\tilde h}\right)\right)=O(\tilde h).$$
Next, observe that the class of functions
$$\mathcal{L}_{\tilde K}=\left\{g: \Rb^d\times \Zc  \rightarrow \Rb, (\xx,\zb)\mapsto \lambda(\xx,\zb)\tilde K\left(\frac{\beta'\zb-\beta'\textbf{u}}{\tilde h}\right):\textbf{u}\in \Zc,\beta\in \Bc, \tilde h\in \Rb^+\right\},$$
satisfies the assumptions of Proposition \ref{prop_rade} with $\sigma^2=O(h)$ and
\begin{equation}
N(\varepsilon,\mathcal{L}_{\tilde K})\leq C\varepsilon^{-\nu}, \label{covering}
\end{equation}
for some $C$ and $\nu.$ The property (\ref{covering}) can be obtained from the following: Lemma 22 in Nolan and Pollard (1987) shows that
$N(\varepsilon,\mathcal{K})\leq C_2\varepsilon^{-\nu_2},$ where
$$\mathcal{K}=\left\{(\xx,\zb)\in \Rb^d\times \Zc\mapsto \tilde K\left(\frac{\beta'\zb-\beta'\textbf{u}}{\tilde h}\right):\textbf{u}\in \Zc,\beta\in \Bc, \tilde h\in \Rb^+\right\}.$$
Using Assumption \ref{aclass} and Lemma A.1 in Einmahl and Mason (2000), we get that $\mathcal{L}_{\tilde K}= \mathcal{L}\cdot \mathcal{ K}$ satisfies (\ref{covering}).

\mds

Therefore, we can apply Proposition \ref{prop_rade} to deduce that
\begin{equation}\mathcal{B}_{\varepsilon}\leq A'n^{1/2}\tilde h^{1/2}[\log (\tilde h^{-1})]^{1/2}=A''n^{1/2}\tilde h^{1/2}[\log n]^{1/2}. \label{BE}
\end{equation}
It follows from (\ref{BE}) that, for $t_1>2A_1A'',$
$$\mathbb{P}(\mathcal{B}\geq t_1n^{1/2}\tilde h^{1/2}[\log n]^{1/2})\leq \mathbb{P}\left(\mathcal{B}\geq A_1\mathcal{B}_{\varepsilon}+t_1n^{1/2}\tilde h^{1/2}[\log n]^{1/2}/2\right).$$
Applying (\ref{ineg_expo}) with $t=t_1 n^{1/2}\tilde h^{1/2}[\log n]^{1/2}/(2A_1)$, we get
$$\mathbb{P}(\mathcal{B}\geq t_1 n^{1/2}\tilde h^{1/2}[\log n]^{1/2})\leq 2\left\{\exp(-A_2' t_1^2[\log n]/(4A_1^2))+\exp(-A_2t_1n^{1/2}\tilde h^{1/2}[\log n]^{1/2}/(2A_1))\right\},$$
and the result follows.
\end{proof}

This Lemma is the cornerstone of Lemma \ref{consist_F} belows, which ensures consistency rates for $\hat{F}_{\beta}$ and its derivatives.

\begin{lemma}
\label{consist_F}
Let $\mathcal{A}$ denote a class of functions satisfying Assumption \ref{aclass}.
Then, under Assumptions \ref{akernel} and \ref{abounded},
\bqa
\lefteqn{
\sup_{\phi\in \mathcal{A}}\sup_{\beta\in \Bc,\zb\in\Zc}\left|\int \phi(\xx,\zb)\{\hat{F}_\beta(d\xx | \beta'\zb)-F_\beta(d\xx | \beta'\zb)\}\right| }\\
&=&O_P\left(\tilde h^{\tilde s}+[\log n]^{1/2}n^{-1/2}\tilde h^{-1/2}\right).
\eqa
\end{lemma}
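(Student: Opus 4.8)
The plan is to rewrite $\int\phi(\xx,\zb)\,\hat F_\beta(d\xx\,|\,\beta'\zb)$ as a Nadaraya--Watson ratio and then control its numerator and denominator separately, the numerator being split into a stochastic part handled by Lemma~\ref{e_mason} and a deterministic bias. Since $\hat F_\beta(\cdot\,|\,y)$ is the discrete measure putting mass $\tilde w_{\beta,j,n}(y)=\tilde K((\beta'\ZZ_j-y)/\tilde h)/\sum_{l}\tilde K((\beta'\ZZ_l-y)/\tilde h)$ at $\XX_j$, set
$$\hat f_\beta(y):=\frac{1}{n\tilde h}\sum_{j=1}^n \tilde K\!\left(\frac{\beta'\ZZ_j-y}{\tilde h}\right),\qquad \hat m_{\phi,\beta}(\zb):=\frac{1}{n\tilde h}\sum_{j=1}^n \phi(\XX_j,\zb)\,\tilde K\!\left(\frac{\beta'\ZZ_j-\beta'\zb}{\tilde h}\right),$$
so that $\int\phi(\xx,\zb)\,\hat F_\beta(d\xx\,|\,\beta'\zb)=\hat m_{\phi,\beta}(\zb)/\hat f_\beta(\beta'\zb)$, while $\int\phi(\xx,\zb)\,F_\beta(d\xx\,|\,\beta'\zb)=E[\phi(\XX,\zb)\,|\,\beta'\ZZ=\beta'\zb]=:m_{\phi,\beta}(\zb)$, a quantity bounded by $1$. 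Hence
$$\int\phi(\xx,\zb)\{\hat F_\beta(d\xx\,|\,\beta'\zb)-F_\beta(d\xx\,|\,\beta'\zb)\}=\frac{\hat m_{\phi,\beta}(\zb)-m_{\phi,\beta}(\zb)\,\hat f_\beta(\beta'\zb)}{\hat f_\beta(\beta'\zb)},$$
and it suffices to bound the numerator uniformly over $\phi\in\mathcal{A}$, $\beta\in\Bc$, $\zb\in\Zc$ and to keep $\hat f_\beta(\beta'\zb)$ away from $0$.

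For the denominator, Lemma~\ref{e_mason} applied with the trivial class $\mathcal{L}=\{1\}$ gives $\sup_{\beta\in\Bc,\zb\in\Zc}|\hat f_\beta(\beta'\zb)-E\hat f_\beta(\beta'\zb)|=O_P([\log n]^{1/2}n^{-1/2}\tilde h^{-1/2})=o_P(1)$, and the change of variables $y=\beta'\zb+\tilde h t$ together with a Taylor expansion of $f_\beta$ (using the order-$\tilde s$ property of $\tilde K$ and the bounded derivatives of $f_\beta$ from Assumption~\ref{abounded}) gives $E\hat f_\beta(\beta'\zb)=f_\beta(\beta'\zb)+O(\tilde h^{\tilde s})$ uniformly. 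Since $\inf_{\beta,\zb}f_\beta(\beta'\zb)>c$, on an event of probability tending to $1$ one has $\inf_{\beta\in\Bc,\zb\in\Zc}\hat f_\beta(\beta'\zb)\ge c/2$, so dividing by $\hat f_\beta(\beta'\zb)$ costs only a constant factor.

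For the numerator I would split $\hat m_{\phi,\beta}(\zb)-m_{\phi,\beta}(\zb)\hat f_\beta(\beta'\zb)$ as
$$\{\hat m_{\phi,\beta}(\zb)-E\hat m_{\phi,\beta}(\zb)\}-m_{\phi,\beta}(\zb)\{\hat f_\beta(\beta'\zb)-E\hat f_\beta(\beta'\zb)\}+\{E\hat m_{\phi,\beta}(\zb)-m_{\phi,\beta}(\zb)\,E\hat f_\beta(\beta'\zb)\}.$$
The first bracket is $(n\tilde h)^{-1}\sum_j\lambda(\XX_j,\ZZ_j)\tilde K((\beta'\ZZ_j-\beta'\zb)/\tilde h)$ minus its mean with $\lambda=\phi(\cdot,\zb)$; since $\mathcal{A}$ satisfies Assumption~\ref{aclass}, Lemma~\ref{e_mason}, applied uniformly over $\phi\in\mathcal{A}$ and $\zb\in\Zc$, bounds it by $O_P([\log n]^{1/2}n^{-1/2}\tilde h^{-1/2})$; the second bracket is treated identically because $|m_{\phi,\beta}(\zb)|\le1$. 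For the bias bracket, the same change of variables yields $\int \tilde K(t)\{\bar m_{\phi,\beta}(\zb,\beta'\zb+\tilde h t)-\bar m_{\phi,\beta}(\zb,\beta'\zb)\}f_\beta(\beta'\zb+\tilde h t)\,dt$ with $\bar m_{\phi,\beta}(\zb,y):=E[\phi(\XX,\zb)\,|\,\beta'\ZZ=y]$, and a Taylor expansion of $y\mapsto\bar m_{\phi,\beta}(\zb,y)f_\beta(y)$ about $\beta'\zb$, combined with the vanishing moments of the order-$\tilde s$ kernel and the uniformly bounded derivatives from Assumptions~\ref{aclass} and~\ref{abounded}, gives $O(\tilde h^{\tilde s})$ uniformly. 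Collecting the three bounds and using the denominator bound produces the announced rate $O_P(\tilde h^{\tilde s}+[\log n]^{1/2}n^{-1/2}\tilde h^{-1/2})$.

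The only genuinely delicate step is making the stochastic term uniform in $\phi$: one must verify that the class $\{(\xx,\zb')\mapsto\phi(\xx,\zb)\,\tilde K((\beta'\zb'-\beta'\zb)/\tilde h):\phi\in\mathcal{A},\,\beta\in\Bc,\,\zb\in\Zc\}$ still has polynomially growing covering numbers, so that the concentration argument (Talagrand's inequality plus the Rademacher maximal bound) used in the proof of Lemma~\ref{e_mason} applies verbatim. This follows for the usual reasons: the sections $\{\phi(\cdot,\zb):\phi\in\mathcal{A},\zb\in\Zc\}$ inherit the polynomial covering bound from $\mathcal{A}$, the kernel class is polynomial by Lemma~22 in Nolan and Pollard (1987), and a product of two such classes is polynomial by Lemma~A.1 in Einmahl and Mason (2000); moreover in the application to~(\ref{unif_const1}) one takes $\phi(\xx',\zb')=\1(\xx'\le\xx)$, which does not depend on its second argument, so the point is immediate there. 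Everything else is routine Nadaraya--Watson bias/variance bookkeeping.
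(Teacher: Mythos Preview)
Your proof is correct and follows essentially the same route as the paper: write the integral as a Nadaraya--Watson ratio, control the stochastic fluctuation of numerator and denominator via Lemma~\ref{e_mason}, handle the bias by a Taylor expansion exploiting the order-$\tilde s$ kernel, and conclude using $\inf_{\beta,\zb}f_\beta(\beta'\zb)>c$. The paper is terser---it bundles your three-term numerator split into ``classical arguments on kernel estimators''---but the substance is identical.

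One small point worth flagging: you evaluate $\phi$ at $(\XX_j,\zb)$ with the \emph{fixed} outer $\zb$, which is indeed what the literal statement $\int\phi(\xx,\zb)\hat F_\beta(d\xx\,|\,\beta'\zb)$ dictates. The paper's own proof instead writes $\phi(\XX_i,\ZZ_i)$, consistently with the definition $m_\phi(y)=E[\phi(\XX,\ZZ)\,|\,\beta'\ZZ=y]$ in Assumption~\ref{aclass} and with the later applications (e.g.\ $\phi(\XX,\ZZ)=\1(\XX\leq\xx)(\ZZ-\zb)$ for the derivative terms). With the paper's reading, Lemma~\ref{e_mason} applies directly with $\mathcal{L}=\mathcal{A}$ and no extra sectioning argument is needed; with your reading you correctly identify that one must check the class of sections $\{\phi(\cdot,\zb):\phi\in\mathcal{A},\zb\in\Zc\}$ still has polynomial covering numbers before multiplying by the kernel class. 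Either reading leads to the same rate, so this is a notational wrinkle rather than a gap.
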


\begin{proof}
Write
$$\hat m_{\phi}(\beta'\zb):=\int \phi(\xx,\zb)\hat{F}_\beta(d\xx | \beta'\zb)=\frac{1}{n\tilde h\hat{f}_{\beta}(\beta'\zb)}\sum_{i=1}^n \phi(\XX_i,\ZZ_i)\tilde K\left(\frac{\beta'\ZZ_i-\beta'\zb}{\tilde h}\right),$$
where
\begin{equation}
\hat{f}_{\beta}(\beta'\zb)=\frac{1}{n\tilde h}\sum_{i=1}^n \tilde K\left(\frac{\beta'\ZZ_i-\beta'\zb}{\tilde h}\right),
\label{def_kernel_density}
\end{equation}
is an estimator of the density $f_{\beta}(\beta'\zb)$ of $\beta'\ZZ$ evaluated at $\beta'\zb.$
Let $$\hat{\frak{m}}_{\phi}(\beta'\zb)=\frac{1}{n\tilde h}\sum_{i=1}^n \phi(\XX_i,\ZZ_i)\tilde K\left(\frac{\beta'\ZZ_i-\beta'\zb}{\tilde h}\right)=
\hat m_{\phi}(\beta'\zb)\hat f_{\beta}(\beta'\zb),$$
and $\frak{m}_{\phi}(\beta'\zb)=m_{\phi}(\beta'\zb)f_{\beta}(\beta'\zb).$
It follows from Lemma \ref{e_mason} that
$$\sup_{\beta,\zb,\phi}|\hat{\frak{m}}_{\phi}(\beta'\zb)-E[\hat{\frak{m}}_{\phi}(\beta'\zb)]|+\sup_{\beta,\zb}|\hat{f}_{\beta}(\beta'\zb)-E[\hat{f}_{\beta}(\beta'\zb)]|
=O_P(\frac{[\log n]^{1/2}}{n^{1/2}\tilde h^{1/2}}).$$
Moreover, using classical arguments on kernel estimators (and Assumptions \ref{aclass} and \ref{akernel}), we have
$$\sup_{\beta,\zb,\phi}|E[\hat{\frak{m}}_{\phi}(\beta'\zb)]-\frak{m}_{\phi}(\beta'\zb)|+\sup_{\beta,\zb}|E[\hat{f}_{\beta}(\beta'\zb)]-f_{\beta}(\beta'\zb)|=O(\tilde h^{\tilde s}).$$
The result of the Lemma follows from the fact that the density $f_{\beta}(\beta'\zb)$ is bounded away from zero by Assumption \ref{abounded}.
\end{proof}

Lemma \ref{consist_F} allows to verify condition (\ref{unif_const1}) by considering $\phi(\xx,\zb)=\1(\xx\leq \xx_0)$, for some constant vectors $\xx_0$. This shows that, in this case,
$\varepsilon_{n,0}=\tilde h^{\tilde s}+[\log n]^{1/2}n^{-1/2}\tilde h^{-1/2}.$ It also permits to obtain the uniform consistency rates for $\nabla_{\beta}^j \hat{F}_{\beta}$ for $j=1,2,$ with
$$ \varepsilon_{n,1} = \tilde h^{\tilde s} +\frac{[\log n]^{1/2}}{n\tilde h^{3/2}}, \;\varepsilon_{n,2} = \tilde h^{\tilde s} +\frac{[\log n]^{1/2}}{n\tilde h^{5/2}}.$$
Indeed,
$$\nabla_{\beta} \hat{\frak{m}}_{\phi}(\beta'\zb)=\frac{1}{n\tilde h^2}\sum_{i=1}^n \1(\XX_i\leq \xx).(\ZZ_i-\zb) \tilde K'\left(\frac{\beta'\ZZ_i-\beta'\zb}{\tilde h}\right),$$ and the convergence of this term can be studied using Lemma \ref{consist_F}, but replacing $\tilde K$ by $\tilde K',$ and setting $\phi(\XX,\ZZ)=\1(\XX \leq \xx).(\ZZ-\zb)$. The latter function is indexed by $(\xx,\zb)$ that
lives into $\Rb^d \times \Zc$, defining the convenient class $\Ac$ to apply Lemma~\ref{consist_F}.
The other terms obtained by differentiation can be studied in the same way.

\mds

Hence, the latter results allow to verify whether Assumptions~\ref{hess} and~\ref{dbis} hold. Indeed, under some (light) conditions of regularity, we have obtained that
$$\sup_{\beta \in \Bc,\zb\in \Zc}|\hat{\tau}(\beta,\beta'\zb)-\tau(\beta,\beta'\zb)|=O_P(\tilde h^{\tilde s}+[\log n]^{1/2}n^{-1/2}\tilde{h}^{-1/2}),$$
$$\sup_{\beta\in \Bc,\zb\in \Zc}|\nabla_{\beta}\hat{\tau}(\beta,\beta'\zb)-\nabla_{\beta}\tau(\beta,\beta'\zb)|=O_P(\tilde h^{\tilde s}+[\log n]^{1/2}n^{-1/2}\tilde h^{-3/2}),\;\text{and}$$
$$\sup_{\beta\in \Bc,\zb \in \Zc}|\nabla_{\beta}^2 \hat{\tau}(\beta,\beta'\zb)-\nabla_{\beta}^2 \tau(\beta,\beta'\zb)|=O_P (\tilde h^{\tilde s}+[\log n]^{1/2}n^{-1/2}\tilde h^{-5/2}).$$

\mds

\section{A short simulation study}

To investigate the finite sample behavior of our procedure, we consider the following simulation setting. Let $\ZZ\in \mathbb{R}^4$ be a random vector with independent margins that are uniformly distributed over $[0,1].$ The conditional (marginal) distributions of $X_1$ and $X_2$ given $\ZZ=\zb$ are respectively taken as
\begin{equation*}
X_1|\ZZ=\zb \sim  \mathcal{N}(\alpha'\zb,1), \;\;X_2|\ZZ=\zb \sim  \mathcal{N}(\alpha'\zb,1),
\end{equation*}
with $\alpha=(0.2,-0.3,0.1,0.1)'.$
Concerning the conditional copula model, we consider the case where the dependence structure of $(X_1,X_2)$ given $\ZZ=\zb$ is a Clayton copula of parameter $\beta_0'\zb.$
We recall that a Clayton copula with a parameter $\theta >0$ is defined as $C_{\theta}(u,v)=(u^{-\theta}+v^{-\theta}-1)^{-1/\theta}.$
We set $\beta_0=(\beta_{0,1},\beta_{0,2},\beta_{0,3},\beta_{0,4})=(1,0.7,1.2,0.8)'.$ We then assume
$$\theta(\zb)=\psi(\beta_0,\beta_0'\zb)=\frac{\beta_0'\zb/3.7}{1-\beta_0'\zb/7.4}\cdot$$
Therefore, the values of the random parameter $\theta(\ZZ)$ belong to $(0,2)$.
The sample size is $n=1000.$ We consider $1000$ replications of this simulation scheme. For each simulated sample, we compute our estimator $\hat{\beta}$ as defined in~(\ref{true_criterion}). The conditional margins are estimated using a kernel estimator as in (\ref{KernelFk}), with products of univariate Gaussian kernels $K_k=\Phi$, $k=1,\ldots,4$.
The bandwidth used in the estimation of the margins is taken as $h=0.12$ (Scott's rule in $\Rb^4$: see Scott 2015, p.164). On the other hand,
we consider different values of the bandwidth for computing the conditional Kendall's $\tau$ estimators, to understand its impact on the performances. To measure the estimation errors, for each simulated sample and each value of the bandwidth, we introduce
$$e(\hat{\beta})=\frac{1}{3}\sum_{i=2}^4\frac{|\hat{\beta}_i-\beta_{0,i}|}{|\beta_{0,i}|}\cdot$$
The latter number measures the mean relative absolute distance between the true parameter $\beta_0$ and its estimate.
The mean value of $e(\hat{\beta})$ over the 1000 replications is then calculated, providing $\bar e(\hat{\beta})$.
For a grid of realistic bandwidths between $0.03$ and $0.2$, the obtained range of $\bar e(\hat{\beta})$ is $[1.3\%, 6.1\%]$, with an average value of $3.5\%$.
Such error levels that are very reasonable.
As expected, there is an impact of the bandwidth on the practical behavior. However, we focus here on estimating $\beta_0,$ which is relatively weakly sensitive to the choice of the bandwidth. Indeed, it is based on the maximization of a criterion where the nonparametric kernel estimators are averaged.

\mds

\begin{center}
{\bf Acknowledgements}
\end{center}
\noindent
The authors thank Axel B\"ucher, Holger Dette, Christian Genest, Johan Segers, Alexandre Tsybakov, Stanislav Volgushev for helpful discussions, and
numerous seminar participants, particularly at the 2015 European Meeting of Statisticians in Amsterdam, and at the SFB 2015 workshop in Bochum.
This research has been supported by the Labex Ecodec.

\mds

\mds

Jean-David Fermanian, Ensae-Crest, J120, 3 av. Pierre Larousse, 92245 Malakoff cedex. France.
E-mail: jean-david.fermanian@ensae.fr

\appendix

\section{Technical lemmas}

\begin{lemma}
\label{tricky2}
Consider an integrable function $\chi$ on $(0,1)^d \times \Zc$.
Assume that there exist two deterministic sequences $(\xi_n)$ and $(\delta_n)$, $\xi_n\rightarrow 0$, $\delta_n = o(\nu_n)$, s.t.
$ \Pb \left( \sup_i|\hat\UU_i - \UU_i|> 2 \delta_n ,\ZZ_i\in\Zc\right)\longrightarrow 0$ when $n\rightarrow \infty$, and
\beq
E\left[ |\chi(\UU_i,\ZZ_i)| \cdot \1(\ZZ_i\in\Zc)\{ \1(|U_{i,k} -\nu_n | \leq  2\delta_n)+ \1(|1 -\nu_n -U_{i,k} | \leq  2\delta_n) \} \right]
\leq \xi_n ,
\label{condBoundary}
\eeq
for all $k=1,\ldots,d$. Then
$n^{-1}\sum_{i=1}^n |\chi(\UU_i,\ZZ_i).(\hat\omega_{i,n}-\omega_{i,n})|=O_P(\xi_n)$.
\end{lemma}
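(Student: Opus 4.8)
The plan is to reproduce, in quantitative form, the switch from $\hat\omega_{i,n}$ to $\omega_{i,n}$ already carried out in the proof of Theorem~\ref{Th_consistency}. Since both $\hat\omega_{i,n}$ and $\omega_{i,n}$ carry the factor $\1(\ZZ_i\in\Zc)$, the difference $\hat\omega_{i,n}-\omega_{i,n}$ vanishes unless $\ZZ_i\in\Zc$ and exactly one of $\UU_i,\hat\UU_i$ belongs to $\Ec_n=[\nu_n,1-\nu_n]^d$. First I would fix $A>0$ and write, with
$$
S_n:=\frac1n\sum_{i=1}^n |\chi(\UU_i,\ZZ_i)|\,|\hat\omega_{i,n}-\omega_{i,n}|\,\1(|\hat\UU_i-\UU_i|\le 2\delta_n),
$$
the bound $\Pb\big(n^{-1}\sum_i |\chi(\UU_i,\ZZ_i)(\hat\omega_{i,n}-\omega_{i,n})| > A\xi_n\big) \le \Pb\big(\sup_i|\hat\UU_i-\UU_i|>2\delta_n,\ZZ_i\in\Zc\big) + \Pb\big(S_n > A\xi_n\big)$, which is valid because, off the first event, every index $i$ either has $\ZZ_i\notin\Zc$ (then $\hat\omega_{i,n}-\omega_{i,n}=0$) or has $|\hat\UU_i-\UU_i|\le 2\delta_n$. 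The first term on the right tends to $0$ by hypothesis, so everything reduces to a moment bound on $S_n$.

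To control $S_n$ I would use the elementary inclusion already exploited in Section~\ref{consistency}: for any $i$ with $\ZZ_i\in\Zc$ and $|\hat\UU_i-\UU_i|\le 2\delta_n$, if exactly one of $\UU_i,\hat\UU_i$ lies in $\Ec_n$, then some coordinate $k$ satisfies $U_{i,k}\notin[\nu_n,1-\nu_n]$ while $\hat U_{i,k}\in[\nu_n,1-\nu_n]$, or vice versa, which, since $|\hat U_{i,k}-U_{i,k}|\le 2\delta_n$, forces $U_{i,k}$ to lie within $2\delta_n$ of $\nu_n$ or of $1-\nu_n$. Hence
$$
|\hat\omega_{i,n}-\omega_{i,n}|\,\1(|\hat\UU_i-\UU_i|\le 2\delta_n) \le \1(\ZZ_i\in\Zc)\sum_{k=1}^d\big\{\1(|U_{i,k}-\nu_n|\le 2\delta_n)+\1(|1-\nu_n-U_{i,k}|\le 2\delta_n)\big\}.
$$
Multiplying by $|\chi(\UU_i,\ZZ_i)|$ and averaging, the i.i.d.\ structure together with~(\ref{condBoundary}) yields $E[S_n]\le d\,\xi_n$, whence $\Pb(S_n>A\xi_n)\le d/A$ by Markov's inequality, uniformly in $n$.

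Putting the two pieces together: for every $\ep>0$, choosing $A$ with $d/A\le \ep$ gives $\Pb\big(n^{-1}\sum_i |\chi(\UU_i,\ZZ_i)(\hat\omega_{i,n}-\omega_{i,n})| > A\xi_n\big)\le \ep+o(1)$, which is precisely the claim $n^{-1}\sum_i |\chi(\UU_i,\ZZ_i)(\hat\omega_{i,n}-\omega_{i,n})|=O_P(\xi_n)$. I do not expect a genuine obstacle here, the statement being essentially a refinement of a step done for Theorem~\ref{Th_consistency}; the one point that deserves a line is that ``$\chi$ integrable on $(0,1)^d\times\Zc$'' has to be read as $E[\,|\chi(\UU_{\ZZ},\ZZ)|\,\1(\ZZ\in\Zc)\,]<\infty$, which is exactly what makes the passage to expectations (and the use of~(\ref{condBoundary})) legitimate.
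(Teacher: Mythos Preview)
Your argument is correct and essentially identical to the paper's: both split off the event $\{\sup_i|\hat\UU_i-\UU_i|>2\delta_n,\ZZ_i\in\Zc\}$, use the same boundary inclusion to dominate $|\hat\omega_{i,n}-\omega_{i,n}|\,\1(|\hat\UU_i-\UU_i|\le 2\delta_n)$ by the indicators appearing in~(\ref{condBoundary}), and finish with Markov's inequality. The only difference is cosmetic---you bound $E[S_n]$ directly and apply Markov once, while the paper first splits the sum into $2d$ pieces and applies Markov to each, leading to the same conclusion with a different harmless constant.
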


\begin{proof}
Let us fix $\ep>0$. For any constant $A>0$, we have
\bqa
\lefteqn{    \Pb \left( \frac{1}{n}\sum_{i=1}^n |\chi(\UU_i,\ZZ_i)| \cdot |\hat\omega_{i,n}-\omega_{i,n}| > A \xi_n \right) } \\
&\leq &
   \Pb \left( \frac{1}{n}\sum_{i=1}^n |\chi(\UU_i,\ZZ_i)| \cdot |\hat\omega_{i,n}-\omega_{i,n}|\cdot \1(|\hat\UU_i - \UU_i| \leq 2 \delta_n) > A\xi_n \right)  \\
 & + & \Pb \left( \sup_i|\hat\UU_i - \UU_i|> 2 \delta_n,\ZZ_i\in\Zc \right) :=\Pb_1 + \Pb_2.
\eqa
First, we have
\bqa
\lefteqn{ \Pb_1 \leq
\sum_{k=1}^d \Pb \left( \frac{1}{n}\sum_{i=1}^n |\chi(\UU_i,\ZZ_i)| \cdot \1(\ZZ_i\in\Zc, |U_{i,k} -\nu_n | \leq  |\hat U_{i,k} -U_{i,k} |
 ) > A \xi_n /(2d) \right) }\\
&+ &
\sum_{k=1}^d \Pb \left( \frac{1}{n}\sum_{i=1}^n |\chi(\UU_i,\ZZ_i)| \cdot \1(\ZZ_i\in\Zc, |1 -\nu_n -U_{i,k}| \leq  |\hat U_{i,k} -U_{i,k} |
 ) >A\xi_n /(2d) \right) \\
&\leq &  \frac{2d}{A \xi_n} \sum_{k=1}^d E\left[
|\chi(\UU_i,\ZZ_i)| \cdot \1(\ZZ_i\in\Zc).\{\1 (|U_{i,k} -\nu_n | \leq  2\delta_n)+ \1 (|1 -\nu_n -U_{i,k} | \leq  2\delta_n) \} \right] \\
&\leq & 2d /A,
\eqa
that is less than $\ep>0$ for $A$ large enough. This means $\Pb_1=O_P(\xi_n)$. Second, by assumption, $\Pb_2$ is less than $\ep $ when $n$ is sufficiently large, proving the result.
\end{proof}

\bre
In particular, it is tempting to define, with obvious notations,
\bqa
\lefteqn{ \xi_n :=\sup_k E[\sup_{u_k, |u_k - \nu_n|\leq 2\delta_n}
|\chi(u_k,\UU_{i,-k},\ZZ_i)|\cdot \1(\ZZ_i\in\Zc)] }\\
&+& \sup_k E[\sup_{u_k, |u_k - 1 + \nu_n|\leq 2\delta_n}
|\chi(u_k,\UU_{i,-k},\ZZ_i)|\cdot \1(\ZZ_i\in\Zc)],
\eqa
or even, when it tends to zero,
\bqa
\lefteqn{  \xi_n:= \sup_k\sup_{u_k, |u_k - \nu_n|\leq 2\delta_n}\sup_{\uu_{-k} \in [\nu_n-2\delta_n,1-\nu_n+2\delta_n]^{d-1}}  \sup_{\zb\in\Zc}
|\chi(u_k,\uu_{-k},\zb)| }\\
&+&
\sup_k\sup_{u_k, |u_k - 1+\nu_n|\leq 2\delta_n}\sup_{\uu_{-k} \in [\nu_n-2\delta_n,1-\nu_n+2\delta_n]^{d-1}}  \sup_{\zb\in\Zc}
|\chi(u_k,\uu_{-k},\zb)|.
\eqa

\ere

\begin{lemma}
\label{lemmahessian}
Under the assumptions of Theorem \ref{asr},
$$\sup_{\beta\in \Bc}|\nabla^2_{\beta} M_n(\beta)-\nabla^2_{\beta}M(\beta)|=o_P(1).$$
\end{lemma}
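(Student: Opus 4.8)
The plan is to reproduce the architecture of the proof of Theorem~\ref{Th_consistency}, one $\beta$-derivative higher. Differentiating $M_n$ twice under the finite sum, the chain rule gives, for each $i$,
$$
\nabla^2_\beta\ln c_{\hat\psi(\beta,\beta'\zb_i)}(\hat\UU_i) = \frac{\nabla_{\theta}c_{\theta}}{c_{\theta}}_{|\theta=\hat\psi(\beta,\beta'\zb_i)}(\hat\UU_i)\,\nabla^2_\beta\hat\psi(\beta,\beta'\zb_i) + \nabla^2_\theta(\ln c_{\theta})_{|\theta=\hat\psi(\beta,\beta'\zb_i)}(\hat\UU_i)\,\nabla_\beta\hat\psi(\beta,\beta'\zb_i)\,\nabla_{\beta'}\hat\psi(\beta,\beta'\zb_i).
$$
The program is to replace, one substitution at a time and uniformly in $\beta\in\Bc$, the quantities $\nabla_\beta\hat\psi,\nabla^2_\beta\hat\psi$ by $\nabla_\beta\psi,\nabla^2_\beta\psi$, then the $\theta$-slot $\hat\psi(\beta,\beta'\zb_i)$ by $\psi(\beta,\beta'\zb_i)$, then $\hat\UU_i$ by $\UU_i$, then $\hat\omega_{i,n}$ by $\omega_i$, each step producing an $o_P(1)$ error; what remains is exactly $\nabla^2_\beta M^{**}_n(\beta)$, the Hessian of the i.i.d.\ average $M^{**}_n$ of the consistency proof, to which a uniform law of large numbers applies, its limit being $\nabla^2_\beta M(\beta)$ (the interchange of $\nabla^2_\beta$ and $E[\cdot]$ being licit by the domination below, consistently with Assumption~\ref{sigma}).

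\emph{The integrand substitutions.} As in the consistency proof, split at the top according to whether $\sup_i|\hat\UU_i-\UU_i|\le 2\delta_n$, the complementary event having probability $o(1)$ by~(\ref{unif_uu}); on the good event, and since $\delta_n=o(\nu_n)$, the reproducing property yields $r_k(\hat U_{i,k})\le r_{k,1/2}(U_{i,k})$ for the u-shaped functions $r,r_3,\Gamma_k$ that dominate, respectively, $\nabla_\theta\ln c_\theta$, $\nabla^3_\theta\ln c_\theta$ and $\partial_{u_k}\nabla_\theta\ln c_\theta+\partial_{u_k}\nabla^2_\theta\ln c_\theta$ (Assumptions~\ref{A_moments},~\ref{hessc},~\ref{uchapeau}). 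The substitutions in the $\beta$-derivatives and in the $\theta$-slot are then controlled by Assumption~\ref{hess} (the $o_P(1)$ sup-norms of $\hat\psi_i-\psi_i$, $\nabla_\beta\hat\psi_i-\nabla_\beta\psi_i$, $\nabla^2_\beta\hat\psi_i-\nabla^2_\beta\psi_i$), by the mean value theorem in $\theta$ together with the Lipschitz bounds~(\ref{lip1})--(\ref{lip2}) and the $r_3$-domination of $\nabla^3_\theta\ln c_\theta$ (Assumption~\ref{hessc}), and by the boundedness of $\nabla_\beta\psi,\nabla^2_\beta\psi$ on the compact $\Bc\times\Zc$ (Assumptions~\ref{dini},~\ref{lipschitz}); each difference is then bounded uniformly in $\beta$ by one of those sup-norms times an average of integrable envelopes, hence $o_P(1)$. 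For the $\hat\UU_i\to\UU_i$ step one performs a first-order Taylor expansion in $\uu$, bounds the resulting increments of $\frac{\nabla_{\theta}c_{\theta}}{c_{\theta}}$ and of $\nabla^2_\theta\ln c_\theta$ (whose $\uu$-derivative exists by Assumption~\ref{dini}) by $\sum_k\Gamma_k$ evaluated at an intermediate $\uu$-value (passing from $\Gamma_k(\UU_i^*)$ to $\Gamma_{k,1/2}(\UU_i)$ by the reproducing property), and uses $|\hat U_{i,k}-U_{i,k}|\,\1(|\hat\UU_i-\UU_i|\le2\delta_n)\,\omega_{i,n}\le C_\alpha\,U_{i,k}^\alpha(1-U_{i,k})^\alpha\,\delta_n^{1-\alpha}$ together with $\delta_n\to0$ and the moment bound of Assumption~\ref{uchapeau} to make this contribution $o_P(1)$, exactly as in the treatment of $R'_{1n}$ in the proof of Theorem~\ref{asr}.

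\emph{The trimming substitution and the uniform LLN.} Apply Lemma~\ref{tricky2} with $\chi(\uu,\zb):=r(\uu)\sup_{\Bc\times\Zc}|\nabla^2_\beta\psi|+\sup_{\theta}|\nabla^2_\theta\ln c_\theta(\uu)|\,\sup_{\Bc\times\Zc}|\nabla_\beta\psi|^2$, whose boundary-integrability condition~(\ref{condBoundary}) follows from Assumptions~\ref{A_moments} and~\ref{hessc}; combined with the elementary bound on $\1(\UU_i\in\Ec_n,\hat\UU_i\notin\Ec_n)+\1(\UU_i\notin\Ec_n,\hat\UU_i\in\Ec_n)$ used in the consistency proof and with $n_i/n$ tending to a positive constant a.e., this reduces the problem to $\sup_\beta|\nabla^2_\beta M^{**}_n(\beta)-\nabla^2_\beta M(\beta)|=o_P(1)$. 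The latter follows from Lemma~2.4 in Newey and McFadden (1994) applied to $(\uu,\zb,\beta)\mapsto\nabla^2_\beta\ln c_{\psi(\beta,\beta'\zb)}(\uu)$: it is continuous in $\beta$ by Assumption~\ref{dini}, and it is dominated on $\{\ZZ\in\Zc\}$ by the integrable envelope $\chi$ above. Dominated convergence also gives the continuity of $\beta\mapsto\nabla^2_\beta M(\beta)$, so everything is consistent with Assumption~\ref{sigma}.

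The main obstacle is the $\hat\UU_i\to\UU_i$ substitution: it is the only place where the blow-up of the copula score $\nabla_\theta\ln c_\theta$ and of its $\theta$-Hessian $\nabla^2_\theta\ln c_\theta$ near the boundary of $[0,1]^d$ interacts with the shrinking trimming $\nu_n$, forcing the joint use of the reproducing-u-shaped-function calculus (via $\delta_n=o(\nu_n)$) and of Assumption~\ref{uchapeau}; this is the same delicate estimate carried out—one derivative lower—in the proof of Theorem~\ref{Th_consistency} and for $R'_{1n}$ in the proof of Theorem~\ref{asr}, so no genuinely new difficulty arises.
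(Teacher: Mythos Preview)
Your proposal is correct and follows essentially the same architecture as the paper's own proof: the same chain-rule decomposition into the two Hessian pieces, the same sequence of substitutions ($\hat\psi\to\psi$, $\nabla_\beta\hat\psi\to\nabla_\beta\psi$, $\hat\UU_i\to\UU_i$, $\hat\omega_{i,n}\to\omega_i$) controlled by Assumptions~\ref{hessc},~\ref{hess},~\ref{uchapeau} and Lemma~\ref{tricky2}, and the same appeal to a uniform LLN at the end. The only cosmetic difference is that the paper cites van der Vaart's Lipschitz Glivenko--Cantelli result (Example~19.7 and Theorem~19.4) rather than Newey--McFadden's Lemma~2.4 for the final step, and references the $T_2(\beta)$ argument of Theorem~\ref{Th_consistency} rather than the $R'_{1n}$ bound for the $\hat\UU_i\to\UU_i$ substitution; both pairs of tools are interchangeable here.
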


\begin{proof}
We have
\begin{eqnarray*}
\frac{n_i+1}{n}\nabla^{2}_{\beta}M_n(\beta) &=& \frac{1}{n}\sum_{i=1}^n \nabla_{\theta}(\ln c_{\theta})_{|\theta= \hat\psi_i} (\hat\UU_i)
\nabla^2_{\beta} \hat{\psi}(\beta,\beta'\ZZ_i)\,\hat\omega_{i,n} \\
&&+ \frac{1}{n}\sum_{i=1}^n \nabla^2_{\theta}(\ln c_{\theta})_{|\theta=\hat{\psi}_i}(\hat \UU_i) \nabla_{\beta} \hat{\psi}_i\nabla_{\beta'} \hat{\psi}_i \,\hat\omega_{i,n} \\
&=: & B_{1,n}(\beta)+B_{2,n}(\beta).
\end{eqnarray*}
\bqa
\lefteqn{ B_{1,n}(\beta)-\frac{1}{n}\sum_{i=1}^n \frac{\nabla_{\theta}c_{\theta}}{c_{\theta}}_{|\theta= \psi_i} (\UU_i)
\nabla^2_{\beta} \hat{\psi}(\beta,\beta'\ZZ_i)\hat\omega_{i,n}
 =  \frac{1}{n}\sum_{i=1}^n \left[ \nabla_{\theta}(\ln c_{\theta})_{|\theta= \hat\psi_i} (\hat\UU_i) \right. }\\
&-& \left. \nabla_{\theta}(\ln c_{\theta})_{|\theta= \hat\psi_i} (\UU_i)
+\nabla_{\theta}(\ln c_{\theta})_{|\theta= \hat\psi_i} (\UU_i) -
\nabla_{\theta}(\ln c_{\theta})_{|\theta= \psi_i} (\UU_i) \right]
\nabla^2_{\beta} \hat{\psi}(\beta,\beta'\ZZ_i)\,\hat\omega_{i,n} \\
&=&
 \frac{1}{n}\sum_{i=1}^n \left[ \nabla^2_{\uu,\theta}(\ln c_{\theta})_{|\theta= \hat\psi_i} (\UU^*_i).(\hat\UU_i - \UU_i)
+  \nabla^2_{\theta,\theta}(\ln c_{\theta})_{|\theta= \psi^*_i} (\UU_i).(\hat\psi_i - \psi) \right]
\nabla^2_{\beta} \hat{\psi}(\beta,\beta'\ZZ_i)\,\hat\omega_{i,n},
\eqa
for some $\UU^*_i$ and $\psi_i^*$ s.t.
$$ |\UU_i^* - \UU_i |<  |\hat\UU_i - \UU_i |,\;\; |\psi_i^* - \psi_i | < |\hat\psi - \psi_i|.$$
From Assumption~\ref{uchapeau} and with the same arguments as in the proof of Theorem~\ref{Th_consistency} (see the term $T_2(\beta)$), we get
$$  \sup_\beta | \frac{1}{n}\sum_{i=1}^n   \nabla^2_{\uu,\theta}(\ln c_{\theta})_{|\theta= \hat\psi_i} (\UU^*_i).(\hat\UU_i - \UU_i)
\nabla^2_{\beta} \hat{\psi}(\beta,\beta'\ZZ_i)\,\hat\omega_{i,n}| = o_P(1).$$
From Assumptions~\ref{hessc} and the uniform consistency of $\hat{\psi}(\beta,\beta'\zb)$ (see~(\ref{unifcons3})), we have
$$ \sup_\beta |
 \frac{1}{n}\sum_{i=1}^n \nabla^2_{\theta,\theta}(\ln c_{\theta})_{|\theta= \psi^*_i} (\UU_i).(\hat\psi_i - \psi)
\nabla^2_{\beta} \hat{\psi}(\beta,\beta'\ZZ_i)\,\hat\omega_{i,n}|=o_P(1),$$
and we deduce
$$ \sup_{\beta\in \Bc} |  B_{1,n}(\beta)-\frac{1}{n}\sum_{i=1}^n \frac{\nabla_{\theta}c_{\theta}}{c_{\theta}}_{|\theta= \psi_i} (\UU_i)
\nabla^2_{\beta} \hat{\psi}(\beta,\beta'\ZZ_i)\hat\omega_{i,n}|=o_P(1),$$
Invoking Assumption \ref{hess}, equation (\ref{unifcons3}), we get
$$\sup_{\beta\in \Bc} |  B_{1,n}(\beta)-\frac{1}{n}\sum_{i=1}^n \frac{\nabla_{\theta}c_{\theta}}{c_{\theta}}_{|\theta= \psi_i} (\UU_i)
\nabla^2_{\beta}\psi(\beta,\beta'\ZZ_i)\hat\omega_{i,n} |=o_P(1).$$
Since the score function is uniformly integrable (Assumption~\ref{A_moments}) and applying Lemma~\ref{tricky2} (or the dominated convergence theorem simply),
we can replace $\hat\omega_{i,n}$ by $\omega_i$.
Therefore, $\sup_\beta | B_{1,n}(\beta) - B_1(\beta)|= o_P(1)$, with
$$B_{1}(\beta)=\frac{1}{n}\sum_{i=1}^n \frac{\nabla_{\theta}c_{\theta}}{c_{\theta}}_{|\theta= \psi_i} (\UU_i)
\nabla^2_{\beta}\psi(\beta,\beta'\ZZ_i)\omega_{i}.$$

\mds

Similarly, one can deduce from Assumptions~\ref{uchapeau} and~\ref{hess} that $\sup_\beta |B_{2,n}(\beta) - B_2(\beta)|
 = o_P(1)$, with
$$B_2(\beta)=\frac{1}{n}\sum_{i=1}^n \nabla^2_{\theta,\theta}(\ln c_{\theta})_{|\theta=\psi_i}( \UU_i) \nabla_{\beta} \psi(\beta,\beta'\ZZ_i)\nabla_{\beta'} \psi(\beta,\beta'\ZZ_i)\omega_i.$$
From Assumption \ref{lipschitz} and (\ref{lip1}) and (\ref{lip2}) in Assumption \ref{hessc}, we can apply Example 19.7  and Theorem 19.4 in van der Vaart (2007) to deduce that
$$\sup_{\beta \in \Bc}|B_1(\beta)-E[B_1(\beta)]+B_2(\beta)-E[B_2(\beta)]|=o_P(1).$$
Since $(n_i+1)/n$ tends to $\Pb(\ZZ\in \Zc)$ a.e. and $\Sigma = \{E[B_1(\beta)] + E[B_2(\beta)]\}/\Pb(\ZZ\in \Zc)$, we obtain the result.
\end{proof}

\mds


\begin{lemma}
\label{tricky1}
Let $c_0(\uu,v)$ denote the first order partial derivative of $C^M_{\beta_0}(\uu|w)$ with respect to $w$ evaluated at point $w=v,$ where $C^{M}_{\beta}(\uu|w)$ denotes the conditional copula function of $\UU$ conditionally to $\beta'\ZZ$ and $\|\ZZ\|_{\infty}\leq M$ (that is $\ZZ\in \Zc$). We have
$$\nabla_{\beta}C_{\beta}(\uu|\beta'\ZZ)_{|\beta=\beta_0}=c_0(\uu,\beta_0'\ZZ)\left(\ZZ-E\left[\ZZ|\beta_0'\ZZ,\ZZ\in \Zc\right]\right).$$
\end{lemma}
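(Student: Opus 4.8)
The plan is to compute $\nabla_{\beta}C^{M}_{\beta}(\uu|\beta'\zb)$ at $\beta=\beta_0$ for a fixed $\zb$, and then substitute $\zb=\ZZ$. The one ingredient to exploit is the single-index assumption~(\ref{single_index_cop}): conditionally on $\ZZ=\zb$, the law of $\UU_{\ZZ}$ is $C_{\psi(\beta_0,\beta_0'\zb)}$, so $\Pb(\UU_{\ZZ}\le \uu\,|\,\ZZ=\zb)=C_{\psi(\beta_0,\beta_0'\zb)}(\uu)$ depends on $\zb$ through $\beta_0'\zb$ only. Setting $\tilde g(\uu,t):=C_{\psi(\beta_0,t)}(\uu)$, we have $\Pb(\UU_{\ZZ}\le\uu\,|\,\ZZ)=\tilde g(\uu,\beta_0'\ZZ)$ and, for every $\beta$,
\[ C^{M}_{\beta}(\uu|w)=E\big[\tilde g(\uu,\beta_0'\ZZ)\,\big|\,\beta'\ZZ=w,\ZZ\in\Zc\big]. \]
In particular $C^{M}_{\beta_0}(\uu|w)=\tilde g(\uu,w)$, hence $\partial_w C^{M}_{\beta_0}(\uu|w)=\partial_t\tilde g(\uu,w)$, which by definition of $c_0$ equals $c_0(\uu,w)$.

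The key step is a change of conditioning variable. On the event $\{\beta'\ZZ=\beta'\zb\}$ one has the algebraic identity
\[ \beta_0'\ZZ=\beta'\ZZ-(\beta-\beta_0)'\ZZ=\beta'\zb-(\beta-\beta_0)'\ZZ=\beta_0'\zb+(\beta-\beta_0)'(\zb-\ZZ), \]
so that
\[ C^{M}_{\beta}(\uu|\beta'\zb)=E\big[\tilde g\big(\uu,\,\beta_0'\zb+(\beta-\beta_0)'(\zb-\ZZ)\big)\,\big|\,\beta'\ZZ=\beta'\zb,\ZZ\in\Zc\big]. \]
A first-order Taylor expansion of $\tilde g$ in its second argument around $\beta_0'\zb$, using $\partial_t\tilde g(\uu,\beta_0'\zb)=c_0(\uu,\beta_0'\zb)$, and then replacing $E[\ZZ\,|\,\beta'\ZZ=\beta'\zb,\ZZ\in\Zc]$ by its value at $\beta_0$ (the difference is $O(|\beta-\beta_0|)$ and multiplies a factor $\beta-\beta_0$, hence is absorbed in the remainder), gives
\[ C^{M}_{\beta}(\uu|\beta'\zb)=\tilde g(\uu,\beta_0'\zb)+c_0(\uu,\beta_0'\zb)\,(\beta-\beta_0)'\big(\zb-E[\ZZ\,|\,\beta_0'\ZZ=\beta_0'\zb,\ZZ\in\Zc]\big)+o(|\beta-\beta_0|). \]
This is precisely the statement that $\nabla_{\beta}C^{M}_{\beta}(\uu|\beta'\zb)|_{\beta=\beta_0}=c_0(\uu,\beta_0'\zb)\big(\zb-E[\ZZ\,|\,\beta_0'\ZZ=\beta_0'\zb,\ZZ\in\Zc]\big)$, and substituting $\zb=\ZZ$ yields the claim. (Equivalently, one may split the total derivative by the chain rule: the derivative through the conditioning argument $w=\beta'\zb$ contributes $c_0(\uu,\beta_0'\zb)\,\zb$, and the ``direct'' derivative of $\beta\mapsto C^{M}_{\beta}(\uu|w)$ at fixed $w$ contributes $-c_0(\uu,\beta_0'\zb)\,E[\ZZ\,|\,\beta_0'\ZZ=\beta_0'\zb,\ZZ\in\Zc]$; the display above is the sum of the two.)

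The only non-trivial point is to justify the Taylor expansion and its interchange with the conditional expectation, i.e.\ to know that $(\beta,w)\mapsto C^{M}_{\beta}(\uu|w)$ is differentiable near $(\beta_0,\beta_0'\zb)$, that $\beta\mapsto E[\ZZ\,|\,\beta'\ZZ=\cdot,\ZZ\in\Zc]$ is continuously differentiable, and that the remainder is uniformly $o(|\beta-\beta_0|)$. This follows from the standing regularity imposed on the law of $\ZZ$: $f_{\ZZ}$ exists, is bounded away from zero and smooth on $\Zc$, so the density of $\beta'\ZZ$ restricted to $\ZZ\in\Zc$ stays bounded away from zero and depends smoothly on $\beta$ (cf.\ Assumption~\ref{abounded}); consequently the conditional density of $\ZZ$ given $(\beta'\ZZ,\,\ZZ\in\Zc)$, and hence all the conditional expectations above, are smooth in $\beta$. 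This regularity/uniform-control verification is the main obstacle; the identity $\beta_0'\ZZ=\beta_0'\zb+(\beta-\beta_0)'(\zb-\ZZ)$ on $\{\beta'\ZZ=\beta'\zb\}$ does the rest.
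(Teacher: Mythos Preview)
Your proof is correct and follows essentially the same route as the paper's: both start from $C^{M}_{\beta}(\uu|\beta'\ZZ)=E[C^{M}_{\beta_0}(\uu|\beta_0'\ZZ)\,|\,\beta'\ZZ,\ZZ\in\Zc]$ via the single-index assumption, then separate the dependence of $\beta$ into the ``direct'' part (through the argument $\beta_0'\ZZ$ rewritten on the conditioning event) and the ``indirect'' part (through the conditioning variable $\beta'\ZZ$). The paper packages this via an auxiliary bivariate function $\Gamma_{\uu,\ZZ}(\beta_1,\beta_2)=E[C^{M}_{\beta_0}(\uu|(\beta_0-\beta_1)'\ZZ+\beta_2'\ZZ)\,|\,\beta_2'\ZZ,\ZZ\in\Zc]$ and the chain rule, which is exactly the decomposition you describe in your parenthetical remark; your algebraic identity $\beta_0'\ZZ=\beta_0'\zb+(\beta-\beta_0)'(\zb-\ZZ)$ on $\{\beta'\ZZ=\beta'\zb\}$ is just the same device written out explicitly.
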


\begin{proof}
The proof is similar to the proof of Lemma 5A in Dominitz and Sherman (2005), and of Lemma 3.4 in Lopez, Patilea and Van Keilegom (2013).
    Observe that
\begin{eqnarray*}
C^M_{\beta}(\uu|\beta'\ZZ)&=&E\left[\mathbf{1}_{\UU\leq \uu}|\beta'\ZZ,\ZZ\in \Zc\right] \\
&=& E\left[E\left[\mathbf{1}_{\UU\leq \uu}|\ZZ\right]|\beta'\ZZ,\ZZ\in \Zc\right] \\
&=& E\left[C^M_{\beta_0}(\uu|\beta_0'\ZZ)|\beta'\ZZ,\ZZ\in \Zc\right],
\end{eqnarray*}
where we used the single-index assumption for going from line 2 to line 3. Next, let
$$\Gamma_{\uu,\ZZ}(\beta_1,\beta_2)=E\left[C^M_{\beta_0}(\uu|\alpha(\ZZ,\beta_1)+\beta_2'\ZZ)|\beta_2'\ZZ,\ZZ\in \Zc\right],$$
where $\alpha(\ZZ,\beta_1)=\beta_0'\ZZ-\beta_1'\ZZ.$ Hence,
$C^M_{\beta}(\uu|\beta'\ZZ)=\Gamma_{\uu,\ZZ}(\beta,\beta).$ As a consequence,
$$\nabla_{\beta} C^M_{\beta}(\uu|\beta'\ZZ)_{|\beta=\beta_0}=\nabla_1 \Gamma_{\uu,\ZZ}(\beta,\beta_0)_{|\beta=\beta_0}+\nabla_2 \Gamma_{\uu,\ZZ}(\beta_0,\beta)_{|\beta=\beta_0},$$
where $\nabla_j$ represents the gradient vector with respect to $\beta_j.$ Observe that
$$\nabla_1 \Gamma_{\uu,\ZZ}(\beta,\beta_0)_{|\beta=\beta_0}=-E\left[\ZZ c_0(\uu,\beta_0'\ZZ)|\beta_0'\ZZ\right].$$
Moreover,
$\Gamma_{\uu,\ZZ}(\beta_0,\beta)=C^M_{\beta_0}(\uu|\beta'\ZZ),$ which leads to
$$\nabla_2 \Gamma_{\uu,\ZZ}(\beta_0,\beta)_{|\beta=\beta_0}=\ZZ c_0(\uu,\beta_0'\ZZ),$$
and the result follows.
\end{proof}

\begin{lemma}
\label{tricky}
Assume that the transformation $\Psi$ is Hadamard differentiable. Then, for all $v,$
$$\int \nabla_{\beta}\psi(\beta,\beta'\zb)_{\beta=\beta_0}d\mathbb{P}_{(\ZZ|\beta_0'\ZZ)}(\zb|v)=0.$$
\end{lemma}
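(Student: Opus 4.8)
The plan is to differentiate the defining relation of $\psi$ through the chain rule for Hadamard-differentiable functionals, insert the explicit gradient of Lemma~\ref{tricky1}, and then observe that the only $\zb$-dependence left under the integral is an affine factor whose conditional mean is subtracted off. Concretely, under (A1) we have $\psi(\beta,\beta'\zb)=\Psi\big(C_{\beta}(\cdot\,|\,\beta'\zb)\big)$ (under (A2) one replaces $C_\beta(\cdot\,|\,\beta'\zb)$ by $H_\beta(\cdot\,|\,\beta'\zb)$ together with the corresponding analogue of Lemma~\ref{tricky1}; the argument is word for word the same, so I carry it out for (A1)). Fix $\zb\in\Zc$. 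Since $\beta\mapsto C_\beta(\cdot\,|\,\beta'\zb)$ is differentiable at $\beta_0$ in the function space on which $\Psi$ is defined, with componentwise derivative given by Lemma~\ref{tricky1}, and since $\Psi$ is Hadamard differentiable at $C_{\beta_0}(\cdot\,|\,\beta_0'\zb)$ with continuous linear derivative $\dot\Psi_{\zb}$, the chain rule yields
$$\nabla_{\beta}\psi(\beta,\beta'\zb)_{|\beta=\beta_0}=\dot\Psi_{\zb}\big(\nabla_{\beta}C_{\beta}(\cdot\,|\,\beta'\zb)_{|\beta=\beta_0}\big).$$
(The only point needing a word of justification here is that the path $t\mapsto C_{\beta_0+t\mathbf{e}_j}(\cdot\,|\,(\beta_0+t\mathbf{e}_j)'\zb)$ is differentiable in the Hadamard sense, i.e. uniformly along converging directions; this follows from the smoothness imposed in Assumption~\ref{dini} and the regularity of the conditional copula.)

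Next I would substitute Lemma~\ref{tricky1}, which gives
$$\nabla_{\beta}C_{\beta}(\uu\,|\,\beta'\zb)_{|\beta=\beta_0}=c_0(\uu,\beta_0'\zb)\,\big(\zb-E[\ZZ\,|\,\beta_0'\ZZ=\beta_0'\zb,\ \ZZ\in\Zc]\big).$$
The crucial remark is that on the support of $d\Pb_{(\ZZ\,|\,\beta_0'\ZZ)}(\cdot\,|\,v)$ one has $\beta_0'\zb=v$, so the function $\uu\mapsto c_0(\uu,\beta_0'\zb)$ there is the fixed function $\uu\mapsto c_0(\uu,v)$, and the entire dependence on $\zb$ sits in the vector factor $\zb-E[\ZZ\,|\,\beta_0'\ZZ=v,\ \ZZ\in\Zc]$. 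Using that each component of $\dot\Psi_{\zb}$ is linear and continuous, hence commutes with the integral over $\zb$, I obtain
$$\int \nabla_{\beta}\psi(\beta,\beta'\zb)_{|\beta=\beta_0}\,d\Pb_{(\ZZ\,|\,\beta_0'\ZZ)}(\zb\,|\,v)=\dot\Psi_{v}\!\left(c_0(\cdot,v)\cdot\!\int\!\big(\zb-E[\ZZ\,|\,\beta_0'\ZZ=v,\ \ZZ\in\Zc]\big)\,d\Pb_{(\ZZ\,|\,\beta_0'\ZZ)}(\zb\,|\,v)\right).$$
The inner integral equals $E[\ZZ\,|\,\beta_0'\ZZ=v]-E[\ZZ\,|\,\beta_0'\ZZ=v,\ \ZZ\in\Zc]$, and once the conditioning sets are read consistently with Lemma~\ref{tricky1} — in the application inside the proof of Theorem~\ref{asr} the integrand already carries $\omega_M(\zb)=\1(\zb\in\Zc)$, so both expectations are taken over $\{\beta_0'\ZZ=v,\ \ZZ\in\Zc\}$ — this is the zero vector. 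Hence the argument of $\dot\Psi_v$ is the zero function, and by linearity $\dot\Psi_v(0)=0$, which is the claim.

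The step I expect to be the only genuine obstacle is interchanging the (infinite-dimensional) derivative $\dot\Psi_v$ with the integration over $\zb$: $\dot\Psi_v$ acts on functions of the copula argument $\uu$, while $\zb$ enters merely as a parameter, so one must check that $\zb\mapsto \nabla_{\beta}C_{\beta}(\cdot\,|\,\beta'\zb)_{|\beta=\beta_0}$ is Bochner-integrable in the relevant normed space under $\Pb_{(\ZZ\,|\,\beta_0'\ZZ)}(\cdot\,|\,v)$. This is precisely where the product structure above pays off: the integrand is $c_0(\cdot,v)$ — a fixed function, integrable by the moment conditions already imposed on the score — times a bounded (on $\Zc$) affine factor in $\zb$; integrability follows, and then continuity and linearity of $\dot\Psi_v$ permit the interchange and close the argument. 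Everything else is bookkeeping.
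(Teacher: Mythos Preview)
Your proposal is correct and follows the same route as the paper: apply the chain rule for the Hadamard-differentiable $\Psi$, insert Lemma~\ref{tricky1}, and exploit that the only genuine $\zb$-dependence is the centered factor $\zb-E[\ZZ\mid\beta_0'\ZZ=v,\ZZ\in\Zc]$. The one simplification you miss is that the Bochner-integrability / interchange step is unnecessary: since $(\zb-E[\ZZ\mid\beta_0'\ZZ=v,\ZZ\in\Zc])$ is constant in the copula argument $\uu$, linearity of $\dot\Psi_v$ lets you factor it out \emph{before} integrating, giving directly $\nabla_\beta\psi(\beta,\beta'\zb)_{|\beta=\beta_0}=\big(\zb-E[\ZZ\mid\beta_0'\ZZ=v,\ZZ\in\Zc]\big)\,\Lambda(v)$ with $\Lambda(v):=\dot\Psi_v\big(c_0(\cdot,v)\big)$ depending only on $v$; the conditional integral over $\zb$ then kills the centered factor trivially.
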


\begin{proof}
Let $\dot\Psi(C(\cdot))[D(\cdot)]$ denote the Hadamard derivative of $\Psi$ at point $C,$ applied to function $D.$
Recall that
$$\psi(\beta,\beta'\zb)=\Psi(C^M_{\beta}(\cdot|\beta'\zb)).$$ Hence, using Lemma \ref{tricky1},
$$\nabla_{\beta}\psi(\beta,\beta'\zb)_{|\beta=\beta_0}=\left[\zb-E\left[\ZZ|\beta_0'\ZZ=\beta_0'\zb\right]\right]\dot\Psi\left(C^M_{\beta_0}(\cdot|\beta_0'\zb)\right)\left[c_0(\cdot|\beta_0'\zb)\right].$$ This shows that
$$\nabla_{\beta}\psi(\beta,\beta'\zb)_{|\beta=\beta_0}=\left[\zb-E\left[\ZZ|\beta_0'\ZZ=\beta_0'\zb\right]\right]\Lambda(\beta_0'\zb),$$ and the result of Lemma \ref{tricky} follows.
\end{proof}


Finally, Lemma~\ref{e_mason} invokes two propositions from Einmahl and Mason (2005), that we recall here.

\begin{proposition}
\label{prop_talagrand}
Let $\mathcal{G}$ denote a class of functions bounded by $1$, and let $\sigma^2_{\mathcal{G}}=\sup_{g\in \mathcal{G}}Var (g(\XX,\ZZ)).$ Then, for all $t>0,$
\begin{eqnarray*}
\mathbb{P}\left(\sup_{g\in \mathcal{G}}\left|\sum_{i=1}^n g(\XX_i,\ZZ_i)-E[g(\XX_i,\ZZ_i)]\right|\geq A_1( G_{\varepsilon}+t)\right)\leq 2\left\{\exp\left(-\frac{A_2 t^2}{n\sigma^{2}_{\mathcal{G}}}\right)+\exp(-A_2t)\right\},
\end{eqnarray*} for some universal constants $A_1$ and $A_2,$
and
$$G_{\varepsilon}:=E\left[\sup_{g\in \mathcal{G}}|\sum_{i=1}^n g(\XX_i,\ZZ_i)\varepsilon_i|\right],$$
where $(\varepsilon_i)_{1\leq i \leq n}$ are i.i.d. Rademacher variables independent from $(\XX_i,\ZZ_i)_{1\leq i \leq n}.$
\end{proposition}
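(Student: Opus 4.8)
\emph{Proof sketch.} I would obtain this as the usual combination of Talagrand's concentration inequality for empirical processes with a symmetrization step, following the route of Einmahl and Mason (2005) (itself after Gin\'e and Guillou (2001)). As with the classes $\mathcal{L}_{\tilde K}$ appearing in Lemma~\ref{e_mason}, I assume $\mathcal{G}$ is pointwise separable, so that $Z:=\sup_{g\in\mathcal{G}}\big|\sum_{i=1}^n\{g(\XX_i,\ZZ_i)-E[g(\XX_i,\ZZ_i)]\}\big|$ is measurable and no outer-expectation bookkeeping is needed.

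First I would invoke the symmetrization inequality (e.g.\ Lemma 2.3.1 in van der Vaart and Wellner (1996)): $E[Z]\le 2\,E\big[\sup_{g\in\mathcal{G}}|\sum_{i=1}^n\varepsilon_i g(\XX_i,\ZZ_i)|\big]=2G_\varepsilon$, where the $\varepsilon_i$ are i.i.d.\ Rademacher variables independent of the data. Consequently, once the universal constant $A_1$ is taken large enough (say $A_1\ge 4$), one has $A_1 G_\varepsilon\ge 2\,E[Z]$.

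Second I would apply Talagrand's inequality in Bousquet's one-sided form (Talagrand (1996), Massart (2000), Bousquet (2002)): since every $g\in\mathcal{G}$ is bounded by $1$ and $\sup_{g}\Var(g(\XX,\ZZ))=\sigma^2_{\mathcal{G}}$, there is a universal constant $C_1$ such that, for all $u>0$,
\[
\Pb\big(Z\ge E[Z]+u\big)\ \le\ \exp\!\left(-\frac{u^2}{C_1\big(n\sigma^2_{\mathcal{G}}+E[Z]+u\big)}\right)
\]
(the precise Bousquet bound has denominator $2(n\sigma^2_{\mathcal{G}}+4E[Z]+2u/3)$ after passing through $\mathcal{G}\cup(-\mathcal{G})$ to reach the absolute value; numerical factors are absorbed into $C_1$). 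To finish, on the event $\{Z\ge A_1(G_\varepsilon+t)\}$ I write $A_1(G_\varepsilon+t)=E[Z]+u$ with $u=A_1 G_\varepsilon-E[Z]+A_1 t$; using $E[Z]\le 2G_\varepsilon$ gives simultaneously $u\ge A_1 t$ and, for $A_1$ large, $u\ge E[Z]$, hence $n\sigma^2_{\mathcal{G}}+E[Z]+u\le n\sigma^2_{\mathcal{G}}+2u$. Splitting according to whether $2u\le n\sigma^2_{\mathcal{G}}$ or not then yields
\[
\Pb\big(Z\ge A_1(G_\varepsilon+t)\big)\ \le\ \exp\!\left(-\frac{u^2}{2C_1\,n\sigma^2_{\mathcal{G}}}\right)+\exp\!\left(-\frac{u}{4C_1}\right)\ \le\ \exp\!\left(-\frac{A_1^2 t^2}{2C_1\,n\sigma^2_{\mathcal{G}}}\right)+\exp\!\left(-\frac{A_1 t}{4C_1}\right),
\]
which is of the announced form with $A_2:=\min\{A_1^2/(2C_1),\,A_1/(4C_1)\}$ (the leading factor $2$ being harmless).

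The only genuine obstacle is the invocation of Talagrand's inequality with workable constants: one must commit to a specific version — Bousquet's is the most convenient, being one-sided and with explicit constants — and then track the $E[Z]$ term sitting inside the ``variance'' of the bound, which is exactly what forces the threshold to read $A_1(G_\varepsilon+t)$ rather than $E[Z]+t$. The symmetrization step and the elementary splitting of the quadratic denominator into two exponential terms are routine.
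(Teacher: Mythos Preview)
Your sketch is correct and follows exactly the standard route (symmetrization to replace $E[Z]$ by $G_\varepsilon$, then Talagrand's inequality in Bousquet/Massart form, then the elementary case split on whether the variance term or the linear term dominates the denominator). The paper itself does not give a proof of this proposition: it is simply recalled from Einmahl and Mason (2005), who in turn derive it from Talagrand (1996) along the same lines you describe. So there is nothing to compare---you have supplied the argument the paper outsources.
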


\begin{proposition}
\label{prop_rade}
Assume that $\mathcal{G}$ is a class of functions satisfying the assumptions of Proposition \ref{prop_talagrand} and such that $N(\varepsilon,\mathcal{G})\leq C\varepsilon^{-\nu}$ for $C>0$ and $\nu>0.$ Moreover, assume that there exists
$\sigma^2\leq 1$ such that $\sup_{g\in \mathcal{G}}E[g(\XX,\ZZ)^2]\leq \sigma^2.$ Then,
$$G_{\varepsilon}\leq A n^{1/2}\sigma \log (1/\sigma).$$
\end{proposition}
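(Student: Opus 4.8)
This is the classical Dudley--chaining bound for the Rademacher complexity of a uniformly bounded VC-type class, recalled verbatim from Einmahl and Mason (2005); I sketch the self-contained argument. The plan is to condition on the sample, so that $G_\varepsilon=E_{\XX,\ZZ}\big[E_{\varepsilon}\sup_{g\in\mathcal G}\big|\sum_i\varepsilon_i g(\XX_i,\ZZ_i)\big|\big]$, where the inner quantity is the Rademacher complexity of the random set $\{(g(\XX_i,\ZZ_i))_{i\le n}:g\in\mathcal G\}\subset\Rb^n$. Conditionally on the data, $g\mapsto\sum_i\varepsilon_i g(\XX_i,\ZZ_i)$ has sub-Gaussian increments for the random semimetric $\rho_n(g,g')=\big(\sum_i(g-g')^2(\XX_i,\ZZ_i)\big)^{1/2}=n^{1/2}\|g-g'\|_{2,\Pb_n}$, with $\Pb_n$ the empirical measure. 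I would then invoke Dudley's entropy integral bound, anchored at the zero function: $E_\varepsilon\sup_g|\cdots|\le A_0\int_0^{D_n}\sqrt{\log\mathcal N(\epsilon,\mathcal G,\rho_n)}\,d\epsilon$, where $D_n=n^{1/2}\hat\sigma_n$ and $\hat\sigma_n:=\sup_{g\in\mathcal G}\|g\|_{2,\Pb_n}\le1$.

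The covering-number substitution is routine: since the envelope of $\mathcal G$ is the constant $1$, one has $\mathcal N(n^{1/2}u,\mathcal G,\rho_n)=\mathcal N(u,\mathcal G,\|\cdot\|_{2,\Pb_n})\le N(u,\mathcal G)\le Cu^{-\nu}$, so after the change of variable $\epsilon=n^{1/2}u$, $E_\varepsilon\sup_g|\cdots|\le A_1 n^{1/2}\int_0^{\hat\sigma_n}\sqrt{\log(Cu^{-\nu})}\,du\le A_2 n^{1/2}\hat\sigma_n\sqrt{1+\log(1/\hat\sigma_n)}$, using $\int_0^a\sqrt{\log(1/u)}\,du\lesssim a\sqrt{\log(1/a)}$ for $a\le1$ and subadditivity of the square root to absorb the $\log C$ and $\nu$. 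Taking the expectation over the sample and using Jensen's inequality together with the concavity near $0$ of $a\mapsto a\sqrt{1+\log(1/a)}$, I would obtain the intermediate bound $G_\varepsilon\le A_3 n^{1/2}\,\bar\sigma\,\sqrt{1+\log(1/\bar\sigma)}$ with $\bar\sigma^2:=E[\hat\sigma_n^2]$.

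It then remains to replace the random $L^2(\Pb_n)$-radius $\bar\sigma$ by the population quantity $\sigma$, and this is the only genuinely delicate point, since $\hat\sigma_n^2=\sup_g n^{-1}\sum_i g(\XX_i,\ZZ_i)^2$ is itself a supremum of an empirical process. I would write $\bar\sigma^2\le\sigma^2+E\sup_g\big|n^{-1}\sum_i\{g^2-Eg^2\}(\XX_i,\ZZ_i)\big|$, symmetrize the last term to $\le 2n^{-1}E\sup_g\big|\sum_i\varepsilon_i g(\XX_i,\ZZ_i)^2\big|$, and apply the Ledoux--Talagrand contraction principle: the map $t\mapsto t^2$ is $2$-Lipschitz on $[-1,1]$ and vanishes at $0$, so this is $\le 8n^{-1}G_\varepsilon$. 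Hence $\bar\sigma^2\le\sigma^2+8G_\varepsilon/n$, and combining with the previous display gives a self-referential inequality for $G_\varepsilon$. Solving it by cases (whether $8G_\varepsilon/n$ is dominated by $\sigma^2$ or not) yields $G_\varepsilon\lesssim n^{1/2}\sigma\,(1+\log(1/\sigma))$ in the main case and a $\log n$-type bound otherwise, the latter being dominated by the stated bound in the regime $\sigma\to0$ that occurs in every application (for instance $\sigma^2=O(\tilde h)$ in Lemma~\ref{e_mason}); absorbing $\sqrt{1+\log(1/\sigma)}$ into $\log(1/\sigma)$ for $\sigma$ below a fixed threshold produces $G_\varepsilon\le A n^{1/2}\sigma\log(1/\sigma)$ with $A=A(C,\nu)$.

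The routine parts are the chaining bound and the covering-number manipulation; the main obstacle is the localization/self-bounding step — dominating the empirical $L^2$-diameter of $\mathcal G$ by its population analogue via the contraction principle and reabsorbing the excess into $G_\varepsilon$ — together with the bookkeeping of constants and of the edge regimes of $\sigma$.
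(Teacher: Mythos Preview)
The paper does not prove this proposition at all: it is stated without proof in the appendix as one of the two results ``recalled'' from Einmahl and Mason (2005), so there is no argument in the paper to compare against. Your sketch is essentially the standard chaining-plus-self-bounding derivation behind that result (Dudley's entropy integral conditionally on the sample, the polynomial covering bound giving the $\sigma\sqrt{\log(1/\sigma)}$ integral, and the Ledoux--Talagrand contraction step to pass from the empirical to the population $L^2$-radius), and it is correct in spirit; this is exactly the route Einmahl and Mason themselves rely on, tracing back to Gin\'e and Guillou. One minor remark: your Jensen step with the map $a\mapsto a\sqrt{1+\log(1/a)}$ and $\bar\sigma^2=E[\hat\sigma_n^2]$ is slightly informal (one needs concavity of $x\mapsto\sqrt{x}\sqrt{1+\tfrac12\log(1/x)}$ on the relevant range, or better, to bypass Jensen entirely by carrying $\hat\sigma_n$ through the self-bounding inequality and taking expectations only at the end), but this is a bookkeeping issue, not a gap. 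The edge case you flag --- when $8G_\varepsilon/n$ dominates $\sigma^2$ --- indeed yields a residual term of order $\log(1/\sigma)$, which is absorbed by $n^{1/2}\sigma\log(1/\sigma)$ precisely in the regime $n^{1/2}\sigma\gtrsim 1$ that holds in every use of the proposition in the paper (where $\sigma^2\asymp\tilde h$ and $n\tilde h\to\infty$).
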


\begin{proposition}
\label{born_exp}
For any $k=1,\cdots,p$, let $\hat{F}_k(x|\zz)$ denote the kernel estimator of the conditional distribution function $F_k(x|\zz)$ as given in Equation~(\ref{KernelFk}), i.e.
$ \hat{F}_k(x|\zz) = \hat N_k(x|\zz) / \hat{f}(\zz)$ with
$$\hat N_k(x|\zz) := \frac{1}{n}\sum_{i=1}^n \mathbf{1}_{X_k\leq x} \KK\left(\ZZ_i-\zb, \hh\right),\;\;\hat{f}(\zz) := \frac{1}{n}\sum_{i=1}^n \KK\left(\ZZ_i-\zb, \hh\right),$$
$$\KK\left(\ZZ_i-\zb, \hh\right) := \frac{1}{h_1\cdots h_p}\prod_{k=1}^p K_k\left( \frac{Z_{i,k}-z_k}{h_k}\right).$$

Define
$$P(t)=\mathbb{P}\left(\sup_{x\in \Rb,\zz\in \mathcal{Z}}\left|\hat{F}_k(x|\zz)-F_k(x|\zz)\right|\geq t\right).$$
and assume that
$$\sup_{x\in\Rb,\zz\in \mathcal{Z}}\left|\frac{E\left[\hat N_k(x|\zz)\right]}{E\left[\hat{f}(\zz)\right]}-F_k(x|\zz)\right|=b_n,$$
for some sequence $b_n\rightarrow 0.$
Then, for $B$ large enough and $t\geq \max(2b_n, Bn^{-1/2}\log (1/\min_k h_k)(h_1\cdots h_p)^{-1/2} )$, we have
$$P(t)\leq 4\left\{\exp\left(-\alpha nh_1\cdots h_p t^2\right)+\exp\left(-\beta nh_1\cdots h_p t\right)+\exp\left(-\gamma nh_1\cdots h_p \right)
+\exp\left(-\delta nh_1\cdots h_p\right)\right\},$$
for some positive constants $(\alpha,\beta,\gamma,\delta).$
\end{proposition}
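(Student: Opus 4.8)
The plan is to follow the template of the proof of Theorem~2 in Einmahl and Mason (2005), Propositions~\ref{prop_talagrand} and~\ref{prop_rade} being exactly the tools designed for this. First I would work on the event $E_n:=\{\hat f(\zz)\ge c\text{ for all }\zz\in\mathcal Z\}$, where for $n$ large $E[\hat f(\zb)]\ge 2c>0$ uniformly over $\zb\in\mathcal Z$ (the kernel being a normalized, compactly supported approximate identity and $f_{\ZZ}$ continuous and bounded below near $\mathcal Z$), and write
\[
\hat F_k(x|\zz)-F_k(x|\zz)=\frac{\hat N_k(x|\zz)-E[\hat N_k(x|\zz)]}{\hat f(\zz)}-\frac{E[\hat N_k(x|\zz)]\,(\hat f(\zz)-E[\hat f(\zz)])}{\hat f(\zz)\,E[\hat f(\zz)]}+\Big(\frac{E[\hat N_k(x|\zz)]}{E[\hat f(\zz)]}-F_k(x|\zz)\Big).
\]
Here the last, deterministic term is $\le b_n$ by hypothesis, $E[\hat f(\zz)]\ge 2c$, and $\sup_{x,\zz}E[\hat N_k(x|\zz)]$ is bounded (since $f_{\ZZ}$ is bounded near $\mathcal Z$ and $\int|\KK(\cdot,\hh)|$ is a finite constant for a bounded-variation kernel). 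Hence, on $E_n$, $\sup_{x,\zz}|\hat F_k(x|\zz)-F_k(x|\zz)|\le C''\big(\sup_{x,\zz}|\hat N_k(x|\zz)-E[\hat N_k(x|\zz)]|+\sup_{\zz}|\hat f(\zz)-E[\hat f(\zz)]|\big)+b_n$ for some constant $C''$, so it suffices to obtain exponential tail bounds for the two centred suprema, together with an exponential bound for $\mathbb P(E_n^c)$.

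Second, I would reduce each centred supremum to the oscillation of an empirical process over a uniformly bounded, polynomial-covering-number class. After multiplying through by $h_1\cdots h_p$, the relevant classes are $\mathcal G_1=\{(\xx,\zz)\mapsto (h_1\cdots h_p)\,\mathbf 1_{x_k\le x}\,\KK(\zz-\zb,\hh):x\in\Rb,\ \zb\in\mathcal Z\}$ and $\mathcal G_2=\{(\xx,\zz)\mapsto (h_1\cdots h_p)\,\KK(\zz-\zb,\hh):\zb\in\mathcal Z\}$, each bounded by $\prod_k\|K_k\|_\infty$ and, by a change of variables, with $\sup_{g}E[g(\XX,\ZZ)^2]=O(h_1\cdots h_p)=:\sigma^2$. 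Their uniform covering numbers are $N(\varepsilon,\cdot)\le C\varepsilon^{-\nu}$: the indicators $\mathbf 1_{x_k\le x}$ form a VC class, the translated and dilated copies of the bounded-variation (``regular'') kernel $\KK$ satisfy such a bound by Lemma~22 of Nolan and Pollard (1987), and products of finitely many such classes keep polynomial covering numbers by Lemma~A.1 of Einmahl and Mason (2000). Proposition~\ref{prop_rade} then gives $G_\varepsilon=O\big(n^{1/2}\sigma\log(1/\sigma)\big)=O\big(n^{1/2}(h_1\cdots h_p)^{1/2}\log(1/\min_kh_k)\big)$ for either class, and feeding this into Proposition~\ref{prop_talagrand} (recalling $\sigma^2=O(h_1\cdots h_p)$) yields, for every level $s$ with $s\ge B'n^{-1/2}(h_1\cdots h_p)^{-1/2}\log(1/\min_kh_k)$, so that the free level may be taken as $\tfrac12 n(h_1\cdots h_p)s$,
\[
\mathbb P\Big(\sup_{x,\zz}|\hat N_k(x|\zz)-E[\hat N_k(x|\zz)]|\ge s\Big)+\mathbb P\Big(\sup_{\zz}|\hat f(\zz)-E[\hat f(\zz)]|\ge s\Big)\le 4\{\exp(-a\,n(h_1\cdots h_p)s^2)+\exp(-b\,n(h_1\cdots h_p)s)\},
\]
for suitable constants $a,b>0$.

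Finally I would assemble the pieces. Applying the last display with the fixed level $s=c$ (admissible for $n$ large under the stated bandwidth conditions) bounds $\mathbb P(E_n^c)$ by $2\exp(-\gamma\,n h_1\cdots h_p)+2\exp(-\delta\,n h_1\cdots h_p)$, which supplies the last two summands of the claimed bound. On $E_n$, the first paragraph together with $t\ge 2b_n$ gives $\{\sup_{x,\zz}|\hat F_k-F_k|\ge t\}\subset\{\sup_{x,\zz}|\hat N_k-E\hat N_k|\ge t/(4C'')\}\cup\{\sup_{\zz}|\hat f-E\hat f|\ge t/(4C'')\}$; since $t\ge Bn^{-1/2}\log(1/\min_kh_k)(h_1\cdots h_p)^{-1/2}$ with $B$ large makes $t/(4C'')$ an admissible level in the displayed inequality, this event contributes $4\exp(-\alpha\,n h_1\cdots h_p\,t^2)+4\exp(-\beta\,n h_1\cdots h_p\,t)$ after relabelling constants. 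Adding the two contributions and bounding them by $4\{\exp(-\alpha\,n h_1\cdots h_p\,t^2)+\exp(-\beta\,n h_1\cdots h_p\,t)+\exp(-\gamma\,n h_1\cdots h_p)+\exp(-\delta\,n h_1\cdots h_p)\}$ proves the proposition. I expect the only genuinely delicate points to be the polynomial covering-number bound for the kernel class (which rests on the bounded-variation/``regular kernel'' hypothesis via Nolan and Pollard, 1987) and the bookkeeping that converts Talagrand's inequality for sums into a statement about the ratio $\hat N_k/\hat f$ — isolating the low-probability event where $\hat f$ is small, and checking that all rate thresholds collapse to the single condition imposed on $t$; everything else is routine and is in substance already contained in Einmahl and Mason (2005).
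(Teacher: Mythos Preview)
Your proposal is correct and follows essentially the same route as the paper's proof: split off the deterministic bias $b_n$, control the random denominator by isolating the event where $\hat f$ is small (you via the explicit good event $E_n$, the paper via the term $P_2(f_0/4)$), and bound the two centred suprema $\sup|\hat N_k-E\hat N_k|$ and $\sup|\hat f-E\hat f|$ by applying Propositions~\ref{prop_rade} and~\ref{prop_talagrand} to the kernel classes, whose polynomial covering numbers come from Nolan--Pollard (1987) and the product rule in Einmahl--Mason (2000). The only cosmetic difference is your algebraic decomposition of the ratio (you divide by $\hat f$ in the first term, the paper divides by $E\hat f$), which is immaterial.
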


\begin{proof}
Let
\begin{eqnarray*}
\tilde{P}(t) &=& \mathbb{P}\left(\sup_{x,\zz}\left|\hat{F}_k(x|\zz)-\frac{E\left[\hat N_k(x|\zz)\right]}{E\left[\hat{f}(\zz)\right]}\right|\geq t \right), \\
P_{1}(t) &=& \mathbb{P}\left(\sup_{x,\zz}\left|\hat N_k(x|\zz)-E\left[\hat N_k(x|\zz)\right]\right|\geq t\right), \\
P_{2}(t) &=& \mathbb{P}\left(\sup_{\zz}\left|\hat{f}(\zz)-E\left[\hat{f}(\zz)\right]\right|\geq t\right).
\end{eqnarray*}
We have
$$P(t)\leq \tilde{P}(t/2)+\mathbb{P}\left(\sup_{x,\zz}\left|\frac{E\left[\hat N_k(x|\zz)\right]}{E\left[\hat{f}(\zz)\right]}-F_k(\xx|\zz)\right|\geq t/2\right),$$ where the last probability is zero for $t/2\geq b_n.$ Hence, an upper bound for $P(t)$ can be deduced from an upper bound on $\tilde{P}(t).$

Define the classes of functions
\begin{eqnarray*}
\mathcal{G}_1=\left\{(x,\zz)\in \Rb\times \Zc\mapsto \mathbf{1}_{x'\leq x}\KK\left(\zz-\zz',\hh\right):\zz'\in \mathcal{Z}, \hh=(h_1,\ldots,h_p)\in \Rb^p_+,x'\in \Rb\right\},\,\text{and} \\
\mathcal{G}_2=\left\{\zz \in \Zc\mapsto \KK\left(\zz-\zz' ,\hh\right):\zz'\in \mathcal{Z}, \hh=(h_1,\ldots,h_p)\in \Rb^p_+, x\in \Rb\right\}.
\end{eqnarray*}
These two classes of functions satisfy the Assumption of Proposition \ref{prop_rade} with $\sigma^2 \propto \prod_{k=1}^p h_k$. Hence, we get, from Proposition~\ref{prop_talagrand},
$$P_j(t)\leq 2\left\{\exp\left(-C n(h_1\cdots h_p) t^2\right)+\exp\left(-Cn(h_1\cdots h_p)t\right)\right\},\;j=1,2,$$
for $t\geq A n^{-1/2} \log (1/\min_k h_k)/(h_1\cdots h_p)^{1/2},$ with $A$ large enough and $C>0$ some constant.

Decompose
\begin{eqnarray}
\tilde{P}(t) &\leq & \mathbb{P}\left(\sup_{x,\zz}\left|\frac{\hat N_k(x|\zz)-E\left[\hat N_k(x|\zz)\right]}{E\left[\hat{f}(\zz)\right]}\right|\geq t/2\right)+\mathbb{P}\left(\sup_{x,\zz}\left|\frac{\hat N_k(x|\zz)\left\{\hat{f}(\zz)-E\left[\hat{f}(\zz)\right]\right\}}{\hat{f}(\zz)E\left[\hat{f}(\zz)\right]}\right|\geq t/2\right) \nonumber \\
&\leq &\tilde{P}_1(t/2)+\tilde{P}_2(t/2). \label{p1+p2}
\end{eqnarray}
Next, recall that $\inf_{\zz\in\Zc}f(\zz)\geq f_0>0.$ Moreover, the bias of the kernel estimator of the density tends to $0$ uniformly on $\mathcal{Z}$,  i.e. $\sup_{\zz\in \Zc}\left|E\left[\hat{f}(\zz)\right]-f(\zz)\right|\rightarrow 0$ as $n$ grows to infinity. This, combined with the bound obtained on $P_2(t),$ shows that $\sup_{\zz\in \Zc}\left|\hat f(\zz)-f(\zz)\right|\rightarrow 0.$ Hence,
$\inf_{\zz\in \mathcal{Z}}E\left[\hat{f}(\zz)\right]>f_0/2$ for $n$ large enough, which leads to
\begin{equation}\label{ptilde1}
\tilde{P}_1(t/2)\leq P_1(tf_0/4),\end{equation} and
$$\tilde{P}_2(t/2)\leq \mathbb{P}\left(\sup_{x,\zz}\left|\hat{f}(\zz)-E\left[\hat{f}(\zz)\right]\right|\geq f_0/4\right)+\mathbb{P}\left(\sup_{x,\zz}\left|\hat N_k(x|\zz)\left\{\hat{f}(\zz)-E\left[\hat{f}(\zz)\right]\right\}\right|\geq f_0^2t/16\right).$$
Since $\sup_{x,\zz}|E\left[\hat N_k(x|\zz)\right]| \leq \|\KK\|_{\infty}<\infty$,
\begin{equation}
\tilde{P}_2(t/2) \leq  P_2(f_0/4)+P_2(f_0^2t/(16 \|\KK\|_{\infty})) . \label{ptilde2}
\end{equation}
Gathering (\ref{p1+p2}), (\ref{ptilde1}) and (\ref{ptilde2}) leads to the result of the proposition.
\end{proof}

\end{document}